\documentclass[10pt,reqno]{amsart}

\usepackage{amscd,amssymb,amsmath,amsthm,mathrsfs,dsfont} 
\usepackage{calc}
\usepackage{a4wide}
\textheight=24cm
\topmargin=-1cm
\usepackage{graphicx}
\usepackage{color}
\usepackage[]{hyperref}
\usepackage{bbm}
\usepackage[lofdepth,lotdepth]{subfig}
\usepackage{tikz} 
\usetikzlibrary{positioning}
\topmargin=0.1in \textwidth5.8in \textheight7.7in
\usepackage[shortlabels]{enumitem}
\usepackage[backend=biber,style=ieee,giveninits=true,sorting=anyt]{biblatex}
\addbibresource{HenningBib2311a.bib}

\newcommand{\vardbtilde}[1]{\tilde{\raisebox{0pt}[0.85\height]{$\tilde{#1}$}}}
\newtheorem{thm}{Theorem}
\newtheorem{defn}{Definition}
\newtheorem{lemma}{Lemma}
\newtheorem{pro}{Proposition}
\newtheorem{rk}{Remark}

\newtheorem{assumption}{Assumption}

\emergencystretch=1em

 \hypersetup{
    colorlinks=true,
    linkcolor=blue,
    filecolor=blue,      
    urlcolor=blue,
    citecolor=blue,
}%ergänzt am 09.02.2023

\numberwithin{equation}{section} \setcounter{tocdepth}{1}

\DeclareRobustCommand*{\ora}{\overrightarrow}
\DeclareRobustCommand*{\ola}{\overleftarrow}

\def\R{\mathbb{R}}

\def\L{\Lambda}

\def\N{\mathbb{N}}
\def\Z{\mathbb{Z}}

\def \L {\Lambda}

%%%%%%%%%%%%%%%%%%%%%%%%%%%%%%

\begin{document}
\title[Height-offset variables on trees]{Height-offset variables and pinning at infinity for gradient Gibbs measures on trees}

\author{Florian Henning and Christof K\"ulske}
%\date{\today}
\address{Florian Henning \\ Department of Mathematics \& Computer Science,
TU Eindhoven,\,
5600MB Eindhoven,
The Netherlands.\\
\url{https://orcid.org/0000-0001-5384-2832}}
\email{F.B.Henning@TUE.nl}

\address{Christof\ K\"ulske\\ Fakult\"at f\"ur Mathematik,
Ruhr University Bochum, Postfach 102148,\,
44721 Bochum,
Germany.\\
\url{https://orcid.org/0000-0001-9975-8329}}
\email {Christof.Kuelske@ruhr-uni-bochum.de}

\begin{abstract} 
Height-offset variables (HOVs) provide a mechanism, known as "pinning at infinity," to lift 
gradient Gibbs measures (GGMs)—describing interface increments—to proper Gibbs measures that describe absolute heights. 
Starting from Sheffield's seminal framework, we study HOVs for nearest-neighbor integer-valued gradient models on regular trees, under broad classes of transfer operators requiring only finite second moments and without assuming convexity. 
We first establish the existence of HOVs as martingale limits, prove the infinite differentiability of their Lebesgue densities, and demonstrate exponential concentration for the associated pinned Gibbs measures. 
Next we uncover a fundamental trade-off,   
as the Gibbs measures arising by “pinning at infinity” paradoxically lose several desirable structural properties. 
We rigorously show that they lose tree-automorphism invariance, the tree-indexed Markov chain property, and extremality within the class of Gibbs measures. 
Our analysis relies on martingale theory, novel past- and future-tail decompositions, and infinite product representations for moment generating functions, and it applies to free GGMs, as well as to GGMs of height-period two. 

\end{abstract}
\maketitle

{\bf Mathematics Subject Classifications (2020).} 60K35 (primary);
82B41, 82B26 (secondary)

{\bf{Key words.}} Localization, delocalization, gradient model, height-offset variable, pinning, martingale

\hypersetup{linkcolor=black}
\tableofcontents
\hypersetup{linkcolor=blue}
\section{Introduction}
The notion of a gradient Gibbs measure (GGM) was introduced by Funaki and Spohn in \cite{FS97} for the Ginzburg-Landau interface model to classify the ergodic measures for the shape of an interface in space which is subject to a stochastic dynamics.
As opposed to Gibbs measures (GM), which describe the absolute heights of the interface, gradient Gibbs measures describe the heights of the interface \textit{modulo a joint shift} in the height-direction. This can be represented by the field of increments (\textit{gradients}) along the oriented edges of the respective graph. 

For background on gradient models, on various graphs, concerning various aspects of localization and delocalization properties,  see 
\cite{CoKuLN23,CKu15,DaHaPe23,EnKu08,Ve06,DeuGiIo00,La22,LaOt23,Mi11,Sa24,Se24,DeRo24,CKu15,DuHaLaRaRa22}.
In particular, \cite{DaHaPe23, EnKu08,DeuGiIo00, DeRo24, FS97, Sh05,LaOt23} specifically consider gradient models on the lattice $\Z^d$ or the lattice torus $(\Z/nZ)^d$. On the other hand, \cite{La22} considers planar graphs of maximal degree three which are invariant under the action of some distinguished lattice $\mathcal{L}$ and provides two distinct sufficient assumptions on the interaction that guarantee that there are no (spatially) $\mathcal{L}$-invariant proper Gibbs measures. \cite{Se24} studies the Ginzburg-Landau-model on locally finite connected graphs which are \textit{percolation-invariant} in that for some $p<1$ the $p$-Bernoulli bond percolation on the respective graph almost surely possesses a transient infinite cluster. Finally, for results on the tree see \cite{LaTo23,CoKuLN23}.

On the lattice $\Z^d$ the theory was significantly enhanced by Sheffield \cite{Sh05}. Amongst other results, for continuous spin variables Sheffield uniquely characterizes \textit{liftable}  
(or as he calls them \textit{smooth}) gradient phases by their average heights modulo 1 and their height-offset spectrum. 
Here, an infinite-volume gradient state is liftable, if it arises as the projection 
of a proper Gibbs measure to the gradient variables. The height-offset spectrum 
is defined as the distribution of the underlying \textit{height-offset variable} (HOV).  

Most importantly, a height-offset variable is by definition  a tail-measurable observable on the infinite-volume state space 
of total height configurations  $\Omega_0^V$ which enjoys covariance under a joint shift of the configuration in the height-direction (cf. Definition \ref{def: HeightOffset} below).
Height-offset variables make rigorous the idea of \textit{pinning a gradient configuration at infinity} 
by assigning to it an absolute height. 
They can be used to lift gradient Gibbs measures to proper Gibbs measures, which describe absolute heights at the vertices of the graph. 
% Remarkably, it turns out that this can be also possible in apparent delocalization regimes, if the respective gradient Gibbs measure is not extreme in the set of all gradient Gibbs measures.
%as we will explain in detail below.  
This may seem to be a contradiction at first, and there is in fact an interesting general theory behind, as we will see. 
To our knowledge the phenomenon was first shown to appear with a concrete example of a model on a tree, 
in the inspiring  paper \cite{LaTo23}, where Lammers and Toninelli discuss how to interpret the free gradient state of the 
graph homomorphism model (cf. also e.g., \cite{ChPeShTa21} for the same model on the lattice) on regular trees as a proper Gibbs measure, using the existence of a non-trivial height-offset variable. 

Their model is prototypical and very relevant, 
as it is based on symmetric nearest neighbors jumps, 
without parameter. 
However, it remained unclear, 
how far the phenomenon extends when 
one looks at general gradient interactions, 
assuming only sufficient summability. 
Are there always HOVs, and if so, what regularity properties do they have?  
What are the possible consequences, are there pinned measures, what is their structure? What can we say about extremality, Markov chain property, translation invariance? 
This begs for a theory for 
HOVs for more general gradient Gibbs measures on regular trees with more general classes of interactions. This is the motivation 
for our present work.  
We restrict ourselves to regular trees in this paper, leaving more general graphs as supporting spaces for the future.

We recall that a fixed gradient interaction on trees, may admit a large variety of consistent 
infinite-volume gradient Gibbs measures \cite{HKLR19}. 
We will focus in our study on two types 
of GGMs, which are consistent measures 
for the same fixed interaction described by
the transfer operator.

The most prominent and popular type of GGMs for which our results hold, are \textit{free states}, obtained with open boundary conditions. 
While all increments are independent under 
the measure, the liftability is non-trivial, 
and the corresponding lifted measure 
shows the lack of translation-invariance 
and extremality, as we will show. 

Moreover, we include in our analysis, at not too much more cost in the proofs, also the case of the more complex period-$2$ \textit{height-periodic} GGMs.  
These are GGMs for the same interaction, 
but they have an interesting dependence structure, 
in which the gradient variables are resampled conditional 
on a suitably chosen hidden 
Ising model, see Section \ref{subsec: results 2-height periodic}. 
% as their projections to infinite branches 
% of the tree are mixtures of random walks 
% in random environments.
This shows that 
our methods are not restricted to the free measures.

\vspace{5pt}
\paragraph{\textbf{Main results 1: Existence of HOVs and peculiarities of pinned measures}} 
In the present note we consider free gradient states (Theorem \ref{thm: Free measure}) and height-periodic measures (Theorem \ref{thm: 2periodic}) asking for the possibility of pinning at infinity. 
Specifically we will explain how to obtain in complete generality the
existence of height-offset variables for free GGMs and GGMs of period $2$. 
To show existence of HOVs, we prove that limits 
of empirical height sums over spheres 
(after an arbitrary auxiliary pinning at a given site) converge a.s. and in $L^2$ to a height-offset variable $H$. 
This works for the free gradient measure, see Theorem \hyperlink{res: existence free}{2a)}, but also for
height-period $2$ gradient measures, see Theorem \hyperlink{res: existence 2per}{3a)}, under an $L^2$-condition on the defining 
a-priori increment interaction along any edge described by \hyperlink{par: transfer operator}{the transfer operator} $Q$. 

For convenience of the reader, and to be on safe grounds to develop the theory, 
we include a detailed proof of 
how to pin at infinity on trees by means of an HOV and obtain a GM from the GGM, see Theorem \ref{thm: DLR}. 
From here we derive consequences in our paper, both from a structural probabilistic point of view, and 
from a quantitative analytical point of view. 
Namely, this pinned-at-infinity measure however, always has 
peculiar properties: 
It necessarily loses 
translation invariance (tree-automorphism invariance resp.), see Theorems \hyperlink{res: peculiarity 1 free}{2c1)} and \hyperlink{res: peculiarity 1 2per}{3c1)}.  
Note that in our paper we call equivalently a measure 
\textit{homogeneous} if it 
is invariant under all automorphisms of the 
tree. Recall that the translations of the tree 
when the tree is viewed as a group, form a subclass of the tree-automorphisms.

Moreover, we show that the pinned-at-infinity measure loses 
extremality in the simplex of all Gibbs measures, see Theorem \hyperlink{res: peculiarity 2 2per}{3c2)}, and in case of the free state even more the tree-indexed Markov chain property, see Theorem \hyperlink{res: peculiarity 2 free}{2c2)}.

How to interpret 
these results? Note that the pinned-at-infinity 
measure 
behaves much differently than a measure would behave
which is obtained by naively pinning the height at a given 
site. HOV-pinned Gibbs measures on trees are much more subtle objects. Note that while site-pinnings will 
also appear in the paper, they do so in an auxiliary way, in the course of proving existence of an HOV.  
Note that such a naively site-pinned measure trivially enjoys 
the tree-indexed Markov chain property away from the pinning site. 
In contrast we prove that this is never the case for the HOV-pinned 
measure. The HOVs in this paper will be constructed as a limit of spherical averages, 
for radii turning to infinity. Viewed in this light, the lack of homogeneity 
of the HOV-pinned measure which we will prove expresses the fact, that on the tree 
even \textit{after the large-sphere limit} the 
choice of the midpoint of the spheres is felt in the distribution. The lack of the tree-indexed Markov property (to be detailed below)
comes from the fact that the fluctuations of the HOV corresponding to the infinite past will necessarily 
be felt on the level of two-spin distributions at adjacent sites. This will become clear in the proofs, see Section \ref{sec: Proof}.

\medskip 
\paragraph{\textbf{Main results 2: Regularity properties of HOVs}} 

Having proved the existence of HOVs we turn to describe their distribution, and also to derive consequences 
for the marginal distribution of the height-variable of the pinned GGM at the origin of the tree. 

The distribution of the HOV 
can be analyzed nicely for the free measure on the level of the characteristic function (and the moment generating function where it exists). 
To do so, we exploit the recursive structure of the tree, from which 
we are able to derive 
Theorem \hyperlink{res: regularity density free}{2b1)} and \hyperlink{res: localization free}{2b2)} (as well as the corresponding Theorem \hyperlink{res: regularity density 2per}{3b1)} and \hyperlink{res: localization 2per}{3b2)}), via an infinite product of the corresponding quantities of $Q/\Vert Q\Vert_1$, see Lemma \ref{pro: singleEdge vs Tree}. 
% Without any further assumption on $Q$ beyond existence of second moments, we obtain exponential upper bounds on the Fourier-transform, which imply existence of an infinitely often differentiable Lebesgue-density of the 
% HOV, see Proposition \ref{pro: LebesgueCharacteristic}. 
% Assuming moreover that the moment generating function of the increment along any oriented edge has a positive radius of convergence $R>0$, we can deduce  
% the existence of the moment generating for the HOV, too, with enlarged radius 
% of convergence $R(d+1)$. 
% Moreover, we obtain a nice relation between the cumulants. 
% For period-2 
% GGMs the situation is a little more intricate, but we are able to obtain bounds instead of equalities:   
% For the characteristic function of the HOV again exponential upper bounds can be derived, which implies 
% that the density is infinitely-often differentiable. 
% The nice explicit relations for the cumulants we obtained 
% for the free measure are not valid anymore, but 
% still we get bounds which can be used to obtain tail-estimates.  
To round up our analysis, we present two specific cases of interactions, namely the SOS-model and the discrete Gaussian (cf. also e.g., \cite{BaPaRo24} for recent results on the lattice), see Section \ref{sec: applications}. 

The remainder of the paper is organized as follows: Following a short overview of the definitions and notions in Subsection \ref{subsec: def} we present all results regarding pinning at infinity of the free state summarized in Theorem \ref{thm: Free measure}. The corresponding results regarding gradient Gibbs measures of height-period $2$ are then given in an analogous way in Theorem \ref{thm: 2periodic}.
In Section \ref{sec: Proof} we provide the proofs of each of the statements of Theorems \ref{thm: Free measure} and \ref{thm: 2periodic} in the same order as they appear in the respective theorem and structured by subsections whose titles summarize the key idea of proof. Each of these subsections begins with a proof of the respective statement for the easier case of the free state and ends with an extension to the more intricate case of gradient Gibbs measures of height-period $2$.

\section*{Acknowledgement }

We thank an anonymous referee for valuable comments and suggestions, which improved the quality of the paper.

\section{Definitions and main results}
We begin with an overview on the formal framework of our studies. The first part is similar to \cite{HeKu23}, but includes moreover a discussion of the concepts of 
\textit{liftable Gibbs measures} and \textit{height-offset variables} which are particular to the present work, see also \cite{Sh05,LaTo23}.
\subsection{Preliminary definitions}\label{subsec: def}
In the following, we let $\mathbb{T}^d=(V,E)$ denote the Cayley tree of order $d$, i.e., every vertex has exactly $d+1$ nearest neighbors with vertex set $V$, (unoriented) edge set $E$ and root $\rho$. Besides $E$, we also consider the set $\Vec{E}$ consisting of all oriented pairs of vertices connected by an edge. We will write $\{x,y\}$ to denote an unoriented edge and $(x,y)$ to denote an oriented edge.
Furthermore, we will partition $\Vec{E}={}^\rho \Vec{E} \cup \Vec{E}^\rho$, where ${}^\rho \Vec{E}$ denotes those oriented edges which point away from the root $\rho$ and $\Vec{E}^\rho$ is the complementary subset of those edges which point towards $\rho$.

For any $\Lambda \subset V$ we denote by

\begin{equation*}
\partial \Lambda := \{ x \notin \Lambda : d(x,y) = 1 \mbox{ for some } y \in \Lambda\}
\end{equation*} its \textit{outer boundary}.
Furthermore, for any $r \in \N$ we denote by $W_r:=\{v \in V \colon d(\rho,v)=r \}$ the sphere of radius $r$ and by $B_r:=\{v \in V \colon d(\rho,v) \leq r \}$ the closed ball of radius $r$.
Finally, for any $u \in V$ we denote by $S(u):=\{v \in V \colon \{u,v\} \in E \text{ and }d(v,\rho)=d(u,\rho)+1\}$ the set of \textit{children} (direct successors) of $v$.
Within some of the proofs given in Section \ref{sec: Proof} we will also consider the regular $d$-ary tree, where the root has degree $d$ and every other vertex has degree $d+1$.

On the level of edges, we consider the following:
For any $x,y \in V$ we denote by $\Gamma(x,y) \subset \Vec{E}$ the unique directed path from $x$ to $y$.
Furthermore, for any  $\Lambda \subset V$ we define
\begin{align}
E_{\Lambda}&:=\{\{x,y \} \in E \mid x,y \in \Lambda \}, \cr    
\vec{E}_{\Lambda}&:=\{(x,y) \in \vec{E} \mid x,y \in \Lambda \}, \quad \text{and }\cr
\vec{E}_{\Lambda}^\rho&:=\{(x,y) \in \vec{E}^\rho \mid x,y \in \Lambda \}
\end{align} 
as the set of unoriented (oriented, resp.) edges in $\Gamma$ with both end points in $\Lambda$.
Similarly,
by 
\begin{align}
% E_{\partial \Lambda,\Lambda}&:=\{\{x,y \} \in E \mid x \in \partial \Lambda, \ y \in \Lambda \} \quad \text{and }\cr    
\vec{E}_{\partial \Lambda, \Lambda}&:=\{(x,y) \in E \mid x\in \partial \Lambda, \ y \in \Lambda \}\end{align}
we denote those oriented edges which connect a vertex on the boundary $\partial \Lambda$ to a vertex inside $\Lambda$.

\vspace{5pt}
\paragraph*{\textbf{Height-configurations and gradient configuration}}
Let $\Omega:=\Z^V=\{(\omega_x)_{x \in V} \mid \omega_x \in \Z\}$ denote the set of (integer-valued) \textit{height-configurations} endowed with the product-$\sigma$-algebra $\mathcal{F}=\mathcal{P}(\Z)^{\otimes V}$ generated by the \textit{spin-variables} (projections) $\sigma_y:  \Omega \rightarrow \Z, \ \omega \mapsto \omega_y$, where $y \in V$.

For any arbitrary $\Lambda \subset V$ we denote by $\mathcal{F}_\Lambda$ the $\sigma$-algebra on $\Omega$ generated by the spin-variables for vertices inside $\Lambda$. 
Furthermore, for any arbitrary $\Lambda \subset V$ we denote by $\Omega_\Lambda:=\{(\omega_x)_{x \in \Lambda} \mid \omega_x \in \Z \}$ the set of height configurations inside $\Lambda$. Similarly, $\sigma_\Lambda: \Omega \rightarrow \Omega_\L, \quad \sigma_{\Lambda}((\omega_x)_{x \in V})=(\omega_x)_{x \in \Lambda}$ is the canonical projection to the spins inside $\Lambda$.

Besides the set of height configurations $\Omega$, we consider the set of \textit{gradient configurations}
\begin{equation}\label{eq: gradConf}
\Omega^\nabla:=\{(\zeta_{(u,v)})_{(u,v) \in \vec{E}} \in \Z^{\vec{E}} \mid \zeta_{(u,v)}=-\zeta_{(v,u)}\} \cong \Z^{\Vec{E}^\rho},\end{equation} which describe height increments along directed edges instead of absolute heights. Note that the isomorphy \eqref{eq: gradConf} is clear as the gradient variable on an oriented edge $(u,v)$ is determined 
by the gradient variable on the oriented edge with the same vertices $u,v$ and pointing towards the root. 
The corresponding \textit{gradient-projection} reads $\nabla: \Omega \rightarrow \Omega^\nabla; \ (\omega_x)_{x \in V} \mapsto (\omega_y -\omega_x)_{(x,y) \in \vec{E}}$ with \textit{gradient spin-variables} $\eta_{(x,y)}(\zeta) :=\zeta_{(x,y)}$, which generate the respective gradient sigma-algebra $\mathcal{F}^\nabla:=\mathcal{P}(\Z)^{\otimes \vec{E}^\rho}$ on $\Omega^\nabla$. For any arbitrary $\Lambda \subset V$ we denote by $\mathcal{F}_\Lambda^\nabla$ the product-$\sigma$-algebra generated by the gradient spin variables $(\eta_b)_{b \in \vec{E}^\rho_\Lambda}$.
Given any gradient configuration $\zeta \in \Omega^\nabla$, let $\sigma^{x,s}(\zeta) \in \Omega$ denote the height configuration obtained from $\zeta$ by pinning the height to be $s$ at vertex $x$, i.e., it is given for every vertex $v \in V$ by 
\begin{equation}\label{eq: pinning at x}
\sigma^{x,s}_v(\zeta)=s+\sum_{b \in \Gamma(x,v)}\zeta_b 
\end{equation}
The map $\sigma^{x,s}: \Omega^\nabla \rightarrow \Omega$ thus 
fixes one particular representative in the set of height-configurations with the same gradient configuration, 
and will be frequently used below. In particular, $\Omega^\nabla$ is one-to-one to the set $\Omega/ \Z$ of height-configurations modulo \textit{joint height-shifts} of the form \begin{equation}
\tau_a: \Omega \rightarrow \Omega; 
 \ (\omega_x)_{x \in V} \mapsto (\omega_x-a)_{x \in V}\end{equation} for some $a \in \Z.$ 
%\vspace{5pt}
\paragraph*{\textbf{Transfer operators and Markovian Gibbsian specifications.}}\hypertarget{par: transfer operator}{}
By a \textit{transfer operator} on $\mathbb{T}^d$ we mean a function $
Q: \Z \rightarrow (0,\infty)$
which is symmetric (i.e., $Q(-i) = Q(i)$ for every $i\in \Z$) and satisfies $Q \in \ell^{\frac{d+1}{2}}(\Z)$ in that $\Vert Q \Vert_{\frac{d+1}{2}}:=\left(\sum_{i \in \Z}Q(i)^\frac{d+1}{2} \right)^\frac{2}{d+1}<\infty$.
The idea behind is that the transfer operator is usually given in terms of a suitable symmetric interaction function $U : \Z \rightarrow [0,+\infty)$ as
\begin{equation}\label{eq: interaction function}
Q(i) = e^{-\beta U(i)},
\end{equation}
where $\beta>0$ is interpreted as the inverse of a temperature. 

A transfer operator $Q$ induces the (local) \textit{Markovian Gibbsian specification}
\[
\gamma = \{\gamma_{\Lambda}: \mathcal{F} \times \Omega \rightarrow [0,1] \}_{\Lambda  \Subset V}
\]
by the assignment
\begin{equation}\label{eq: GibbsSpecification}
\gamma_\Lambda(\sigma_\Lambda= \tilde{\omega}_\Lambda \mid \omega)=\frac{1}{Z_\Lambda(\omega_{\partial \Lambda})}\left(\prod_{\{x,y\} \in E_\Lambda}Q(\tilde{\omega}_x-\tilde{\omega}_y) \right) \, \prod_{(x,y) \in \vec{E}_{\partial \Lambda, \Lambda}}Q(\omega_x-\tilde{\omega}_y), 
\end{equation}
for every $\Lambda\Subset V$, $\tilde\omega \in \Omega$ and $\omega\in \Omega$ acting as a boundary condition. Here, the \textit{partition function} $Z_{\Lambda}$ gives for every $\omega\in \Omega$ the normalization constant $Z_{\Lambda}(\omega_{\partial\Lambda})$ turning $\gamma_{\Lambda}(\cdot \mid \omega)$ into a probability measure on $(\Omega,\mathcal{F})$. The assumption $Q>0$ and the summability assumption $Q\in \ell^{\frac{d+1}{2}}(\Z)$ guarantee that all partition functions are finite and the kernels are therefore well-defined. See Lemma 1 in \cite{HeKu21a}.
As usual, the kernels $\gamma_\Lambda$ are \textit{proper} in the sense of standard Gibbsian theory (e.g., \cite{Ge11}) in that they fix the boundary condition $\omega_{\Lambda^c}$ outside of $\Lambda$.

By construction, the kernels in \eqref{eq: GibbsSpecification} 
are invariant under joint height shifts $\tau_a$ in that \[\gamma_\Lambda\big(\sigma_\Lambda=(\tau_a(\tilde{\omega}))_\Lambda \, \big | \, \tau_a(\omega)\big)=\gamma_\Lambda(\sigma_\Lambda=\tilde{\omega}_\Lambda \mid \omega).\] 
This invariance allows to project each kernel $\gamma_\Lambda$ to a respective \textit{gradient kernel} $\gamma^\nabla$ as follows, where we need to take care of a tree-typical subtlety. 
% \sout{\[
% \gamma^\nabla = \{\gamma^\nabla_{\Lambda}: \mathcal{F}^\nabla \times \Omega^\nabla \rightarrow [0,1] \}_{\Lambda  \Subset V}.
% \]}
For this we note that on the tree $(V,E)$ the subgraph $(\Lambda^c, E_{\Lambda^c})$ is disconnected, if the finite subset $\Lambda$ is non-empty. Thus, for arbitrary non-empty $\Lambda \Subset V$ the gradient variables in the complement $\Lambda^c$ (i.e., corresponding to edges in $\vec{E}_{\Lambda^c}$) do not determine the \textit{relative heights at the boundary} $[\omega]_{\partial \Lambda} \in \Z^{\partial \Lambda} \mod \Z$, where $[\tilde{\omega}]_{\partial \Lambda}=[\omega]_{\partial \Lambda}$ if and only if $\tilde{\omega}_{\partial \Lambda}=\left(\tau_a(\omega) \right)_{\partial \Lambda}$ for some $a \in \Z$. Hence, in a gradient kernel on a tree, we need to condition on the gradient field outside $\Lambda$, $(\eta_b)_{b \in \vec{E}_{\Lambda^c}}$ \textbf{and} additionally on sums of gradients for those edges in $\vec{E}_{\Lambda \cup \partial \Lambda}$ that lie on a path that connects any two vertices $x,y$ at the boundary $\partial \Lambda$, as those sums express the relative height between $x$ and $y$ in terms of gradients. More formally, we define the equivalence relation $[\cdot]_{\partial \Lambda}$ now on the level of gradient configurations as
\begin{equation*}
[\tilde{\zeta}]_{\partial \Lambda}=[\zeta]_{\partial \Lambda} \quad \text{ if and only if } \sum_{b \in \Gamma(x,y)}\tilde{\zeta}_b = \sum_{b \in \Gamma(x,y)}\zeta_b   
\end{equation*}
for all $x,y \in \partial \Lambda$.
Then note that $[\tilde{\omega}]_{\partial \Lambda} = [\omega]_{\partial \Lambda}$ if and only if $[\nabla\tilde{\omega}]_{\partial \Lambda} = [\nabla \omega]_{\partial \Lambda}$. In words, the equivalence class $[\zeta]_{\partial \Lambda}$ of gradient configurations defines a unique configuration of relative heights at the boundary.
%However, \sout{in any case}{\color{blue} the relative heights at the boundary are uniquely defined by the \textbf{whole} gradient configuration $\zeta \in \Omega^\nabla$} 
% \sout{the map $\Omega \ni \omega \mapsto [\omega]_{\partial \Lambda} \in \Z^{\partial \Lambda} \mod \Z$ is clearly measurable with respect to the whole gradient field $\mathcal{F}^\nabla$} as any $x,y \in \partial \Lambda$ are connected by a path in $V$.
In what follows we call the respective $\sigma$-algebra \begin{equation} \mathcal{T}_\Lambda^\nabla:=\sigma\left((\eta_{(x,y)})_{(x,y) \in \vec{E}_{\Lambda^c}}, \ [\eta]_{\partial \Lambda} \right) \end{equation}
on $\Omega^\nabla$ the \textit{outer gradient $\sigma$-algebra for $\Lambda$.}
Note that $\mathcal{T}_\Lambda^\nabla$ is strictly bigger than $\mathcal{F}^\nabla_{\Lambda^c}$. This is a particularity of the tree and does not happen on higher-dimensional lattices.\\
Hence, the gradient kernels $\gamma^\nabla_\Lambda$ are 
\begin{equation}
\gamma^\nabla_\Lambda(\eta_{\Lambda \cup \partial \Lambda}= \zeta_{\Lambda \cup \partial \Lambda} \mid \zeta):=\gamma_\Lambda(\sigma_\Lambda =\omega_\Lambda \mid \omega)    
\end{equation}
for any $\omega \in \Omega$ such that $(\nabla \omega)_{\Lambda^c}=\zeta_{\Lambda^c}$ and $[\nabla \omega]_{\partial \Lambda}=[\zeta]_{\partial \Lambda}$.
More generally, for every bounded continuous function $F$ on $\Omega^\nabla$ we have
\begin{equation}\label{grad}
	\int_{\Omega^\nabla} F(\rho) \gamma^\nabla_\Lambda(\text{d}\rho \mid \zeta) = \int_\Omega F(\nabla \varphi) \gamma_\Lambda(\text{d}\varphi\mid\omega)
	%\gamma'_{\L}(F \mid \zeta)=Q (F(\eta) \mid {\eta \sim_{\partial \L} \zeta}).
	\end{equation}
     where $\omega \in \Omega$ is any height-configuration with $(\nabla \omega)_{\Lambda^c} = \zeta_{\Lambda^c}$ and $[\nabla \omega]_{\partial \Lambda}=[\zeta]_{\partial \Lambda}$.
This defines a family of probability kernels $(\gamma^\nabla_\Lambda)_{\Lambda \Subset V}$ from $(\Omega^\nabla,\mathcal{T}^\nabla_\Lambda)$ to $(\Omega^\nabla,\mathcal{F}^\nabla)$, which we call the \textit{gradient Gibbs specification} associated with $Q$. For further illustration, in Section \ref{App: measurability} we explicitly consider \eqref{grad} in the special case of a one-element volume.
\vspace{5pt}
\paragraph*{\textbf{Gibbs measures and gradient Gibbs measures.}}
Following Dobrushin \cite{Do68} and Landford and Ruelle \cite{LaRu69}, a probability measure $\mu$ on $(\Omega,\mathcal{F})$ is called a \textit{Gibbs measure} for the specification $\gamma$ iff the \textit{DLR-equation} holds
\begin{equation}
\int_\Omega F(\omega)\mu(d\omega)=\int_\Omega \mu(d \tilde{\omega})\int_\Omega F(\tilde{\omega})\gamma_\Lambda(d\tilde{\omega} \mid \omega)
\end{equation}
for every bounded continuous function $F$ on $\Omega$ and every $\Lambda \Subset V$.
% \begin{equation}
% \int_\Omega \gamma_\Lambda(A \mid \omega) \mu(d\omega)=\mu(A)    \quad \text{ for every } A \in \mathcal{F} \text { and } \Lambda \Subset V.
% \end{equation}
Similarly, a gradient measure $\nu$ on $(\Omega^\nabla,\mathcal{F}^\nabla)$
is called a \textit{gradient Gibbs measure} for the gradient specification $\gamma^\nabla$ if and only if
\begin{equation}
\int_{\Omega^\nabla} F^\nabla(\zeta)\nu(d\zeta)=\int_{\Omega^\nabla} \nu(d \zeta)\int_{\Omega^\nabla} F(\tilde{\zeta})\gamma_\Lambda(d\tilde{\zeta} \mid \zeta)
\end{equation}
for every bounded continuous function $F^\nabla$ on $\Omega^\nabla$ and every $\Lambda \Subset V$.
%the \textit{DLR-equation} holds

% \begin{equation}
% \int_{\Omega^\nabla} \gamma_\Lambda^\nabla(A \mid \zeta) \mu(d\zeta)=\mu(A) \quad \text{ for every } A \in \mathcal{F}^\nabla \text { and } \Lambda \Subset V.    
% \end{equation}
\paragraph*{\textbf{Tree-indexed Markov chains.}}
Following \cite[Chapter 12]{Ge11}, choosing any oriented edge $(u,v) \in \vec{E}$ of the tree induces a splitting of the sets of vertices $V$ into \textit{the past}
\begin{equation}\label{eq: the past}
[-\infty,uv):=\{w \in V \mid \text{d}(w,u)<\text{d}(w,v)\}
\end{equation}
and \textit{the future} $[uv,+\infty):=V \setminus [-\infty,uv)$. See also Figure \ref{Fig: SpliitingIntoSubtrees} below for an illustration.
Then, a probability measure $\mu$ on $(\Omega,\mathcal{F})$ is by definition a \textit{tree-indexed Markov chain} iff for every $(u,v) \in \vec{E}$
\begin{equation}
\mu(\sigma_v=\cdot \mid \mathcal{F}_{[\infty,uv)})=\mu(\sigma_v=\cdot \mid \mathcal{F}_{\{ u \}}) \quad \mu-\text{a.s.}    
\end{equation}
\vspace{5pt}
\paragraph*{\textbf{Liftable gradient Gibbs measures}}
While every Gibbs measure can be projected to a gradient Gibbs measure for the respect gradient specification via the map $\nabla$, which sends a spin configuration to its gradients, the converse is not generally true. This distinction corresponds to whether a gradient Gibbs measure is \textit{liftable} (or \textit{smooth} in the sense of Sheffield\cite{Sh05}) in that it can be lifted to a proper Gibbs measure, or not.
% in that it can be lifted to a proper Gibbs measure or \textit{delocalized} if it cannot be lifted. 
% Sheffield \cite{Sh05} introduced height-offset variables for $\mathbb{Z}$-\valued spin-systems on the lattice $\mathbb{Z}^d$, which once they exist allow to \textit{anchor} a gradient Gibbs measure \textit{at infinity} to lift it to a proper Gibbs measure. Lammers and Toninelli extended ...  
\begin{defn}\label{def: HeightOffset}
A function $H: \Omega \rightarrow \R \cup \{\pm \infty\}$ is called a \underline{height-offset variable} for a gradient Gibbs measure $\nu$ iff it fulfills the following properties:
\begin{enumerate}[a)]
\item For all $a \in \Z$ we have $H(\tau_{-a}(\omega))= H(\omega)+a$ for all $\omega \in \Omega$;
\item $H$ is measurable with respect to the tail-$\sigma$-algebra $\mathcal{T}$ on $\Omega$;
\item $\nu( \vert H \circ\sigma^{x,s}  \vert <\infty)=1$ for some site $x\in V$ and some 
integer height $s\in \Z$.
%$\nu( \vert H \vert <\infty)=1$.
\end{enumerate}
\end{defn}
Here, $\sigma^{x,s}$ is defined as in \eqref{eq: pinning at x}.
For c) note that by a) the gradient event of finite HOV 
$\{\vert H \circ\sigma^{x,s}  \vert <\infty\}\in \mathcal{F}^\nabla$ 
is independent of the choice of the auxiliary pinning site $x$ and the pinning height 
$s$. Thus c) can be equivalently written by fixing $x$ to be the root $\rho$, and $s=0$ to become 
the requirement 
$\nu( \vert H \circ\sigma^{\rho,0}  \vert <\infty)=1$. 
For a given height-offset variable $H$ let 
$\Omega^\nabla_H:= \Omega^\nabla \cap 
\vert H \circ\sigma^{\rho,0}  \vert <\infty)
\}$ be the associated set of \textit{good gradient configurations} where $H$ is well-defined. 
Then, for fixed HOV $H$, and for any fixed auxiliary choices of site $x \in V$ and height 
$s \in \Z$ define  the pinning-at-infinity map  carrying us from gradients 
to height configurations as follows. 
\begin{equation}\label{eq: GGMtoGM}
G_H: \Omega_H^\nabla \rightarrow \Omega; \ G_H(\zeta):=\sigma^{x,s}(\zeta)-\lfloor H(\sigma^{x,s}(\zeta)) \rfloor,
\end{equation}
where $\lfloor \alpha \rfloor$ denotes the largest integer less or equal to $\alpha \in \R$.

Let us comment on this definition.  
Note that the HOV $H$ appearing in the definition of $G_H$ takes as arguments height-configurations, and not gradient configurations. 
 In the definition of $G_H$ 
 the height configurations which are obtained from a gradient configuration $\zeta$ by means of 
$\sigma^{x,s}(\zeta)$ appear in two places.  
This is just a necessary auxiliary  pinning of the height to be $s$ in the vertex $x$ which at first glance 
could lead to $x,s$-dependent results.  
Note however that by the defining properties of a height-offset variable,  
the definition of the function $G_H$ is independent of the arbitrary choices of $x$ and $s$,
as the difference $\sigma^{y,t}(\zeta)-\sigma^{x,s}(\zeta)=(t-s)-\sum_{b \in \Gamma(x,y)}\zeta_b$ drops out of the defining equation \eqref{eq: GGMtoGM}.
In particular, \eqref{eq: GGMtoGM} implies that the image measure 
\begin{equation}\label{eq: pinned measure muh}
\mu^H:=\nu \circ G_H^{-1} \in  \mathcal{M}_1(\Omega,\mathcal{F})    
\end{equation} has single-site marginals given by
\begin{equation}\label{eq: SingSiteMarginals}
\mu^H(\sigma_x=t)=\nu(\, \lfloor H(\sigma^{x,0}) \rfloor=- t)=\nu(\sum_{b \in \Gamma(\rho,x)}\eta_b-\, \lfloor H(\sigma^{\rho,0}) \rfloor=t), \quad t \in \Z.    
\end{equation}
$\mu^H$ is the measure on the height configurations which is obtained from the gradient measure $\nu$ by means of the height-offset variable $H$, which provides a pinning at infinity 
which is suitable to preserve the Gibbs property, as we will discuss in a moment.
We see that $\mu^H$ is obtained from the gradient Gibbs measure by adding fluctuating localization centers $\lfloor H(\sigma^{\rho,0}) \rfloor$.

The relevance of the defining properties of a height-offset variable lies in the fact that its properties imply the Gibbs property of the measure $\mu^H$ in the space of height configurations. More precisely we have the following theorem.
\begin{thm}[Gibbs property of measures pinned at infinity]\label{thm: DLR}
Let $\nu$ be a gradient Gibbs measure for $\gamma^\nabla$ which possesses a height-offset variable $H$. 

Then the image measure $\mu^H=\nu \circ G_H^{-1} \in  \mathcal{M}_1(\Omega,\mathcal{F})$ is a Gibbs measure for $\gamma$.
\end{thm}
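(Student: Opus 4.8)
The plan is to verify the DLR equation for $\mu^H$ directly, by pushing forward the DLR equation that $\nu$ satisfies for $\gamma^\nabla$ through the map $G_H$. Fix $\Lambda \Subset V$ and $A \in \mathcal{F}$; we must show $\int_\Omega \gamma_\Lambda(A\mid\omega)\,\mu^H(d\omega) = \mu^H(A)$. By definition of the image measure this is $\int_{\Omega^\nabla_H} \gamma_\Lambda(A \mid G_H(\zeta))\,\nu(d\zeta) = \nu(G_H^{-1}(A))$, and since $G_H^{-1}(A) \in \mathcal{F}^\nabla$ (recall $\Omega^\nabla_H$ is $\mathcal{F}^\nabla$-measurable and $G_H$ is $\mathcal{F}^\nabla/\mathcal{F}$-measurable because each spin $\sigma_y\circ G_H$ is a finite sum of gradients minus $\lfloor H\rfloor$, the latter being tail- hence gradient-measurable), the right-hand side equals $\nu(G_H^{-1}(A))$. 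So it suffices to identify the integrand $\gamma_\Lambda(A\mid G_H(\zeta))$ with $\gamma^\nabla_\Lambda(G_H^{-1}(A)\mid\zeta)$ up to a $\nu$-null set, after which the DLR equation for $\nu$ closes the argument.

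The heart of the matter is therefore the pointwise (a.e.) identity
\[
\gamma_\Lambda\bigl(A \mid G_H(\zeta)\bigr) = \gamma^\nabla_\Lambda\bigl(G_H^{-1}(A) \mid \zeta\bigr), \qquad \text{for } \nu\text{-a.e. } \zeta.
\]
To establish it I would first reduce to cylinder events $A = \{\sigma_\Delta = \tilde\omega_\Delta\}$ with $\Delta \supseteq \Lambda$ finite, since these generate $\mathcal{F}$ and both sides are measures in $A$. The key structural fact is the shift-covariance of $H$: for $a\in\Z$, $H(\omega+a)=H(\omega)+a$, hence $\lfloor H(\omega+a)\rfloor = \lfloor H(\omega)\rfloor + a$, so $G_H$ intertwines the $\Z$-shift in the obvious way and is constant on shift-orbits in the height direction only after subtracting the floor — more precisely, $\nabla(G_H(\zeta)) = \zeta$, i.e. $G_H$ is a genuine section of the gradient projection $\nabla$ on $\Omega^\nabla_H$. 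Using this, $G_H(\zeta)_\Lambda$ is determined by $\zeta$ together with the ``boundary height class'' information, which is exactly what $\gamma^\nabla_\Lambda$ was defined to see via $[\,\cdot\,]_{\partial\Lambda}$. Concretely: pick any $\omega\in\Omega$ with $(\nabla\omega)_{\Lambda^c} = \zeta_{\Lambda^c}$ and $[\nabla\omega]_{\partial\Lambda} = [\zeta]_{\partial\Lambda}$; then $\omega$ and $G_H(\zeta)$ agree outside $\Lambda$ up to a joint shift on each connected component of $\Lambda^c$, but since $\mathbb{T}^d$ minus a finite set still has all boundary vertices mutually connected through $\Lambda^c$ only if $\Lambda$ is connected — in general one must track the shift per component. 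The Gibbsian kernel $\gamma_\Lambda(\sigma_\Lambda=\cdot\mid\omega)$ depends on $\omega$ only through $\omega_{\partial\Lambda}$, and by the very definition \eqref{eq: GibbsSpecification}–\eqref{eq: GGMtoGM} one reads off that $\gamma_\Lambda(\sigma_\Lambda=\tilde\omega_\Lambda\mid G_H(\zeta))$ equals $\gamma^\nabla_\Lambda(\eta_\Lambda = (\nabla\tilde\omega)_\Lambda \mid \zeta)$ whenever the gradient data are compatible; and $G_H^{-1}(\{\sigma_\Lambda=\tilde\omega_\Lambda\}\cap\{\sigma_{\Lambda^c}=\ldots\})$ unwinds to precisely the corresponding gradient cylinder. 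One then integrates/sums over the configuration outside $\Lambda$ to recover a general cylinder $A$ with $\Delta\supsetneq\Lambda$.

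The main obstacle I anticipate is bookkeeping, not depth: making the identification of $\gamma_\Lambda(\,\cdot\mid G_H(\zeta))$ with $\gamma^\nabla_\Lambda(\,\cdot\mid\zeta)$ watertight requires care about (i) the per-component height shifts relating $G_H(\zeta)$ to an arbitrary representative $\omega$ of the gradient data outside $\Lambda$ — these shifts cancel because $\gamma_\Lambda$ depends only on height \emph{differences} $\tilde\omega_x-\omega_y$ across $\partial\Lambda$ and every such pair $(x,y)$ lies in one component of $\Lambda^c$ together; and (ii) confirming that the floor operation $\lfloor H\rfloor$, which is what makes $G_H$ land in $\Omega$ rather than $\R^V$, does not obstruct the section property $\nabla\circ G_H = \mathrm{id}$: indeed $\lfloor H\rfloor$ subtracts the \emph{same} integer at every vertex, so it is annihilated by $\nabla$. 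Once these two points are checked, the proof is a one-line pushforward of the $\nu$-DLR equation, together with the observation that all manipulations are valid $\nu$-a.s.\ because $\nu(\Omega^\nabla_H)=1$ by property c) of the height-offset variable. I would also note in passing that the argument never uses that $\nu$ is free or height-periodic — it works for any GGM admitting an HOV, which is the generality claimed.
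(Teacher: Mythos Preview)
Your approach is correct and in essence the same as the paper's: both rest on the observation that tail-measurability of $H$ forces $\lfloor H\rfloor$ to be invariant under the kernel $\gamma_\Lambda$, so that Gibbsian resampling commutes with the centering $\omega\mapsto\omega-\lfloor H(\omega)\rfloor$, after which the gradient DLR equation for $\nu$ closes the argument. The paper packages this as a computation of conditional probabilities $\mu^H(\sigma_{\Lambda\cup\partial\Lambda}=\cdot\mid\sigma_{\Delta\setminus\Lambda}=\cdot)$ for growing $\Delta$, whereas you package it as the pointwise kernel identity $\gamma_\Lambda(A\mid G_H(\zeta))=\gamma^\nabla_\Lambda(G_H^{-1}(A)\mid\zeta)$ followed by the integrated DLR equation; these are equivalent reformulations of the same mechanism.

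One small cleanup: your discussion of per-component height shifts in point~(i) is an unnecessary detour and, as stated, slightly off (a pair $x\in\Lambda$, $y\in\partial\Lambda$ does not lie in a single component of $\Lambda^c$). The point is moot because $\nabla G_H(\zeta)=\zeta$, so $G_H(\zeta)$ is itself a valid representative for the outer gradient data in the definition of $\gamma^\nabla_\Lambda(\cdot\mid\zeta)$; no comparison with a generic $\omega$ is needed. The cleanest route to your key identity is simply: any $\omega'$ in the support of $\gamma_\Lambda(\cdot\mid G_H(\zeta))$ satisfies $\omega'_{\Lambda^c}=G_H(\zeta)_{\Lambda^c}$, hence $H(\omega')=H(G_H(\zeta))\in[0,1)$ by tail-measurability and shift-covariance, so $\lfloor H(\omega')\rfloor=0$ and $G_H(\nabla\omega')=\omega'$. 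Since $\nabla$ pushes $\gamma_\Lambda(\cdot\mid G_H(\zeta))$ forward to $\gamma^\nabla_\Lambda(\cdot\mid\zeta)$ by definition of the gradient specification, the identity follows.
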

The Theorem goes back to Sheffield \cite{Sh05} and Lammers and Toninelli \cite{LaTo23}, but for convenience of the reader we include a detailed proof in the Appendix. 
Here we pay particular attention to the tree-specific 
measurability issues of the gradient kernels, which stem from the disconnectedness 
of the complements of finite volume, as explained above.  

\begin{rk}
Theorem \ref{thm: DLR} above does not impose any assumptions on regularity of the tree $(V,E)$ or on symmetries of the gradient Gibbs measure $\nu$.
\end{rk}
We will construct height-offset variables as the limit of spherical averages of height configurations. To verify that this limit exists as a real-valued random variable on $\Omega$ $\nu$-almost surely, we will also consider spherical averages of gradient fields with a prescribed absolute height at the root. 
\begin{defn}[Spherical averages of height- and pinned gradient configurations]\label{def: SphericalAverage}\hfill \\
\begin{enumerate}[a)] 
    \item We denote by $H_r: \Omega \rightarrow \R$ 
\begin{equation}
H_r(\omega):=\frac{1}{\vert W_r \vert}\sum_{v \in W_r}\omega_v 
\end{equation}
the average value of the heights at the $r$-sphere, $r \in \N$.
\item Furthermore, by $H^{\nabla}_r: \Omega^\nabla \rightarrow \R$
\begin{equation}
H^{\rho,0}_r(\zeta):=\frac{1}{\vert W_r \vert}\sum_{v \in W_r}\sigma_v^{\rho,0}  (\zeta)  
\end{equation}
we denote the average value of the heights of the gradient field pinned at height zero at the root $\rho$, $r \in \N$.
\end{enumerate}
\end{defn}
\begin{rk}
We have $H_r(\omega)=H_r^{\rho,0}(\nabla \omega)+\omega_\rho$ for every $r \in \N$ and $\omega \in \Omega$.
\end{rk}
\begin{assumption}[Second moment condition on transfer operator $Q$]\label{ass: SquareIntegrability}
In the following Theorems \ref{thm: Free measure} and \ref{thm: 2periodic} we assume that
\begin{equation}
\sum_{j \in \mathbb{Z}}j^2Q(j) < \infty,    
\end{equation}
which means that the probability measure on $\Z$ described by the probability mass function $\frac{Q(\cdot)}{\Vert Q \Vert_1}$ has finite second moments.
\end{assumption}
\subsection{Result: Pinning at infinity for free state under second moment assumption on transfer operator}
At first, we show that the gradient Gibbs measure which arises for free boundary conditions can be pinned at infinity.
\begin{defn}[Free state of i.i.d-increments]
Given a positive symmetric transfer operator $Q \in \ell^\frac{d+1}{2}(\mathbb{Z})$, we call the probability measure $\nu$ on $(\Omega^\nabla, \mathcal{F}^\nabla)$ with marginals     
\begin{equation}
		\nu(\eta_{\Lambda \cup \partial \Lambda}=\zeta_{\Lambda \cup \partial \Lambda})=\frac{1}{Z_\Lambda}\prod_{(x,y) \in {}^\rho\Vec{E}_{\Lambda \cup \partial \Lambda}}Q(\zeta_{(x,y)}).
	\end{equation}
 \textbf{the free state (of i.i.d.-increments)}. 
\end{defn}

% \subsubsection{The free state of i.i.d.-increments}
With that notation in mind, the result reads
\begin{thm}[Pinning at infinity of the free gradient state: existence, regularity of density, loss of translation invariance and of Markov property]\label{thm: Free measure}\mbox{}\\
Let $Q: \Z \rightarrow (0,\infty)$ be any positive symmetric transfer operator of gradient type such that $\sum_{j \in \N}j^2 Q(j) < \infty$.
Then the following holds true for the corresponding free state $\nu$ on the Cayley tree of order $d \geq 2$.
 \begin{itemize}
	\item[a)]\textbf{Existence.}\hypertarget{res: existence free}{} Let $(H_r)_{r \in \mathbb{N}}$ be as in Definition \ref{def: SphericalAverage}. Then $H:=\lim_{r \rightarrow \infty}H_r$ is a height-offset variable for $\nu$ in the sense of Definition \ref{def: HeightOffset}, where the limit holds almost surely and in $\mathcal{L}^2$ with respect to the same measure $\nu  \circ \left(\sigma^{\rho,0} \right)^{-1}$. In particular, the measure $\mu^H$ associated with $H$ and $\nu$ via Theorem \ref{thm: DLR} is a proper Gibbs measure on the space of height configurations.
 \item[b1)]\textbf{Regularity of density of HOV.}\hypertarget{res: regularity density free}{} The corresponding distribution of $H^{\rho,0}$ obtained from Definition \ref{def: SphericalAverage} under $\nu$ has an infinitely often differentiable Lebesgue-density $f_{H^{\rho,0}}$.\\
 Moreover, $f_{H^{\rho,0}}$ is strictly positive on $\R$.
 \item[b2)]\textbf{Localization of single-site marginal of pinned measure.}\hypertarget{res: localization free}{} Assume furthermore on the interaction $Q$ that there is some $R>0$ such that the moment generating function for a single gradient variable, $t \mapsto \sum_{j \in \Z}\exp(tj)\frac{Q(j)}{\Vert Q \Vert_1}$ is finite for $t \in (-R,+R)$. \\
 Then, for every $A < (d+1)R$ there exists a $B(A) \in (0,\infty)$ such that the single-site marginal at the root satisfies the following exponential concentration:
 \[\mu^H(\vert \sigma_\rho \vert \geq s) \leq B(A)\exp(-As), \quad \text{for every } s \in \mathbb{N}.\]
  \item[c1)]\textbf{Peculiarity 1.}\hypertarget{res: peculiarity 1 free}{} The lifted measure Gibbs $\mu^H$ is necessarily not invariant under the translations of the tree.
 \item[c2)]\textbf{Peculiarity 2.}\hypertarget{res: peculiarity 2 free}{} The measure $\mu^H$ is not a tree-indexed Markov chain and in particular not extreme in the set of all Gibbs measures for the specification associated with $Q$ via \eqref{eq: GibbsSpecification}.
 \end{itemize}
\end{thm}
\subsection{Result: Pinning at infinity for 2-height periodic GGMs under second moment assumption on transfer operator}\label{subsec: results 2-height periodic} 
For general integer-valued gradient models on regular trees, described via a transfer operator $Q$, 
\cite{HKLR19,HeKu21a,HeKu23,AbHeKuMa24}, provided period-$q$ height-periodic GGMs for $Q$ via a two-layer hidden Markov model 
construction with an internal layer $\Z_q$. The hidden variables on the internal layer interact via a suitably chosen $q$-dependent fuzzy transfer operator $Q^q$. The  full measure on integer-valued increments 
is obtained from here via application of transition kernels acting on the edges. 
The classes of these GGMs associated with different coprime values of $q$ were then shown to be disjoint. Furthermore, for any summable homogeneous $Q$ there always exist non-translation invariant gradient Gibbs measures corresponding to an internal layer with sufficiently large (depending on $Q$) $q$ (see \cite{HeKu23}).
% On the more concrete side, such height-periodic GGMs naturally show up via \textit{non-normalizable} periodic boundary law solutions to the $Q$-dependent tree recursions \cite{HKLR19}. This is related to but goes beyond Zachary \cite{Z83} as he studies \textit{normalizable} boundary law solutions. 

% KKK

% Spatially homogeneous gradient Gibbs measures for underlying hidden clock models were introduced in \cite{HKLR19,KS17} in the context of a height-periodicity approach towards Zachary's \cite{Z83} boundary law-equation for those Gibbs measures on the tree which are also tree-indexed Markov chains. The notion was refined and extended in \cite{HeKu23} to provide a broad statement on existence of gradient Gibbs measures which are not invariant under the automorphisms of the tree, requiring only a summability assumption on the transfer operator $Q$.   
We shortly present the construction for the special case of period-2-height periodic GGMs. 

These are GGMs for the same interaction 
$Q$ as the free states we discussed before. 
Here the internal hidden layer variables 
have a distribution in infinite volume 
which turns out 
to be given by an Ising model, at 
a suitably chosen inverse temperature, as 
we will describe. 
The appearance of such an Ising model
becomes clear when the usual transformation between 
lattice gas variables and Ising variables 
is made for the internal variables.

Afterwards we  state the new result, Theorem \ref{thm: 2periodic}, which analogously to Theorem \ref{thm: Free measure} above for the free state describes the existence and further properties of a suitable HOV for this class of GGMs.

Let $\mathbb{Z}_2=\mathbb{Z}/2\mathbb{Z}=\{\bar{0},\bar{1}\}$. For any positive symmetric $Q \in \ell^1(\Z)$ define the associated \textit{fuzzy transfer operator} $\bar{Q}: \Z_2 \rightarrow (0,\infty)$ by setting 
\begin{equation}\label{eq: barQ def}
\bar{Q}(\bar{0}):=\sum_{l \in \Z}Q(2l) \text{ and } \bar{Q}(\bar{1}):=\sum_{l \in \Z}Q(1+2l).
\end{equation}    
The fuzzy transfer operator $\bar{Q}$ describes an Ising model in the following sense:
if we identify $\bar{0}$ with $-1$ and $\bar{1}$ with $+1$ then the formal Ising-Hamiltonian  
\[\mathcal{H}(\tilde{\omega})=-J\sum_{x \sim y}\tilde{\omega}_x\tilde{\omega}_y, \quad \tilde{\omega} \in \{\pm 1\}^V\]
is built from $\bar{Q}$ via \[\exp(2J)=\frac{\bar{Q}(\bar{0})}{\bar{Q}(\bar{1})}.\]
In the following we will write $\bar{a}:=a+2\Z$ to denote the projection of an integer $a$ to $\Z_2$.
Fix any Gibbs measure $\bar{\mu}$ on $\Z_2^V$ with spin-variables $(\bar{\sigma}_v)_{v \in V}$ for the Ising model described by $\bar{Q}$. Then we consider the following two-step procedure, where we first sample a configuration of $\mathbb{Z}_2$-valued spins (\textit{fuzzy chain}) which plays the role of a random environment.
Given a realization of such an environment we then sample edge-wise independently integer-valued increments conditionally on whether the fuzzy chain changes or preserves its value along the respective edge according to the kernel 
\begin{equation}
\rho^Q(s \mid \bar{a}):=\boldsymbol{1}_{a+2\Z}(s)\frac{Q(s)}{\bar{Q}(\bar{a})}, \quad s \in \mathbb{Z}, \ \bar{a} \in \{\bar{0},\bar{1} \}.
\end{equation}
\begin{defn}[2-height periodic GGM]\label{def: 2perGGM}
Let $\bar{\mu}$ be a Gibbs measure for the Ising model defined by $\bar{Q}$. Then we call the annealed measure $\nu^{\bar{\mu}}$ on $(\Omega^\nabla, \mathcal{F}^\nabla)$ with marginals
\begin{equation}\label{eq: def2perGGM}
\begin{split}
\nu^{\bar{\mu}}(\eta_{\Lambda \cup \partial \Lambda}=\zeta_{\Lambda \cup \partial \Lambda})=\frac{1}{Z_{\Lambda \cup \partial \Lambda}}\sum_{\bar{\omega}_{\Lambda \cup \partial \Lambda} \in \mathbb{Z}_2^{\Lambda \cup \partial \Lambda}}\bar{\mu}(\bar{\sigma}_{\Lambda \cup \partial \Lambda}=\bar{\omega}_{\Lambda \cup \partial \Lambda})
\prod_{(x,y) \in {}^\rho\Vec{E}_{\Lambda \cup \partial \Lambda}}\rho^Q(\zeta_{(x,y)} \mid \bar{\omega}_y-\bar{\omega}_x),
\end{split}
\end{equation}
where $\Lambda \Subset V$, the \textbf{gradient Gibbs measure of height period $2$ associated with the hidden Ising measure (or \textit{fuzzy chain}) $\bar{\mu}$}.

% In cases where we do not want to make fuzzy chain $\bar{\mu}$ explicit we will write $\nu^{per}$ instead.  
\end{defn}

The gradient measure $\nu^{\bar{\mu}}$ indeed satisfies the DLR-equation, for a proof see  \cite[Theorem 2]{HeKu23}, where we note that the construction does not impose any further assumption such as spatial homogeneity on the hidden Ising measure $\bar{\mu}$ and, even more, generalizes from the hidden Ising model to more general hidden clock models on $\mathbb{Z}_q$, $q \geq 2$.

Note that there is an appealing 
alternative view to the two-periodic GGM 
as coming from a mixture of branching random walks.  
Although we won't use this description further for the proofs 
in our current paper, let us give a brief informal description. 
In the branching random walk view
each infinite path on the tree takes the role of a time $\Z$, 
the random walks move in $\Z$, with transition probabilities 
which are two-periodic in the height direction. 
The process on the tree is then obtained by glueing 
all these paths together consistently. 
For more details see \cite{KS17}.

% For the Statement (b2) of the following result on pinning at infinity for 2-height periodic GGM we need to consider the moment generating function of the $\Z$-valued increment along an edge conditioned on the increment of the internal fuzzy chain along the edge.\\
Now the theorem reads:
\begin{thm}[Pinning at infinity for gradient Gibbs measures of height period $2$: existence, regularity of density, loss of translation invariance and loss of extremality]\label{thm: 2periodic}\mbox{}\\
Let $Q: \Z \rightarrow (0,\infty)$ be a positive symmetric transfer operator of gradient type such that $\sum_{j \in \N}j^2 Q(j) < \infty$.
Let $\bar{\mu}$ be any (not necessarily spatially homogeneous) tree-indexed Markov chain Gibbs measure for the Ising model with interaction described by $\bar{Q}$ (see Eq. \eqref{eq: barQ def}). Then the following holds true for the gradient Gibbs measure $\nu^{\bar{\mu}}$ of height-period $2$ associated to $\bar{\mu}$ on the $d$-regular tree with $d \geq 2$ via Definition \ref{def: 2perGGM}.
 \begin{itemize}
	\item[a)]\textbf{Existence.}\hypertarget{res: existence 2per}{}  Let $(H_r)_{r \in \mathbb{N}}$ be as in Definition \ref{def: SphericalAverage}. Then $H:=\lim_{r \rightarrow \infty}H_r$ is a HOV for the measure $\nu^{\bar{\mu}}$ where the limit holds almost surely and in $\mathcal{L}^2$ with respect to the same measure $\nu^{\bar{\mu}} \circ \left(\sigma^{\rho,0} \right)^{-1}$. In particular, the measure $\mu^H$ associated with $H$ and $\nu$ via Theorem \ref{thm: DLR} is a proper Gibbs measure on the space of height configurations. 
 \item[b1)]\textbf{Regularity of density of HOV.}\hypertarget{res: regularity density 2per}{} The corresponding distribution of $H^{\rho,0}$ obtained from Definition \ref{def: SphericalAverage}, under $\nu^{\bar{\mu}}$, has an infinitely often differentiable Lebesgue-density $f_{H^{\rho,0}}$. \\
 Moreover, $f_{H^{\rho,0}}$ is strictly positive on $\R$.
 \item[b2)]\textbf{Localization of single-site marginal of pinned measure.}\hypertarget{res: localization 2per}{} 
 Assume furthermore on the interaction $Q$ that there is some $R>0$ such that the function $t \mapsto \sum_{j \in \Z}\exp(tj)\frac{Q(j)}{\Vert Q \Vert_1}$ is finite for $t \in (-R,+R)$.
 Then, for every $A < (d+1)R$ there exists a $B(A) \in (0,\infty)$ such that the single site marginal at the root satisfies the following exponential decay:
 \[\mu^H(\vert \sigma_\rho \vert \geq s) \leq B(A)\exp(-As), \quad \text{for every } s \in \mathbb{N}.\]
  \item[c1)]\textbf{Peculiarity 1.}\hypertarget{res: peculiarity 1 2per}{} The lifted Gibbs measure $\mu^H$ is necessarily not invariant under the translations of the tree.
 \item[c2)]\textbf{Peculiarity 2.}\hypertarget{res: peculiarity 2 2per}{} The measure $\mu^H$
 % is not a tree-indexed Markov chain and in particular 
is not extreme in the set of all Gibbs measures for $Q$.
 \end{itemize}
\end{thm}
\section{Proofs}\label{sec: Proof}
We will prove each of the Statements (a), (b1), (b2), (c1) and (c2) of the Theorems \ref{thm: Free measure} and \ref{thm: 2periodic} in a separate subsection.
We begin with the first part of Statement (a) on almost sure convergence of the family $(H_r)_{r \in \mathbb{N}}$, which follows from identifying the process as a suitable $\mathcal{L}^2$-Martingale for a suitable filtration. Afterwards, we study the moment generating functions associated with $(H_r)_{r \in \mathbb{N}}$ to obtain Statement (b2) on localization of the single-site marginals of the measure $\mu^H$. Subsequently, we give the proof of the first part of Statement (b1) on absolute continuity of the distribution of the HOV $H$, which follows from considering the characteristic functions associated with $(H_r)_{r \in \mathbb{N}}$, and is thus merely a corollary of the previous calculations with moment generating functions (two-sided Laplace transforms).
We first present the more illustrative proof of Statement (c1) on non-invariance of the pinned measure under the translations of the tree. This is based on splitting the tree into the two sub-trees of the past and of the future of an edge and analyzing a reflection that changes the roles of infinite past and future.
This is followed by the proof of the second part of Statement (b1) which relies on another decomposition of the tree.
Finally, we conclude with a proof of Statement (c2) on violation of the Markov property and non-extremality. 
  
We present the respective steps for the free state and the 2-height periodic GGMs as much in parallel as possible to avoid redundancies in argumentation and to keep presentation short and concise.  
\subsection{Averages over spheres form a suitable $\mathcal{L}^2$-martingale}
To show that $\limsup_{r \rightarrow \infty} H_r$ is $\nu$-almost surely finite, in what follows, we consider the function $H_r^{\rho,0}$ (cf. Definition \ref{def: SphericalAverage}) which maps any gradient configuration $\zeta$ to the average height at the $r$-sphere, where we pin the height $0$ at the root $\rho$. 
\begin{pro}[Martingale construction for the free state]\label{pro: MartingaleFree}
Let $Q$ be a positive symmetric transfer operator with $\sum_{j=0}^\infty j^2 Q(j)<\infty$. Then for the free state $\nu^{free}$, the family $(H_r^{\rho,0})_{r \in \mathbb{N}}$ is an $\mathcal{L}^2(\nu^{free})$-martingale with respect to the filtration $(\mathcal{F}^\nabla_{B_r})_{r \in \mathbb{N}}$. Hence, $(H_r^{\rho,0})_{r \in \mathbb{N}}$ converges $\nu^{free}$-a.s. and in $\mathcal{L}^2(\nu^{free})$ to a limit $H^{\rho,0}$, which is $\nu^{free}$-a.s. finite.
\end{pro}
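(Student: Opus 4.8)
The plan is to recognise $(H_r^{\rho,0})_{r}$ as an $\mathcal L^2$-bounded martingale for the filtration $(\mathcal F^\nabla_{B_r})_{r}$ and then invoke Doob's convergence theorem. Throughout write $\nu=\nu^{free}$ and $p(j)=Q(j)/\Vert Q\Vert_1$; by Assumption \ref{ass: SquareIntegrability} this is a symmetric probability measure on $\Z$ with mean $0$ and finite variance $\sigma^2:=\sum_{j\in\Z}j^2 p(j)$, and under $\nu$ the increments $(\eta_{(x,y)})_{(x,y)\in{}^\rho\vec{E}}$ are i.i.d.\ with law $p$. The first step is the bookkeeping identity $\sigma_v^{\rho,0}(\zeta)=\sum_{b\in\Gamma(\rho,v)}\zeta_b$, which shows that $H_r^{\rho,0}$ depends only on increments of edges with both endpoints in $B_r$ — hence is $\mathcal F^\nabla_{B_r}$-measurable and square-integrable — and, after counting for each edge $b$ between generations $k-1$ and $k$ the $d^{r-k}$ vertices of $W_r$ whose geodesic from $\rho$ passes through $b$, gives the representation $H_r^{\rho,0}=\sum_{k=1}^{r}\frac{d^{r-k}}{|W_r|}\sum_{b\in\mathcal E_k}\eta_b$, with $\mathcal E_k$ the set of the $|W_k|$ edges from $W_{k-1}$ to $W_k$. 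This representation drives both remaining steps.

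\emph{Martingale identity.} Next I would split $W_{r+1}=\bigcup_{v\in W_r}S(v)$ and use $\sigma_w^{\rho,0}=\sigma_v^{\rho,0}+\eta_{(v,w)}$ for $w\in S(v)$ to write $H_{r+1}^{\rho,0}$ as $\frac{1}{|W_{r+1}|}\sum_{v\in W_r}\sum_{w\in S(v)}\sigma_v^{\rho,0}$ plus $\frac{1}{|W_{r+1}|}\sum_{v\in W_r}\sum_{w\in S(v)}\eta_{(v,w)}$. Because every $v\in W_r$ with $r\geq1$ has exactly $d$ children and $|W_{r+1}|=d\,|W_r|$, the first summand is precisely $H_r^{\rho,0}$, while the second is built from the increments on the edges joining $W_r$ to $W_{r+1}$; none of these edges lies in $\vec{E}_{B_r}$, so the product structure of $\nu$ makes them independent of $\mathcal F^\nabla_{B_r}$, and symmetry of $Q$ makes each mean zero. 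Taking $\mathbb E_\nu[\,\cdot\mid\mathcal F^\nabla_{B_r}]$ then yields $\mathbb E_\nu[H_{r+1}^{\rho,0}\mid\mathcal F^\nabla_{B_r}]=H_r^{\rho,0}$. The base case $r=0$ is handled separately: the root has $d+1$ children, $H_0^{\rho,0}\equiv 0$, and $\mathcal F^\nabla_{B_0}$ is trivial, so the identity reduces to $\mathbb E_\nu H_1^{\rho,0}=0$, which holds by symmetry.

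\emph{Uniform $\mathcal L^2$-bound and conclusion.} From the representation, together with independence, $\mathbb E_\nu\eta_b=0$, $\mathbb E_\nu\eta_b^2=\sigma^2$ and $|W_k|=(d+1)d^{k-1}$, I would compute $\operatorname{Var}_\nu(H_r^{\rho,0})=\sigma^2\sum_{k=1}^{r}\frac{d^{2(r-k)}|W_k|}{|W_r|^2}$, which collapses to the geometric sum $\frac{\sigma^2}{d+1}\sum_{k=1}^{r}d^{-(k-1)}\leq\frac{\sigma^2 d}{(d+1)(d-1)}$, finite and independent of $r$ since $d\geq 2$. Hence $(H_r^{\rho,0})_r$ is an $\mathcal L^2(\nu)$-bounded martingale, and Doob's $\mathcal L^2$-martingale convergence theorem provides a limit $H^{\rho,0}\in\mathcal L^2(\nu)$ with $H_r^{\rho,0}\to H^{\rho,0}$ both $\nu$-a.s.\ and in $\mathcal L^2(\nu)$; square-integrability forces $H^{\rho,0}$ to be $\nu$-a.s.\ finite.

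\emph{Where the difficulty lies.} There is no conceptual obstacle here: once the filtration $(\mathcal F^\nabla_{B_r})_r$ is identified, the argument is a textbook $\mathcal L^2$-martingale convergence. The points requiring care are purely combinatorial bookkeeping — handling the asymmetry between the root ($d+1$ neighbours) and every other vertex ($d$ children), which makes the case $r=0$ slightly special; verifying that the generation-$(r{+}1)$ increments are genuinely fresh, i.e.\ independent of $\mathcal F^\nabla_{B_r}$ under the product measure $\nu$; and checking that the variance sum telescopes to the displayed geometric series.
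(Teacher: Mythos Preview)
Your proof is correct and the martingale step is essentially the paper's: both split $H_{r+1}^{\rho,0}$ into $H_r^{\rho,0}$ plus the fresh mean-zero increments on the edges from $W_r$ to $W_{r+1}$.

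The $\mathcal L^2$-bound, however, is organised differently. You use the edge-wise representation $H_r^{\rho,0}=\sum_{k=1}^{r}\frac{d^{r-k}}{|W_r|}\sum_{b\in\mathcal E_k}\eta_b$ and sum the variances of independent summands directly, which immediately yields the geometric series $\frac{\sigma^2}{d+1}\sum_{k=1}^{r}d^{-(k-1)}$. The paper instead expands $\mathbb E[(H_r^{\rho,0})^2]$ as a double sum over pairs $v_1,v_2\in W_r$, identifies for each pair the vertex $u(v_2)$ at which the two geodesics from $\rho$ split, reduces $\mathbb E[\sigma_{v_1}^{\rho,0}\sigma_{v_2}^{\rho,0}]$ to $\mathbb E[(\sigma_{u(v_2)}^{\rho,0})^2]$ via the Markov property, and then counts the level sets of the splitting-point function. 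Both routes land on the same bound $\frac{\sigma^2 d}{d^2-1}$; yours is shorter and more transparent for the free state, where full independence of the increments is available. The paper's splitting-point computation, on the other hand, is phrased so that it only uses conditional independence given the value at the splitting vertex, which is exactly what survives in the hidden-Ising setting of Proposition~\ref{pro: MartingaleHiddenIsing} (there the increments are merely conditionally independent given the fuzzy chain, and the same combinatorics is reused verbatim).
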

\begin{proof}[Proof of Proposition \ref{pro: MartingaleFree}]
The family $(H_r^{\rho,0})_{r \in \mathbb{N}}$ is by construction adapted to the filtration $\mathcal{F}^\nabla_{B_r}$. To prove the martingale property, let $r \in \N$. Then linearity of the conditional expectation and the Markov property give $\nu^{free}$-a.s.
 \begin{equation} 
 \begin{split}\label{eq: Martingale property}
 \mathbb{E}[H^{\rho,0}_{r+1} \mid \mathcal{F}^\nabla_{B_r}]&=\frac{1}{\vert W_r \vert } \sum_{u \in W_r} \frac{1}{d}\sum_{v \in S(u)} \mathbb{E}[\sigma_v^{\rho,0} \mid   \mathcal{F}^\nabla_{B_r}] 
 = \frac{1}{\vert W_r \vert} \sum_{u \in W_r} \frac{1}{d}\sum_{v \in S(u)} \mathbb{E}[\sigma_v^{\rho,0} \mid   \sigma_u].
 \end{split}
 \end{equation}
 From the fact that $\nu(\sigma_x=i \mid \sigma_y=j)=\frac{Q(i-j)}{\Vert Q \Vert_1}$ and symmetry of $Q$ it follows that for every $u \in W_r$ and $v \in S(u)$
\begin{equation} \label{eq: condExp}
 \mathbb{E}[\sigma_v^{\rho,0} \mid   \sigma_{u}^{\rho,0}]=\sigma_u^{\rho,0} \quad \nu^{free}{\text-a.s.}
 \end{equation}
 Inserting this into \eqref{eq: Martingale property} shows the martingale property.
 
 In the second step, we show $\mathcal{L}^2(\nu^{free})$-boundedness of the martingale $(H_r^{\rho,0})_{r \in \N}$ to deduce from the martingale convergence theorem \cite[Corollary 11.11]{Kl20} that $(H_r^{\rho,0})_{r \in \N}$ converges $\nu^{free}$-a.s. to a real-valued limiting random variable.

 We have 
 \begin{equation}
 \begin{split}
 \mathbb{E}[(H_r^{\rho,0})^2]=\frac{1}{\vert W_r \vert^2}\sum_{u,v \in W_r}\mathbb{E}[\sigma_u^{\rho,0} \sigma_v^{\rho,0}]&=\frac{1}{\vert W_r \vert}\sum_{v_2 \in W_r}\mathbb{E}[\sigma_{v_1}^{\rho,0}\sigma_{v_2}^{\rho,0}],
 \end{split}
 \end{equation}
 where $v_1 \in W_r$ is any fixed vertex.
 For any $v_2 \in W_r$ the sets of vertices on the shortest paths from the root $\rho$ to $v_1$ and from $\rho$ to $v_2$ have a nonempty intersection (at least the root is contained in the intersection). 
 Let 
 \begin{equation}\label{eq: SplittingPointFree}
 u(v_2):=u_{v_1}(v_2)
 \end{equation}
 denote the vertex in the intersection which maximizes the distance to root, i.e., the point at which the two paths split and define the function $\mathbbm{u}: V \rightarrow \N_0$ by setting $\mathbbm{u}(v):=d(\rho,u_{v_1}(v))$.
Now, by conditional independence (from the Markov property) and \eqref{eq: condExp} we have for all $v_2 \in W_r$
\begin{equation}
\begin{split}
\mathbb{E}[\sigma_{v_1}^{\rho,0}\sigma_{v_2}^{\rho,0}]&=\mathbb{E}[\,\mathbb{E}[\sigma_{v_1}^{\rho,0}\sigma_{v_2}^{\rho,0} \mid \sigma_{u(v_2)}^{\rho,0}]\,]=\mathbb{E}[\,\mathbb{E}[\sigma_{v_1}^{\rho,0} \mid \sigma_{u(v_2)}^{\rho,0}]\cdot \mathbb{E}[\sigma_{v_2}^{\rho,0} \mid \sigma_{u(v_2)}^{\rho,0}]\,] =\mathbb{E}\big[(\sigma_{u(v_2)}^{\rho,0} )^2 \big].
\end{split}
\end{equation}
To calculate the second moment of the spin at site $u(v_2)$ let $Y_1, \ldots, Y_{\vert u(v_2) \vert }$ denote the increments along the edges of the path from $\rho$ to $u(v_2)$. As these are independent under the free measure $\nu^{free}$ we arrive at 
\begin{equation}
\mathbb{E}\bigg[\left(\sigma^{\rho,0}_{u(v_2)}\right)^2\bigg]=\mathbbm{u}(v_2) \, \mathbb{E}[Y_1^2].
\end{equation} 
Hence, 
\begin{equation}\label{eq: secMoment}
 \begin{split}
 \mathbb{E}[(H_r^{\rho,0})^2]&=\frac{1}{\vert W_r \vert}\mathbb{E}[Y_1^2]\sum_{v_2 \in W_r} \mathbbm{u}(v_2)=\mathbb{E}[Y_1^2]\sum_{n=1}^r n \, \frac{\vert \{ v_2 \in W_r \mid \mathbbm{u}(v_2)=n\} \vert}{\vert W_r \vert}.
 \end{split}
 \end{equation}
To calculate the sizes of the level sets of the function $\mathbbm{u}$ we proceed with the following iteration from the outside of the tree to the inside (see Figure \ref{fig: Recursion}): 
 $v_1$ is the only vertex in $W_r$ for which the function $\mathbbm{u}$ reaches its maximum $r$. Going over to the parent $\tilde{v}$ of $v_1$, we see that there are $d-1$ vertices (which are the children of $\tilde{v}$ except $v_1$) for which $\mathbbm{u}$ takes the value $r-1$. Iterating this argument (if $r \geq 3$) then means going over to the parent $\vardbtilde{v}$ of $\tilde{v}$ and counting the total offspring in $W_r$ of all of its children except $\tilde{v}$, which amounts to $(d-1)d$ vertices at which $\mathbbm{u}$ takes the value $r-2$.  Any further iteration (as long as we do not arrive at the root $\rho$) corresponds to a multiplication by $d$.
 Consequently 
 \begin{equation}\label{eq: SplitPoint}
  \vert \{ v_2 \in W_s \mid \mathbbm{u}(v_2)=n\} \vert=\begin{cases}
  	(d-1)d^{r-n-1}, \quad &\text{if } 1 \leq n \leq r-1;\\
  	  	1 , \quad &\text{if }  n=r. \\
  \end{cases}
\end{equation}
\begin{figure}
    \centering
    \includegraphics[width=0.35\linewidth]{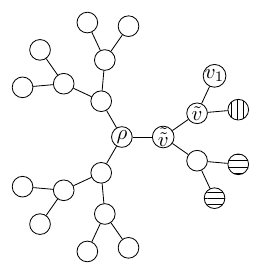}
    \caption{An illustration for the recursion leading to Eq. \eqref{eq: SplitPoint}. The graphics shows the ball $B_r$ of radius $r=3$ on the Cayley tree of order $d=2$ with some distinguished vertex $v_1 \in W_r$. The vertex $v_1$ is the unique vertex for which the function $\mathbbm{u}$ takes its maximal value $r$. The $d-1=1$ vertices filled with vertical lines are the vertices for which  $\mathbbm{u}$ takes the value $r-1$. The $d(d-1)$ vertices filled with horizontal lines are the vertices for which $\mathbbm{u}$ takes the value $r-2$. The recursion stops here, as $r-3=0$.}
    \label{fig: Recursion}
\end{figure}
In combination with $\vert W_r \vert=(d+1)d^{r-1}$ \eqref{eq: secMoment} hence becomes
\begin{equation}\label{eq: secMoment2}
 \mathbb{E}[(H_r^{\rho,0})^2]=\frac{d-1}{d+1}\mathbb{E}[Y_1^2]\sum_{n=1}^r nd^{-n}<\frac{d-1}{d+1}\mathbb{E}[Y_1^2]\sum_{n=1}^{\infty} nd^{-n}  =2\frac{d}{d^2-1}\frac{\sum_{j =1}^\infty j^2Q(j)}{\Vert Q \Vert_1},   
\end{equation}
where the r.h.s of \eqref{eq: secMoment2} is independent of $r$ and, by the assumption on $Q$, finite.
In particular, the martingale 
 $(H_r^{\rho,0})_{r \in \N}$ is $\mathcal{L}^2(\nu^{free})$-bounded and hence converges both a.s. and in $\mathcal{L}^2(\nu^{free})$. This concludes the proof of Proposition \ref{pro: MartingaleFree}.
  \end{proof}

We now come to the treatment of the height-periodic GGMs which is a little 
more complex.
While we are mostly interested in the annealed measure $\nu^{\bar \mu}$ \eqref{eq: def2perGGM}, 
it will be convenient for our further analysis to consider also 
the following \textit{joint infinite-volume 
measure} $\bar\nu^{\bar \mu}$ on fuzzy spins and integer-valued 
increments (that carries an additional bar in the notation) which by definition has finite-volume marginals
\begin{equation}\label{eq: 2perGGMProd}
\begin{split}
&\bar{\nu}^{\bar{\mu}}(
\bar{\sigma}_{\Lambda \cup \partial \Lambda}=\bar{\omega}_{\Lambda \cup \partial \Lambda},
\eta_{\Lambda \cup \partial \Lambda}=\zeta_{\Lambda \cup \partial \Lambda})\cr
&=\frac{1}{Z_{\Lambda \cup \partial \Lambda}}
\bar{\mu}(\bar{\sigma}_{\Lambda \cup \partial \Lambda}=\bar{\omega}_{\Lambda \cup \partial \Lambda})
\prod_{(x,y) \in {}^\rho\Vec{E}_{\Lambda \cup \partial \Lambda}}\rho^Q(\zeta_{(x,y)} \mid \bar{\omega}_y-\bar{\omega}_x).
\end{split}
\end{equation}
In case of 2-height periodic GGMs, we also need to consider a different filtration compared to the case 
of free states which contains information on both the gradient field and the internal fuzzy chain.
\begin{defn}   
We let
\begin{equation*}%\label{eq: 2perCondExp}
\overline{\mathcal{F}}^{\nabla}_{B_r}:=\sigma(\bar{\sigma}_x, \, \eta_b \mid x \in B_r, \, b \in \vec{E}_{B_r})
\end{equation*}
denote the sigma-algebra on $\Z_2^V \times\Omega^\nabla$ generated by the gradient field \textbf{and} the fuzzy chain inside $B_r$.
\end{defn}
\begin{pro}[Martingale construction for 2-height periodic GGM]\label{pro: MartingaleHiddenIsing}
Let $Q$ be a positive symmetric transfer operator with $\sum_{j=0}^\infty j^2 Q(j)<\infty$. Let $\bar{\mu}$ be any Gibbs measure for the Ising model described by the fuzzy transfer operator $\bar{Q}$. 

Then  the family $(H_r^{\rho,0})_{r \in \mathbb{N}}$ is an $\mathcal{L}^2(\bar{\nu}^{\bar{\mu}})$-martingale for the filtration $(\overline{\mathcal{F}}^{\nabla}_{B_r})_{r \in \mathbb{N}}$. In particular, it converges $\nu^{\bar{\mu}}$-almost surely and in $\mathcal{L}^2(\nu^{\bar{\mu}})$ to a limit $H^{\rho,0}$, which is $\nu^{\bar{\mu}}$-a.s. finite. 
\end{pro}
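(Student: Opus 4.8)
The plan is to reprove Proposition~\ref{pro: MartingaleFree} in the two-layer setting, changing essentially one thing: wherever the free-state argument used (conditional) independence of the edge increments, one first conditions on the \emph{whole} hidden fuzzy configuration $(\bar\sigma_x)_{x\in V}$, working on the natural coupling of the fuzzy chain with the gradient field. By Definition~\ref{def: 2perGGM}, given that configuration the increments $(\eta_b)_{b\in\vec E}$ are independent across edges, with $\eta_b$ distributed according to $\rho^Q(\cdot\mid\bar a)$, $\bar a$ being the fuzzy increment across $b$; and since $Q$ is symmetric and $-\bar a=\bar a$ in $\Z_2$, each kernel $\rho^Q(\cdot\mid\bar a)$ is symmetric about $0$, hence has mean zero and, by Assumption~\ref{ass: SquareIntegrability} together with $0<\bar Q(\bar a)\le\|Q\|_1<\infty$, finite second moment, bounded by $m_\star:=\max\bigl(\bar Q(\bar 0)^{-1}\sum_l(2l)^2Q(2l),\ \bar Q(\bar 1)^{-1}\sum_l(2l+1)^2Q(2l+1)\bigr)<\infty$. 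This replaces the free-state identity~\eqref{eq: condExp} and the single edge variance $\mathbb E[Y_1^2]$.

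First I would verify that $(H_r^{\rho,0})_r$ is an $\overline{\mathcal F}^\nabla_{B_r}$-martingale. Adaptedness is clear since $H_r^{\rho,0}$ is a function of $(\eta_b)_{b\in\vec E_{B_r}}$. For $r\ge 1$ one has $W_{r+1}=\bigsqcup_{u\in W_r}S(u)$ with $|W_{r+1}|=d|W_r|$, so $\mathbb E[H_{r+1}^{\rho,0}\mid\overline{\mathcal F}^\nabla_{B_r}]=\frac1{|W_r|}\sum_{u\in W_r}\frac1d\sum_{v\in S(u)}\mathbb E[\sigma_v^{\rho,0}\mid\overline{\mathcal F}^\nabla_{B_r}]$; writing $\sigma_v^{\rho,0}=\sigma_u^{\rho,0}+\eta_{(v,u)}$ and noting that $\sigma_u^{\rho,0}$ is $\overline{\mathcal F}^\nabla_{B_r}$-measurable reduces matters to $\mathbb E[\eta_{(v,u)}\mid\overline{\mathcal F}^\nabla_{B_r}]=0$. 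Since the edge $\{u,v\}$ lies outside $B_r$, conditioning further on $\sigma(\bar\sigma_x : x\in V)\vee\sigma(\eta_b : b\in\vec E_{B_r})$ makes $\eta_{(v,u)}$ independent of that $\sigma$-algebra except through $\bar\sigma_v-\bar\sigma_u$, so the inner conditional expectation is the mean of $\rho^Q(\cdot\mid\bar\sigma_v-\bar\sigma_u)$, namely $0$; the tower property then finishes it. The case $r=0$ is trivial, as $H_0^{\rho,0}=0$.

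Next I would establish $\mathcal L^2$-boundedness, mirroring \eqref{eq: secMoment2}. Expanding $\mathbb E[(H_r^{\rho,0})^2]=|W_r|^{-2}\sum_{u,v\in W_r}\mathbb E[\sigma_u^{\rho,0}\sigma_v^{\rho,0}]$, I would compute $\mathbb E[\sigma_u^{\rho,0}\sigma_v^{\rho,0}\mid\bar\sigma]$: expressing $\sigma_u^{\rho,0}$, $\sigma_v^{\rho,0}$ as sums of increments over $\Gamma(\rho,u)$, $\Gamma(\rho,v)$, which coincide exactly on $\Gamma(\rho,w)$ where $w=w(u,v)$ is the vertex at which the two paths split, conditional independence and vanishing conditional means kill every cross term and leave $\sum_{b\in\Gamma(\rho,w)}\mathbb E[\eta_b^2\mid\bar\sigma]\le m_\star\, d(\rho,w(u,v))$. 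This bound is uniform in $\bar\sigma$, hence in the (possibly inhomogeneous, non-Markovian) fuzzy law $\bar\mu$ — and that uniformity is precisely what lets the free-state computation go through unchanged. Substituting the level-set count \eqref{eq: SplitPoint} of the split-depth together with $|W_r|=(d+1)d^{r-1}$, the right-hand side collapses to the convergent geometric sum of \eqref{eq: secMoment2} with $\mathbb E[Y_1^2]$ replaced by $m_\star$, so $\sup_r\mathbb E[(H_r^{\rho,0})^2]<\infty$. An $\mathcal L^2$-bounded martingale converges a.s. and in $\mathcal L^2$ by \cite[Corollary~11.11]{Kl20}, yielding the finite limit $H^{\rho,0}\in\mathcal L^2(\nu^{\bar\mu})$.

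I expect the only genuine difficulty, compared with the free case, to be the bookkeeping around the conditioning: one must confirm that each variable frozen under $\overline{\mathcal F}^\nabla_{B_r}$ is indeed measurable with respect to it, and that the edges whose increments get integrated out sit strictly outside $B_r$, so that the conditional-independence-given-the-environment statement built into Definition~\ref{def: 2perGGM} is applicable. Once that is pinned down, the hidden Ising layer contributes nothing after conditioning on $(\bar\sigma_x)_{x\in V}$, and all the free-state estimates transfer essentially verbatim, with the two edge-types merely contributing two possible conditional variances, both controlled by $m_\star$.
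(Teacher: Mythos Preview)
Your proof is correct and follows essentially the same strategy as the paper: condition on the fuzzy environment to recover edgewise independence and symmetry, then recycle the free-state combinatorics \eqref{eq: SplitPoint}--\eqref{eq: secMoment2} with a uniform bound on the conditional edge variance. The one organizational difference is that you tower through the full fuzzy $\sigma$-algebra $\sigma(\bar\sigma_x:x\in V)$ to obtain $\mathbb E[\eta_{(v,u)}\mid\overline{\mathcal F}^\nabla_{B_r}]=0$, whereas the paper reduces the conditioning to $(\bar\sigma_u,\sigma_u^{\rho,0})$ via the Markov property of the fuzzy chain; your route is slightly cleaner in that it does not actually require $\bar\mu$ to be a tree-indexed Markov chain.
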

\begin{proof}[Proof of Proposition \ref{pro: MartingaleHiddenIsing}]
The family $(H_r^{\rho,0})_{r \in \mathbb{N}}$ is by construction already adapted to the coarser filtration $(\mathcal{F}_{B_r}^\nabla)_{r \in \N}$, hence it is also adapted to $(\overline{\mathcal{F}}^{\nabla}_{B_r})_{r \in \mathbb{N}}$.
By applying conditional independence of the children conditioned on their parent and the Markov property similarly to \eqref{eq: Martingale property}, the martingale property follows once we have shown that \begin{equation}\label{eq: 2perCondExp}
\bar{\nu}^{\bar{\mu}}(\sigma_v^{\rho,0}(\eta) \mid \bar{\sigma}_u, \, \sigma^{\rho,0}_u(\eta))=\sigma^{\rho,0}_u(\eta) \quad \bar{\nu}^{\bar{\mu}} \text{-a.s.}
\end{equation}
for all oriented edges $(u,v) \in \vec{E}$.
To prove \eqref{eq: 2perCondExp}, write
\begin{equation*}
\begin{split}
\bar{\nu}^{\bar{\mu}}(\sigma_v^{\rho,0}(\eta) \mid \bar{\sigma}_u, \, \sigma^{\rho,0}_u(\eta))&=\sigma^{\rho,s}_u(\eta)+\bar{\nu}^{\bar{\mu}}(\eta_{(u,v)} \mid \bar{\sigma}_u, \, \sigma^{\rho,0}_u(\eta))\cr 
&=\sigma_u^{\rho,0}(\eta)+\bar{\nu}^{\bar{\mu}}(\eta_{(u,v)} \mid \bar{\sigma}_u)  \quad \bar{\nu}^{\bar{\mu}} \text{-a.s.},
\end{split}
\end{equation*}
where the second equality follows from the fact that $(\bar{\sigma}_x)_{x \in V}$ is a Markov chain and $\eta$ is obtained by an edge-wise independent resampling given any realization of  $(\bar{\sigma}_x)_{x \in V}$.
By construction of the measure $\bar{\nu}^{\bar{\mu}}$ (recall \eqref{eq: 2perGGMProd}) we have that \begin{equation}\label{eq: plusminussymmetry}\bar{\nu}^{\bar{\mu}}(\eta_{(u,v)}=k \mid \bar{\sigma}_u=\bar{a})=\bar{\mu}(\bar{\sigma}_v=\bar{k}+\bar{a} \mid \bar{\sigma}_u=\bar{a})\frac{Q(k)}{\bar{Q}(\bar{k})}
% =\frac{Q(k)\lambda_{(v,u)}(\bar{k}-\bar{a})}{\sum_{j \in \Z}Q(j)\lambda_{(v,u)}(\bar{j}-\bar{a})}
 \text{ for all } k \in \Z \text{ and }\bar{a} \in \Z_2.   
\end{equation} As $Q$ is symmetric and $\overline{(-k)}=\bar{k}$ for every $k \in \Z$ this implies
\begin{equation}\label{eq: 2perCentred}
\bar{\nu}^{\bar{\mu}}(\eta_{(u,v)} \mid \bar{\sigma}_u=\bar{0})=\bar{\nu}^{\bar{\mu}}(\eta_{(u,v)} \mid \bar{\sigma}_u=\bar{1})=0.
\end{equation}
This proves \eqref{eq: 2perCondExp}.
Next, we will prove $\mathcal{L}^2$-boundedness of the martingale $(H_r)_{r \in \N}$. First, by the Assumption \ref{ass: SquareIntegrability} on $Q$ there is a $C<\infty$ such that for any edge $(u,v) \in \vec{E}$ and any $\bar{a} \in \Z_2$ we have
\begin{equation}\label{eq: UniformC}
\bar{\nu}^{\bar{\mu}}(\eta_{(u,v)}^2 \mid \bar{\sigma}_v-\bar{\sigma}_u=\bar{a})=\frac{\sum_{j \in \bar{a}+2\Z}j^2Q(j)}{\sum_{k \in \bar{a}+2\Z}Q(k)}<C.	
\end{equation}
Now, for all $r \in \N$ and $\bar{\mu}$-almost all $\bar{\omega} \in \Z_2^V$
\begin{equation}
\begin{split}
\bar{\nu}^{\bar{\mu}}((H_r^{\rho,0})^2 \mid \bar{\sigma}=\bar{\omega})&=\frac{1}{\vert W_r \vert^2}\bar{\nu}^{\bar{\mu}}\bigg( \bigl(\sum_{v \in W_r}\sum_{b \in \Gamma(\rho,v)}\eta_b \bigr)^2 \, \bigg \vert  \,\bar{\sigma}=\bar{\omega}\bigg) \cr&=\frac{1}{\vert W_r \vert^2}\sum_{v_1,v_2 \in W_r}\sum_{\genfrac{}{}{0pt}{}{b_1 \in \Gamma(\rho,v_1)}{b_2 \in \Gamma(\rho,v_2)}}\bar{\nu}^{\bar{\mu}}(\eta_{b_1}\eta_{b_2} \mid \bar{\sigma}=\bar{\omega})
\end{split}
\end{equation}
By \eqref{eq: 2perCentred}, the gradient variables $(\eta_b)_{b \in \vec{E}}$ are centered and by construction conditionally independent given any realization of the internal field $(\bar{\sigma}_x)_{x \in V}$. Hence, fix $v_1$ in $W_r$ and as in \eqref{eq: SplittingPointFree} let $u(v_2)$ denote the vertex at which the paths from $\rho$ to $v_1$ and from $\rho$ to $v_2$ split with $\mathbbm{u}(v_2)$ denoting the respective distance between $u(v_2)$ and the root to arrive at the following analogue of \eqref{eq: secMoment} 
\begin{equation}\label{eq: L2Z2Bound}
\begin{split}
\bar{\nu}^{\bar{\mu}}((H_r^{\rho,0})^2 \mid \bar{\sigma}=\bar{\omega})&=\frac{1}{\vert W_r \vert} \sum_{v_2 \in W_r}\sum_{b \in \Gamma(\rho,u(v_2))}\bar{\nu}^{\bar{\mu}}(\eta_{b}^2 \mid \bar{\sigma}=\bar{\omega})\cr
&\leq C \frac{1}{\vert W_r \vert}\sum_{n=1}^r n \, \vert \{v_2 \in W_r \mid \mathbbm{u}(v_2)=n\}\vert.
\end{split}
\end{equation}	
This is an upper bound in terms of the same
geometric quantities we have encountered 
before in the proof for the free state. 
By employing \eqref{eq: SplitPoint}, similarly to \eqref{eq: secMoment2} we obtain an upper bound on $\bar{\nu}^{\bar{\mu}}(H_r^{\rho,0})^2 \mid \bar{\sigma}=\bar{\omega})$ which is uniform in $r$ and $\bar{\omega}$. Hence, we have verified that we have an $\mathcal{L}^2({\bar{\nu}^{\bar{\mu}}})$-martingale. Hence, $(H_r)_{r \in \N}^{\rho,0}$ converges in $\mathcal{L}^2({\bar{\nu}^{\bar{\mu}}})$ and $\bar{\nu}^{\bar{\mu}}$-a.s. As $(H_r^{\rho,0})_{r \in \N}$ is already measurable with respect to the gradient variables alone, 
its almost sure limit must also be measurable w.r.t. the gradient variables alone
(and does not carry any dependence on the internal spins which were appearing the filtration).  
Hence the convergence results also hold for the annealed measure $\nu^{\bar{\mu}}$.
% By \eqref{eq: SplitPoint} the last expression in \eqref{eq: L2Z2Bound} converges to a finite real number as $r$ tends to infinity. 
\end{proof}
\begin{rk}[Extensions of Propositions \ref{pro: MartingaleFree} and \ref{pro: MartingaleHiddenIsing}]\label{rk: ExtensionsPro12}\mbox{}\\
\begin{enumerate}[a)]
\item The statements of Propositions \ref{pro: MartingaleFree} and \ref{pro: MartingaleHiddenIsing} remain true if we replace the Cayley tree of order $d$ by the \textbf{regular $d$-ary} tree, which is the rooted tree where the root has $d$ nearest neighbors and every other vertex has $d+1$ nearest neighbors.  
\item The statements of Proposition \ref{pro: MartingaleHiddenIsing} remain true if we prescribe any \textit{mod}-2 fuzzy configuration $\bar{\omega} \in \{\bar{0},\bar{1}\}^{\mathbb{T}_d}$ and consider the conditional distribution $\bar{\nu}^{\bar{\mu}}(\cdot \mid \bar{\sigma}=\bar{\omega})$ of gradient spin variables instead of the unconditioned one. This follows from the symmetry \eqref{eq: 2perCentred} and the uniform bound \eqref{eq: UniformC}.
\end{enumerate}
\end{rk}
\begin{rk}[Concerning higher periods $q$]
In the case $q=2$ conditioning on the non-observable internal fuzzy spin preserves the $\pm$-symmetry (see \eqref{eq: 2perCentred}), and thus leads to the martingale property of $(H_r^{\rho,0})_{r \in \N}$ even conditional on the internal spins. This is in general no longer true for higher $q$.
\end{rk}
\subsection{Exponential upper bound on single-site marginals via moment generating function}
In the following, we give an exponential upper bound on the tail-distribution of the HOV $H$ for the free measure provided that the moment generating function of a single-gradient variable $\eta_b$:
    \[\hat{\psi}^{free}(t):=\mathbb{E}\big [\exp(t \eta_b ) \, \big ]=\sum_{j \in \Z}\exp(tj)\frac{Q(j)}{\Vert Q \Vert_1}\]
exists on an open interval $(-R,+R)$ around zero.
A similar upper bound also holds true for GGMs of height-period 2 when we replace $\hat{\psi}^{free}$ by $\hat{\psi}^{\bar{\mu}}_\star$ 
defined by 
\begin{equation}\label{eq: UB2perMomentGenerating}
\hat{\psi}^{\bar{\mu}}_\star(t):=\max_{\bar{a} \in \{ \bar{0},\bar{1}\}} \big \vert \, \bar{\nu}^{\bar{\mu}}(\exp (t \eta_{(v,w)}) \mid \bar{\sigma}_w-\bar{\sigma}_v=\bar{a}) \vert. 
\end{equation}
Note that the finiteness of $\hat{\psi}^{free}(t)$ for a given $t$ implies the finiteness of $\hat{\psi}^{\bar{\mu}}_\star(t)$. 
\begin{pro}[Exponential bound on single-site marginals]\label{thm: UB singlesitemarginals}
Let $\nu$ be the free measure (or a GGM of height-period $2$ associated with some fuzzy chain $\bar{\mu}$, resp.)
Assume that the moment generating function $\hat{\psi}^{free}$ (or the function $\hat{\psi}^{\bar{\mu}}_\star$ of \eqref{eq: UB2perMomentGenerating}, resp.) exists on the open interval $(-R,+R)$ with $R>0$. 

Then, for all $A<(d+1)R$ there exists a constant $B(A)<\infty$ such that 
\begin{equation}
\mu^H(\vert \sigma_\rho \vert 
 \geq s+1) \leq \nu(\vert H^{\rho,0} \vert \geq s ) \leq B(A)e^{-A s} \quad \text{for all } s\geq 0.
\end{equation}
with equality in case of $s \in \N$.
\end{pro}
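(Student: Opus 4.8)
The plan is to obtain the exponential tail bound by a standard Chernoff/Markov argument applied to the martingale limit $H^{\rho,0}$, but the crucial point is to control the moment generating function of $H^{\rho,0}$ uniformly via the recursive tree structure. First I would record the identity relating the single-site marginal of $\mu^H$ to the distribution of $H^{\rho,0}$ under $\nu$: by \eqref{eq: SingSiteMarginals} we have $\mu^H(\sigma_\rho = t) = \nu(\lfloor H^{\rho,0}\rfloor = -t)$, which immediately gives $\mu^H(|\sigma_\rho| \ge s+1) \le \nu(|H^{\rho,0}| \ge s)$ with equality for integer $s$ (the floor shifts the event by at most one). So the whole statement reduces to proving $\nu(|H^{\rho,0}|\ge s) \le B(A)e^{-As}$ for $A < (d+1)R$.

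For the tail bound, the idea is to show that the moment generating function $\Phi(t) := \mathbb{E}_\nu[\exp(t H^{\rho,0})]$ is finite for all $|t| < (d+1)R$, and then apply the exponential Markov inequality $\nu(H^{\rho,0}\ge s) \le \Phi(t)e^{-ts}$, optimizing $t$ close to $A/(d+1)\cdot(d+1) = A$ — more precisely, for any $A < (d+1)R$ pick $t$ with $A \le t < (d+1)R$, so $B(A) = \Phi(t)$ works; the lower tail is symmetric since $Q$ is symmetric. To get finiteness of $\Phi$, I would pass through the finite-$r$ approximations: $H^{\rho,0}_r = \frac{1}{|W_r|}\sum_{v\in W_r}\sum_{b\in\Gamma(\rho,v)}\eta_b$. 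The key structural fact is that, by the recursive self-similarity of the tree, the MGF of $H^{\rho,0}_r$ factorizes (or is bounded by a factorization) as an infinite-type product over levels: each gradient variable $\eta_b$ at distance $n$ from the root contributes with weight $|\{v \in W_r : b \in \Gamma(\rho,v)\}|/|W_r|$, which is of order $d^{-n}$. Concretely $\sum_{b \in \Gamma(\rho,v)}\eta_b$ for $v\in W_r$ is a sum of $r$ independent increments (free case) or conditionally independent increments (period-2 case), and averaging over $v\in W_r$ using the splitting-vertex count \eqref{eq: SplitPoint} gives, after passing to the limit, a product $\prod_{n\ge 1}(\text{something involving }\hat\psi(t\cdot c_n))$ with $\sum_n c_n < \infty$ and each $c_n \le$ (const)$\cdot d^{-n}$. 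Here I would invoke the single-edge-versus-tree reduction (Lemma \ref{pro: singleEdge vs Tree}) alluded to earlier: the MGF of $H^{\rho,0}$ is an infinite product of rescaled copies of $\hat\psi^{free}$ (resp.\ bounded by such a product built from $\hat\psi^{per}_\star$ in the period-2 case). Since each factor evaluates $\hat\psi$ at an argument shrinking geometrically, all but finitely many factors have argument in $(-R,R)$ and are close to $1$; finiteness of the product then follows from $\sum_n c_n < \infty$ together with the local boundedness of $\hat\psi$ near $0$, and the product is finite precisely when $t$ stays within the enlarged radius $(d+1)R$ (the factor $d+1$ coming from $\sum_{n\ge 1} d^{-n+1}\cdot\frac{d-1}{d+1}$-type normalization, i.e.\ from $|W_r| = (d+1)d^{r-1}$).

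In the period-2 case I would first condition on the fuzzy chain $\bar\sigma$: by Remark \ref{rk: ExtensionsPro12}(b) and the symmetry \eqref{eq: 2perCentred}, the conditional MGF $\mathbb{E}[\exp(tH^{\rho,0}_r)\mid\bar\sigma=\bar\omega]$ admits the same product-type bound with each factor replaced by $\hat\psi^{per}_\star$ evaluated at the same geometrically shrinking arguments — uniformly in $\bar\omega$ — and then integrate over $\bar\mu$; the uniform bound \eqref{eq: UniformC} and the definition of $\hat\psi^{per}_\star$ as a max over $\bar a\in\{\bar 0,\bar 1\}$ are exactly what make this work. Passing to the limit $r\to\infty$ requires justifying that $\mathbb{E}[\exp(tH^{\rho,0}_r)] \to \mathbb{E}[\exp(tH^{\rho,0})]$; since the $r$-level MGFs are uniformly bounded on a neighborhood of any $t$ with $|t| < (d+1)R$ and $H^{\rho,0}_r \to H^{\rho,0}$ a.s., uniform integrability of $(\exp(t H^{\rho,0}_r))_r$ follows from uniform boundedness of a slightly larger exponential moment, giving the convergence. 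The main obstacle I anticipate is precisely this last interchange of limit and exponential moment — ensuring the product representation holds not just for the limit but uniformly along the sequence so that Fatou/uniform-integrability applies — and, in the period-2 case, keeping all the bounds uniform in the hidden configuration $\bar\omega$ before integrating it out; once the product representation from Lemma \ref{pro: singleEdge vs Tree} is in hand, the rest is a routine Chernoff estimate.
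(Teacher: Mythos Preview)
Your proposal is correct and follows essentially the same route as the paper: reduce to a tail bound on $H^{\rho,0}$ via \eqref{eq: SingSiteMarginals}, invoke the product representation of Lemma~\ref{pro: singleEdge vs Tree}, show the moment generating function of $H^{\rho,0}$ is finite on $(-(d+1)R,(d+1)R)$, and conclude by the exponential Markov inequality, with the period-2 case handled by conditioning on the fuzzy chain and bounding uniformly via $\hat\psi^{per}_\star$. The only minor difference is that the paper establishes finiteness of the limiting moment generating function through an explicit cumulant expansion (Lemma~\ref{pro: RangeOfConvergence}), which also yields the exact cumulant relation $\kappa_n^{free,\mathbb{T}^d}=\kappa_n^{free}\frac{d^{n-1}}{(d^{n-1}-1)(d+1)^{n-1}}$, whereas you argue via geometric decay of the arguments in the product and uniform integrability; both are valid and lead to the same enlarged radius $(d+1)R$.
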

For the proof of Proposition \ref{thm: UB singlesitemarginals} we will first relate the moment generating function $\hat{\varphi^{\nu}_r}$ of $H_r^{\rho,0}$, $ r \in \N$, defined by
\begin{equation}
\hat{\varphi}^{\nu}_r(t)=\mathbb{E}\left[\exp\left(\frac{t}{\vert W_r \vert} \sum_{v \in W_r} \sigma_v^{\rho,0} \right)\right], 
\end{equation}
where $\nu$ is the free-state (a GGM of height-period $2$, resp.)
% where $D_{\hat{\varphi}_r}^\nu$ is the range of convergence,
to $\hat{\psi}^{free}$ (to $\hat{\psi}^{\bar{\mu}}_\star$ in case of 2-height periodic GGMs).
We will write $\hat{\varphi}_r^{free}$ when the expectation is taken with respect to the free measure and $\hat{\varphi}_r^{\bar{\mu}}$ when the expectation is taken with respect to $\nu^{\bar{\mu}}$.
\begin{lemma}[Relations for integral transforms]\label{pro: singleEdge vs Tree}
The following relation holds true for the moment generating function $\hat{\varphi}^{\nu}_r$ and any $t$ in the interval $(-R,R)$, where $\hat{\psi}^{free}(t)$ exists as a finite number.
\begin{enumerate}[a)]
    \item In case of $\nu$ being the free measure,
\[\hat{\varphi}^{free}_r(t)=\prod_{l=0}^{r-1} \hat{\psi}^{free}\left(\frac{t}{(d+1)d^{l}}\right)^{(d+1)d^l}.\]
    \item For the $\nu$ being a GGM of height-period $2$ with associated fuzzy chain $\bar{\mu}$, for real $t$ we have
    \[ \hat{\varphi}_r^{\bar{\mu}}(t)  \leq \prod_{l=0}^{r-1} \hat{\psi}_\star^{\bar{\mu}}\left(\frac{t}{(d+1)d^{l}}\right)^{(d+1)d^l}.\]
\end{enumerate}
Note that the Statements (a) and (b) remain true if we replace the moment generating functions by the respective characteristic functions, i.e., substitute $t=is$ for any $s \in \R$, and consider absolute values in (b).
\end{lemma}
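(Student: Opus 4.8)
The plan is to exploit the tree-recursive structure of the sum $\sum_{v \in W_r} \sigma_v^{\rho,0}$ to factorize its moment generating function over the $d+1$ (or $d$ in the $d$-ary case) subtrees emanating from the root, and then to iterate. First I would observe that, pinning the root at height $0$, for each child $w$ of $\rho$ the contribution of the vertices in $W_r$ lying below $w$ is $\eta_{(\rho,w)} \cdot |W_{r-1}'| + (\text{sum of pinned heights at level } r-1 \text{ in the subtree rooted at } w)$, where the subtree rooted at $w$ is itself a $d$-ary tree. Writing $N_r := |W_r|$ (so $N_r = (d+1)d^{r-1}$ on the Cayley tree), the key identity is that, conditionally on the increments $\eta_{(\rho,w)}$, the $d+1$ subtree-sums are independent, and each subtree-sum has the same law as $N_{r-1}^{(d\text{-ary})} \cdot \eta + (\text{level-}(r-1)\text{ sum in a }d\text{-ary tree})$. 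Taking expectations of $\exp\bigl(\tfrac{t}{N_r}(\cdot)\bigr)$ and using independence across subtrees turns the generating function into a product of $d+1$ identical factors, each of which is the generating function of a single edge-increment at argument $t \cdot \frac{N_{r-1}^{(d\text{-ary})}}{N_r} = \frac{t}{d+1}$ multiplied by the level-$(r-1)$ generating function of a $d$-ary subtree at the rescaled argument. Iterating this one level at a time, the cumulative rescaling after $l$ steps is by $\frac{1}{(d+1)d^{l-1}} \cdot \frac{1}{d}$ type factors, producing exactly the exponents $(d+1)d^l$ and arguments $\frac{t}{(d+1)d^l}$ in the stated product; the induction terminates because at level $0$ the pinned height at the root is deterministically $0$, contributing a factor $1$.

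For part (a), the free state, all increments $\eta_b$ are genuinely i.i.d.\ with marginal $Q/\|Q\|_1$, so the conditioning is vacuous and every factor is exactly $\hat{\psi}^{free}$ evaluated at the appropriate argument; the identity is an equality, valid for $t \in D_{\hat\psi^{free}}$ since all the rescaled arguments $\frac{t}{(d+1)d^l}$ are then also in the domain (the map $t \mapsto t/c$ for $c \ge 1$ preserves the interval $(-R,R)$). For part (b), the hidden Ising case, the increments are only conditionally independent given the fuzzy chain $\bar\sigma$, and the conditional law of $\eta_{(u,v)}$ depends on $\bar\sigma_v - \bar\sigma_u$; this is where equality degrades to an inequality. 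Here I would first condition on the entire fuzzy configuration $\bar\omega$ (using Remark \ref{rk: ExtensionsPro12}(b) to make sense of this), so that within each subtree the increments become independent; the subtree generating function then factorizes as before but each edge-factor is $\mathbb{E}[\exp(t\eta_b) \mid \bar\sigma_v - \bar\sigma_u = \bar a]$ for the actual realized $\bar a$, which is bounded in absolute value by $\hat\psi^{per}_\star(t)$ by Definition \ref{def: UB2perMomentGenerating}. Bounding each such factor by $\hat\psi^{per}_\star$ at the corresponding rescaled argument and then integrating over $\bar\mu$ yields the claimed product upper bound; for real $t$ the factors $\hat\psi^{per}_\star\bigl(\tfrac{t}{(d+1)d^l}\bigr)$ are nonnegative (being maxima of expectations of $\exp$ of a real variable), so the product bound is preserved under the outer integral. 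For the characteristic-function version, substitute $t = is$; then $|\mathbb{E}[\exp(is\eta_b) \mid \cdot]| \le \hat\psi^{per}_\star(is)$ by the symmetry noted after Definition \ref{def: UB2perMomentGenerating}, and the triangle inequality applied to the product over subtrees propagates the bound through the recursion exactly as in the real case, with absolute values taken throughout.

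The main obstacle I anticipate is bookkeeping the recursion cleanly so that the exponents and arguments come out to be precisely $(d+1)d^l$ and $\tfrac{t}{(d+1)d^l}$ rather than something off by a factor: the subtlety is that the root vertex has $d+1$ children on the Cayley tree but every internal vertex of a subtree has only $d$ children (the subtrees are $d$-ary, not Cayley), so the first recursion step behaves differently from the rest. The cleanest route is probably to state and prove an auxiliary identity for the $d$-ary tree first --- giving $\hat\varphi^{d\text{-ary}}_r(t) = \prod_{l=0}^{r-1}\hat\psi(\tfrac{t}{d^{l+1}})^{d^{l+1}}$ or similar --- and then combine $d+1$ copies of it for the Cayley-tree root level; this is exactly the kind of situation for which Remark \ref{rk: ExtensionsPro12}(a) was recorded, so invoking it keeps the argument short. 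Everything else --- the conditional independence, the rescaling of the generating-function argument by the ratio of sphere sizes, and the passage from equality to inequality --- is routine once the combinatorial skeleton is fixed.
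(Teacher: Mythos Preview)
Your proposal is correct, but it takes a different recursive direction from the paper, and the paper's route is shorter. You decompose at the root into $d+1$ copies of a $d$-ary subtree, which forces you to first establish an auxiliary product formula for the $d$-ary tree and then glue $d+1$ copies together; you correctly flag this Cayley/$d$-ary mismatch as the main bookkeeping obstacle. The paper instead runs the induction \emph{outward}, from radius $r$ to radius $r+1$: writing $\sum_{v\in W_{r+1}}\sigma_v^{\rho,0}=\sum_{u\in W_r}\sum_{w\in S(u)}(\sigma_u^{\rho,0}+\eta_{(u,w)})$ and using that every $u\in W_r$ has exactly $d$ children (regardless of whether one is at the root or not) gives directly
\[
\hat\varphi^{free}_{r+1}(t)=\hat\varphi^{free}_{r}(t)\cdot\hat\psi^{free}\!\left(\tfrac{t}{(d+1)d^{r}}\right)^{(d+1)d^{r}},
\]
so the Cayley/$d$-ary distinction never arises and no auxiliary lemma is needed. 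For part (b) your plan---condition on the fuzzy field, use conditional independence edge by edge, bound each edge factor by $\hat\psi^{per}_\star$, then integrate out $\bar\mu$---is exactly what the paper does (it conditions on $\overline{\mathcal F}_{B_{r+1}}$ and carries the induction through with absolute values). So both arguments are sound; the paper's buys simplicity, while yours makes the self-similarity at the root more explicit at the cost of the extra $d$-ary step.
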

\begin{proof}{Proof of Lemma \ref{pro: singleEdge vs Tree}.}
We start with considering the case when $\nu$ is the free measure. The proof is done by induction on $r \in \N$.
For $r=1$ we have \begin{equation}\label{eq: indBeg}
\hat{\varphi}_1^{free}(t)=\mathbb{E}[\exp(\frac{t}{d+1}\sum_{v \in W_1} \eta_{(\rho,v)})]=\hat{\psi}^{free}\left(\frac{t}{d+1} \right)^{d+1},    
\end{equation}
which follows from the fact that we condition the root to have height $0$ and independence of the gradient spin variables.
For the induction step write 
\begin{equation}\label{eq: recursionMomentfree}
\begin{split}
\hat{\varphi}_{r+1}^{free}(t)&=\mathbb{E}\bigg[\exp(\frac{t}{\vert W_{r+1} \vert} \sum_{v \in W_{r+1}} \sigma_v^{\rho,0})\bigg] = \mathbb{E}\bigg[\exp \bigg(\frac{t}{\vert W_{r+1} \vert} \sum_{v \in W_r} \sum_{w \in S(v)} (\sigma_v^{\rho,0}+\eta_{(v,w)})\bigg) \bigg].
\end{split}
\end{equation}
The gradient spin variables for the free measure are i.i.d., hence we arrive at
\begin{equation}
\begin{split}
\hat{\varphi}_{r+1}^{free}(t)&=\mathbb{E}[\exp (\frac{t}{d\vert W_{r} \vert} \sum_{v \in W_r} d \, \sigma_v^{\rho,0} )] \cdot \prod_{\genfrac{}{}{0pt}{}{v \in W_r}{w \in S(v)}}\mathbb{E}[\exp (\frac{t}{\vert W_{r+1} \vert}  \eta_{(v,w)}) ] \cr 
&=\hat{\varphi}_{r}^{free}(t) \cdot\hat{\psi}^{free}\left(\frac{t}{\vert W_{r+1} \vert}\right)^{\vert W_{r+1} \vert}=\hat{\varphi}_{r}^{free}(t) \cdot \hat{\psi}^{free}\left(\frac{t}{(d+1)d^{r}}\right)^{(d+1)d^{r}}    
\end{split}    
\end{equation}
This proves Statement a).

For the case of 2-height periodic GGMs, 
we will condition on the sigma-algebra
\begin{equation}
\overline{\mathcal{F}}_{B_r}:=\sigma(\bar{\sigma}_v \mid v \in B_r)    
\end{equation}
which is generated by the random environment, i.e., the fuzzy spins, within the ball of radius $r$ around the origin.
We show by induction on $r \geq 1$ that
\[ \big \vert \, \bar{\nu}^{\bar{\mu}}\big(\exp(\frac{t}{\vert W_{r} \vert} \sum_{v \in W_{r}} \sigma_v^{\rho,0}) \mid \overline{\mathcal{F}}_{B_{r+1}} \big) \, \big \vert\leq \prod_{l=0}^{r-1} \hat{\psi}_\star^{\bar{\mu}}\left(\frac{t}{(d+1)d^{l}}\right)^{(d+1)d^l} , \]
where we consider absolute values to extend the result to the case of the characteristic function.
Then Statement b) follows from total expectation, i.e., integration out w.r.t. hidden Ising measure $\bar{\mu}$ in combination with the triangle inequality for conditional expectations.
The induction beginning $r=1$ is completely analogous to \eqref{eq: indBeg} employing conditional independence of the gradient spin variables.
For the induction step we write
\begin{equation}\label{eq: treeRec2per}
\begin{split}
&\bigg \vert \, \bar{\nu}^{\bar{\mu}}\bigg(\exp \bigg(\frac{t}{\vert W_{r+1} \vert} \sum_{v \in W_r} \sum_{w \in S(v)} (\sigma_v^{\rho,0}+\eta_{(v,w)})\bigg) 
 \ \bigg \vert \ \overline{\mathcal{F}}_{B_{r+2}}\bigg) \, \bigg \vert \cr
&= \bigg\vert \, \bar{\nu}^{\bar{\mu}}\bigg(\exp \bigg(\frac{t}{d\vert W_{r} \vert} \sum_{v \in W_r} d \sigma_v^{\rho,0} \bigg) \bigg \vert \, \overline{\mathcal{F}}_{B_{r+2}}\bigg) \, \bigg\vert \cdot \prod_{\genfrac{}{}{0pt}{}{v \in W_r}{w \in S(v)}}\bigg\vert \bar{\nu}^{\bar{\mu}}\bigg(\exp \bigg(\frac{t}{\vert W_{r+1} \vert}  \eta_{(v,w)} \bigg) \, \bigg \vert \, \overline{\mathcal{F}}_{B_{r+2}} \bigg)\bigg\vert. 
\end{split}
\end{equation}
Hence, employing conditional independence of the gradient spins given the mod-2 increments along the respective edges 
we arrive at the upper bound $\prod_{l=0}^{r-1} \hat{\psi}_\star^{\bar{\mu}}\left(\frac{t}{(d+1)d^{l}}\right)^{(d+1)d^l}$ for   \eqref{eq: treeRec2per}.
This completes the induction step and thus completes the proof of Lemma \ref{pro: singleEdge vs Tree}.
\end{proof}
With Lemma \ref{pro: singleEdge vs Tree} in mind, we can now describe the domain of convergence of the moment generating function of the HOV $H$. 

\begin{lemma}[Range of convergence for the moment generating function of $H$ and its cumulants]\label{pro: RangeOfConvergence}\mbox{}\\
\begin{enumerate}[a)]
    \item Let $\nu$ be the free measure (a GGM of height-period $2$, resp.) Assume that $\hat{\psi}^{free}(t)$ ($\hat{\psi}^{\bar{\mu}}_\star(t)$, resp.)
converges for real $t \in (-R,+R)$. Then the following holds true. The moment generating function  $\hat{\varphi}_\infty^\nu$ of $H^{\rho,0}$ converges for real $t \in \left(-(d+1)R,+(d+1)R\right)$ and is given as the pointwise limit of the family $(\hat{\varphi}_r^\nu)_{r \geq 1}$. 
 \item In the specific case of the free state let $\kappa_n^{free,\mathbb{T}^d}$ denote the $n$th cumulant for $H^{\rho,0}$ and let $\kappa_n^{free}$ denote the $n$th cumulant associated with $\hat{\psi}^{free}$. \\Then 
$\kappa^{free,\mathbb{T}^d}_n = \kappa^{free}_n \frac{d^{n-1}}{(d^{n-1}-1)(d+1)^{n-1}}$.
\end{enumerate}
\end{lemma}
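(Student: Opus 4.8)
The plan is to extract both parts from the product representation in Lemma \ref{pro: singleEdge vs Tree}, using the martingale convergence of Propositions \ref{pro: MartingaleFree} and \ref{pro: MartingaleHiddenIsing} to transfer from the pre-limit transforms $\hat{\varphi}_r^\nu$ to their limit. Write $\hat{\psi}_\bullet$ for $\hat{\psi}^{free}$ in the free case and for $\hat{\psi}^{per}_\star$ in the period-$2$ case, so that Lemma \ref{pro: singleEdge vs Tree} yields, for real $t$, the bound $\hat{\varphi}_r^\nu(t)\le P_r(t):=\prod_{l=0}^{r-1}\hat{\psi}_\bullet(t/((d+1)d^l))^{(d+1)d^l}$, with equality in the free case. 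First I would check that $P_r(t)$ converges as $r\to\infty$ for every real $t$ with $|t|<(d+1)R$. In both cases $\hat{\psi}_\bullet$ is even, equals $1$ at $0$, and is bounded below by $1$ by Jensen's inequality; in the free case it is moreover real-analytic on $(-R,R)$, while in the period-$2$ case $\log\hat{\psi}^{per}_\star$ is the maximum of the two even, real-analytic functions $s\mapsto\log\mathbb{E}[\exp(s\eta_{(v,w)})\mid\bar{\sigma}_w-\bar{\sigma}_v=\bar{a}]$, each of which vanishes to second order at $0$ by \eqref{eq: plusminussymmetry} and \eqref{eq: 2perCentred}. Hence on a fixed compact interval $[-r_0,r_0]\subset(-R,R)$ one has $0\le\log\hat{\psi}_\bullet(s)\le Cs^2$, so the $l$-th summand of $\log P_r(t)$ is nonnegative and at most $Ct^2/((d+1)d^l)$, a summable majorant; thus $\log P_r(t)$ increases to a finite limit and $P_r(t)\to P_\infty(t)\in(0,\infty)$.

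To obtain statement (a), fix $t$ with $|t|<(d+1)R$ and choose $p>1$ with $p|t|<(d+1)R$. Applying the previous step at $pt$ (evenness of $\hat{\psi}_\bullet$ takes care of negative arguments) gives $\sup_r\mathbb{E}[\exp(ptH_r^{\rho,0})]=\sup_r\hat{\varphi}_r^\nu(pt)\le P_\infty(pt)<\infty$, so $\{\exp(tH_r^{\rho,0})\}_{r\in\N}$ is bounded in $\mathcal{L}^p$ and therefore uniformly integrable. By Propositions \ref{pro: MartingaleFree} and \ref{pro: MartingaleHiddenIsing} we have $H_r^{\rho,0}\to H^{\rho,0}$ $\nu$-a.s., hence $\exp(tH_r^{\rho,0})\to\exp(tH^{\rho,0})$ $\nu$-a.s., and uniform integrability upgrades this to $\mathcal{L}^1$-convergence, i.e.\ $\hat{\varphi}_r^\nu(t)=\mathbb{E}[\exp(tH_r^{\rho,0})]\to\mathbb{E}[\exp(tH^{\rho,0})]<\infty$. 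This is exactly the asserted convergence of the moment generating function $\hat{\varphi}_\infty^\nu$ of $H^{\rho,0}$ on $(-(d+1)R,(d+1)R)$ together with its identification as the pointwise limit of $(\hat{\varphi}_r^\nu)_{r\ge1}$; in the free case this limit is the explicit product $\prod_{l\ge0}\hat{\psi}^{free}(t/((d+1)d^l))^{(d+1)d^l}$.

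For statement (b) I would read off Taylor coefficients at $0$ from $\log\hat{\varphi}_\infty^{free}(t)=\sum_{l\ge0}g_l(t)$, where $g_l(t):=(d+1)d^l\log\hat{\psi}^{free}(t/((d+1)d^l))$ is real-analytic near $0$ with $|g_l(t)|\le Ct^2/((d+1)d^l)$ there. By the Cauchy estimates on a small disk, $|g_l^{(n)}(0)|\le C\,n!\,\varrho^{2-n}/((d+1)d^l)$, which is summable in $l$; hence $\sum_l g_l$ is analytic near $0$ and may be differentiated term by term. Using $\kappa_1^{free}=\mathbb{E}[\eta_b]=0$ and $(\log\hat{\psi}^{free})^{(n)}(0)=\kappa_n^{free}$ one finds $g_l^{(n)}(0)=(d+1)^{1-n}d^{l(1-n)}\kappa_n^{free}$, and summing the geometric series over $l\ge0$ (which converges for $n\ge2$ since $d\ge2$) yields $\kappa_n^{free,\mathbb{T}^d}=(d+1)^{1-n}\kappa_n^{free}/(1-d^{1-n})=\kappa_n^{free}\,d^{n-1}/\big((d^{n-1}-1)(d+1)^{n-1}\big)$; for $n=1$ both cumulants vanish, the stated formula being understood for $n\ge2$. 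The steps that require genuine care are the uniform-integrability argument in (a) and the justification of termwise differentiation in (b); the rest reduces to the elementary bound $\log\hat{\psi}_\bullet(s)=O(s^2)$ near $0$ and the summation of a geometric series.
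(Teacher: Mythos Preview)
Your proof is correct and, in fact, handles the passage to the limit more carefully than the paper does. Both arguments start from the product representation of Lemma~\ref{pro: singleEdge vs Tree}, but diverge from there.

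The paper substitutes the full cumulant expansion $\log\hat{\psi}^{free}(s)=\sum_{n\ge 1}\kappa_n^{free}\,s^n/n!$ into the product, interchanges the sums over $l$ and $n$, and then reads off both the domain of convergence and the cumulant relation~(b) from the resulting series $\sum_n \kappa_n^{free}\,s^n\big/\big(n!\,(d+1)^{n-1}(1-d^{-(n-1)})\big)$. This is economical---(a) and (b) drop out together---but it implicitly assumes that the cumulant generating function of the single-edge increment has radius of convergence equal to $R$, which need not hold (the MGF can be entire while its logarithm has complex zeros, so the cumulant series may have strictly smaller radius).

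Your route sidesteps this issue by treating (a) and (b) separately. For (a) you only use the crude bound $0\le\log\hat{\psi}_\bullet(s)\le Cs^2$ on compacts of $(-R,R)$, which requires nothing beyond $C^2$-smoothness and vanishing first moment; this already shows $P_r(t)\to P_\infty(t)<\infty$ on $(-(d+1)R,(d+1)R)$. The uniform integrability step via $L^p$-boundedness at a slightly larger argument $pt$ is then the clean way to upgrade a.s.\ convergence of $H_r^{\rho,0}$ to convergence of the MGFs, and it works identically in the free and height-period~$2$ cases. For (b) you work locally near $0$, where analyticity is not in doubt, and justify termwise differentiation of $\sum_l g_l$ by a Cauchy estimate; this yields the cumulant formula without ever invoking convergence of the single-edge cumulant series on the whole of $(-R,R)$.

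In short: the paper's argument is quicker but leans on an unstated hypothesis about the cumulant series; your argument is slightly longer but more robust, and the uniform integrability device makes the identification $\hat{\varphi}_\infty^\nu=\lim_r\hat{\varphi}_r^\nu$ fully rigorous.
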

\begin{proof}
(a) First consider the case of the \textbf{free measure}. 
By definition of the cumulants $(\kappa_n^{free})_{n \in \N}$ of the increment along an edge we have
\[\log \psi^{free}(s) = \sum_{n=1}^\infty \kappa_n^{free} \frac{s^n}{n!}, \quad s \in (-R,+R).\]
Now Lemma \ref{pro: singleEdge vs Tree} gives for every $s \in (-(d+1)R,+(d+1)R)$:
\begin{equation}\label{eq: Cumulants}
\begin{split}
 &\limsup_{r \rightarrow \infty}\log \hat{\varphi}^{free}_r(s)\cr & = \limsup_{r \rightarrow \infty}\sum_{l=0}^{r-1}(d+1)d^l\log \psi^{free} \left(\frac{s}{(d+1)d^l} \right)=\limsup_{r \rightarrow \infty}\sum_{l=0}^{r-1}(d+1)d^l\sum_{n=1}^{\infty}\kappa^{free}_n \frac{s^n}{n!}\left(\frac{1}{(d+1)d^l} \right)^n \cr
 &=\limsup_{r \rightarrow \infty}\sum_{n=1}^\infty \kappa^{free}_n \frac{s^n}{n!}(d+1)^{-(n-1)}\sum_{l=0}^{r-1} d^{-(n-1)l}=\limsup_{r \rightarrow \infty}\sum_{n=1}^\infty \kappa^{free}_n \frac{s^n}{n!}\frac{1-d^{-(n-1)r}}{(d+1)^{n-1}(1-d^{-(n-1)})}\cr
 &=\sum_{n=1}^\infty \kappa^{free}_n \frac{s^n}{n!}\frac{1}{(d+1)^{n-1}(1-d^{-(n-1)})}.
\end{split}
\end{equation}
Here, interchanging the limsup w.r.t $r$ and the infinite summation w.r.t $n$ in the last equation is possible by dominated convergence. By the standard formula for the radius of convergence, the last expression in \eqref{eq: Cumulants} converges absolutely on the interval $(-(d+1)R,+(d+1)R)$. This also justifies the interchange of infinite summations in \eqref{eq: Cumulants}. By dominated convergence the moment generating function $\hat{\varphi}_\infty^\nu$ thus exists on $(-(d+1)R,+(d+1)R)$  and is given as the pointwise limit of $(\hat{\varphi}_r^\nu)_{r \geq 1}$, which proves (a) for the case of the free state. As a direct implication of  
\eqref{eq: Cumulants} we also obtain Statement (b).\\

\textbf{2-height periodic GGMs:} To adapt this proof to the case of 2-height periodic GGMs, define for any edge $(v,w) \in \vec{E}$ and real $t$ the expressions $ \hat{\psi}^{\bar{\mu}}_{\bar{0}}:=\bar{\nu}^{\bar{\mu}}(\exp (t \eta_{(v,w)}) \mid \bar{\sigma}_w-\bar{\sigma}_v=\bar{0})$ and $\hat{\psi}^{\bar{\mu}}_{\bar{1}}:=\bar{\nu}^{\bar{\mu}}(\exp (t \eta_{(v,w)}) \mid \bar{\sigma}_w-\bar{\sigma}_v=\bar{1})$, so $\hat{\psi}_\star(t)=\max_{\bar{a} \in \Z_2}\hat{\psi}_{\bar{a}}(t)$. Further, for $\bar{a} \in \Z_2$ let $\kappa_{n,\bar{a}}^{\bar{\mu}}$ denote the $n$th cumulant of the distribution of $\eta_{(v,w)}$ given $\bar{\sigma}_w-\bar{\sigma}_v=\bar{a}$. 
Hence, \[\log \hat{\psi}^{\bar{\mu}}_\star(t) \leq \vert \log \hat{\psi}^{\bar{\mu}}_{\bar{0}}(t)\vert+ \vert \log \hat{\psi}^{\bar{\mu}}_{\bar{1}}(t)\vert 
\leq \sum_{n=1}^{\infty}(\vert \kappa_{n,\bar{0}}^{\bar{\mu}}\vert+\vert \kappa_{n,\bar{1}}^{\bar{\mu}}\vert)\frac{\vert t \vert^n}{n!} \]
Then, by Lemma \ref{pro: singleEdge vs Tree} we have: 
\begin{equation}
\begin{split}
 &\limsup_{r \rightarrow \infty}\log \hat{\varphi}^{\bar{\mu}}_r(t) \leq \limsup_{r \rightarrow \infty}\sum_{l=0}^{r-1}(d+1)d^l\log \hat{\psi}_\star^{\bar{\mu}} \left(\frac{t}{(d+1)d^l} \right)\cr 
 &=\limsup_{r \rightarrow \infty}\sum_{l=0}^{r-1}(d+1)d^l\sum_{n=1}^{\infty}(\vert \kappa_{n,\bar{0}}^{\bar{\mu}}\vert+\vert\kappa_{n,\bar{1}}^{\bar{\mu}}\vert)\frac{\vert t \vert^n}{n!}\cr 
 &=\sum_{n=1}^\infty (\vert \kappa_{n,\bar{0}}^{\bar{\mu}}\vert+\vert\kappa_{n,\bar{1}}^{\bar{\mu}}\vert) \frac{\vert t \vert^n}{n!}\frac{1}{(d+1)^{n-1}(1-d^{-(n-1)})}
 \cr 
 &=\sum_{n=1}^\infty \vert \kappa_{n,\bar{0}}^{\bar{\mu}}\vert \frac{\vert t \vert^n}{n!}\frac{1}{(d+1)^{n-1}(1-d^{-(n-1)})}+\sum_{m=1}^\infty \vert\kappa_{m,\bar{1}}^{\bar{\mu}}\vert \frac{\vert t \vert^m}{m!}\frac{1}{(d+1)^{m-1}(1-d^{-(m-1)})}.
\end{split}
\end{equation}
From the assumption that $\hat{\psi}_\star(t)$ is finite for $t \in (-R,+R)$ we thus conclude similarly to the case of the free state that the pointwise limit $\hat{\varphi}^{\bar{\mu}}_r(t)$ exists as a finite number for all $t \in (-(d+1)R,(d+1)R)$. This concludes the proof of Statement (a) for the case of 2-height periodic GGMs.
\end{proof}
Finally, the proof of Proposition \ref{thm: UB singlesitemarginals} is just a combination of Lemma \ref{pro: RangeOfConvergence} and the (exponential) Markov inequality.
\begin{proof}[Proof of Proposition \ref{thm: UB singlesitemarginals}]
Let $0<A<(d+1)R$. Then we have for every $s \geq 0$
\begin{equation*}
\begin{split}
\nu(H^{\rho,0} \geq s)=\nu\left(\exp(AH^{\rho,0}) \geq \exp(As) \right) \leq \hat{\varphi}^{\nu}_\infty(A) \exp(-As),   
\end{split}    
\end{equation*}
where we employed Markov's inequality and the fact that by Lemma \ref{pro: RangeOfConvergence}, the number $A$ lies within the domain of convergence of $\hat{\varphi}^{\nu}_\infty$.
Similarly, $\nu(H^{\rho,0} \leq -s) \leq \hat{\varphi}^{\nu}_\infty(A) \exp(-As)$ as both for the free measure and GGM of height-period $2$, the distribution of $H^{\rho,0}$ is even.

Finally, the inequality $\mu^H(\vert \sigma_\rho \vert 
 \geq s+1) \leq \nu(\vert H^{\rho,0}) \vert \geq s )$ follows directly from the single-site marginals representation \eqref{eq: SingSiteMarginals}, which finishes the proof of Proposition \ref{thm: UB singlesitemarginals}.
 \end{proof}
 \subsection{The underlying HOV $H$ possesses an infinitely often differentiable Lebesgue-density}
 As a Corollary of Proposition \ref{pro: singleEdge vs Tree} applied to the characteristic functions we obtain the following result.
\begin{pro}[Exponential bound on limiting characteristic function]\label{pro: LebesgueCharacteristic}
Let $\nu$ either denote the free measure or a GGM of height-period $2$ associated with any fuzzy chain $\bar{\mu}$ and let  $\varphi_\infty^\nu$ denote the respective characteristic function of $H^{\rho,0}$. Then the following is true:
\begin{enumerate}[a)]
    \item There is a constant $0< c<\infty$ such that 
\[ \vert \varphi^\nu_\infty(t) \vert \leq \exp\left(-c \vert t \vert \right) \text{ for all } t \in \R. \]
\item By reverse Fourier-transform, $H^{\rho,0}$ thus has a Lebesgue density $f_{H^{\rho,0}}$ which is infinitely often differentiable.
\end{enumerate}
\end{pro}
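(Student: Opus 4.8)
The plan is to read (a) off from Lemma~\ref{pro: singleEdge vs Tree} evaluated along the imaginary axis, and then to obtain (b) by a standard Fourier-inversion argument. Throughout, write $\psi_\star(u)$ for the modulus of the relevant single-edge characteristic function, i.e.\ $\psi_\star(u)=\big|\hat\psi^{free}(iu)\big|$ in the free case and $\psi_\star(u)=\hat\psi^{per}_\star(iu)$ in the height-period-$2$ case. By Propositions~\ref{pro: MartingaleFree} and~\ref{pro: MartingaleHiddenIsing} we have $H_r^{\rho,0}\to H^{\rho,0}$ in distribution, so $\varphi^\nu_\infty(s)=\lim_{r\to\infty}\mathbb{E}\big[e^{isH_r^{\rho,0}}\big]$. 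Substituting $t=is$ in Lemma~\ref{pro: singleEdge vs Tree} and letting $r\to\infty$ (the partial products are non-increasing, being finite products of factors in $[0,1]$) gives
\[
\big|\varphi^\nu_\infty(s)\big|\ \le\ \prod_{l=0}^{\infty}\psi_\star\!\Big(\frac{s}{(d+1)d^{l}}\Big)^{(d+1)d^{l}},\qquad s\in\mathbb{R},
\]
with equality in the free case.

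The analytic heart of (a) is a single-edge estimate: there is a $c_1>0$ with $\psi_\star(u)\le e^{-c_1u^2}$ for $|u|\le 1$, while trivially $\psi_\star(u)\le 1$ for all $u$. For the first bound, Assumption~\ref{ass: SquareIntegrability} together with $Q>0$ ensures that the single-edge law — and, in the period-$2$ case, each of the two conditional laws (supported on the even resp.\ the odd integers) — is centred with strictly positive variance $\sigma^2$, so $\psi_\star(u)=1-\tfrac{\sigma^2}{2}u^2+o(u^2)$ near $0$ and hence $\psi_\star(u)\le 1-\tfrac{\sigma^2}{4}u^2$ on some $|u|\le\delta$; on $\delta\le|u|\le 1$ one uses that the relevant lattice laws are aperiodic — their supports contain $0$ and $1$ (resp.\ generate $2\mathbb{Z}$), because $Q>0$ — so that the modulus is $<1$ there and, by compactness, $\sup_{\delta\le|u|\le1}\psi_\star(u)=1-\varepsilon<1$. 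Thus $\psi_\star(u)\le 1-c_1u^2\le e^{-c_1u^2}$ on $[-1,1]$ with $c_1:=\min(\tfrac{\sigma^2}{4},\varepsilon)$, using $1-x\le e^{-x}$. Inserting this: for $s\in\mathbb{R}$ let $N=N(s)\ge 0$ be minimal with $|s|\le(d+1)d^{N}$, so every argument $s/((d+1)d^{l})$ with $l\ge N$ lies in $[-1,1]$, whence
\[
\big|\varphi^\nu_\infty(s)\big|\ \le\ \prod_{l\ge N}\exp\!\Big(-c_1\,\frac{s^2}{(d+1)^2d^{2l}}\,(d+1)d^{l}\Big)\ =\ \exp\!\Big(-\frac{c_1 d}{(d+1)(d-1)}\,s^2\,d^{-N(s)}\Big).
\]
For $|s|>d+1$ one has $N(s)\ge 1$ and $(d+1)d^{N(s)-1}<|s|$, hence $s^2 d^{-N(s)}>\tfrac{d+1}{d}|s|$ and $\big|\varphi^\nu_\infty(s)\big|\le e^{-c|s|}$ with $c:=c_1/(d-1)$; for $|s|\le d+1$ one uses $\big|\varphi^\nu_\infty\big|\le 1\le e^{c(d+1)}e^{-c|s|}$. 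This establishes the exponential bound of (a), up to a harmless multiplicative constant on the bounded range (which plays no role below).

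For (b): by (a), $t\mapsto t^k\varphi^\nu_\infty(t)$ lies in $L^1(\mathbb{R})$ for every $k\in\mathbb{N}_0$; in particular $\varphi^\nu_\infty\in L^1(\mathbb{R})$, so $H^{\rho,0}$ has the bounded continuous Lebesgue density $f_{H^{\rho,0}}(x)=\frac{1}{2\pi}\int_{\mathbb{R}}e^{-ixt}\varphi^\nu_\infty(t)\,dt$. Since each $t^k\varphi^\nu_\infty(t)$ is integrable, dominated convergence permits differentiating under the integral sign any number of times, giving $f_{H^{\rho,0}}^{(k)}(x)=\frac{1}{2\pi}\int_{\mathbb{R}}(-it)^k e^{-ixt}\varphi^\nu_\infty(t)\,dt$, again bounded and continuous; hence $f_{H^{\rho,0}}\in C^\infty(\mathbb{R})$.

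I expect the main obstacle to be the single-edge estimate on $[-1,1]$, and specifically — in the height-period-$2$ case — the verification that $\hat\psi^{per}_\star(i\,\cdot\,)$, being a maximum of two characteristic functions supported on the even and on the odd integers (each with its own non-trivial period), is nonetheless quadratically flat at the origin and uniformly bounded below $1$ on $\delta\le|u|\le1$. Granting that, the remaining step — converting the infinite product over geometrically shrinking arguments into a bound with exponent linear in $|s|$, via the cut-off scale $N(s)\sim\log_d|s|$ at which the arguments enter $[-1,1]$ — is elementary bookkeeping, and part (b) is then routine.
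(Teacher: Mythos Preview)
Your argument is correct and follows the same route as the paper: pass to characteristic functions in Lemma~\ref{pro: singleEdge vs Tree}, use a quadratic upper bound on $\psi_\star$ near the origin, introduce a geometric cut-off scale beyond which all arguments are small, and sum the resulting geometric tail to turn the quadratic exponent into a linear one; part (b) is then standard Fourier inversion plus dominated convergence. The paper is slightly more economical in one respect: it only uses the Taylor estimate $\log\psi_\star(u)\le -\bar c\,u^2$ on a short interval $[0,\delta]$ and defines the cut-off $L_\delta(t)$ so that all remaining arguments land there, so your additional aperiodicity/compactness step pushing the quadratic bound out to the full interval $[-1,1]$ is not needed (though it is correct, since in each of the three relevant single-edge laws the characteristic function has modulus~$1$ only at multiples of $\pi$). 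Conversely, your caveat about a multiplicative constant on the bounded range $|s|\le d+1$ is well taken: since $\varphi^\nu_\infty(t)=1-O(t^2)$ near $0$ while $e^{-c|t|}=1-c|t|+O(t^2)$, the literal inequality $|\varphi^\nu_\infty(t)|\le e^{-c|t|}$ cannot hold for all small $t$; the paper's derivation has the same issue at the step ``by definition of $L_\delta$, $\tfrac{|t|}{(d+1)d^{L_\delta(t)}}\ge\tfrac{\delta}{d}$'' when $L_\delta(t)$ is already at its minimum. As you note, this is harmless for (b), which only needs integrability of $t\mapsto |t|^k|\varphi^\nu_\infty(t)|$ at infinity.
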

\begin{proof}[Proof of Proposition \ref{pro: LebesgueCharacteristic}]
First assume that $\nu$ is the free state and denote by $\psi^{free}(s):=\hat{\psi}^{free}(is)$ and $\varphi^{free}_r(s):=\hat{\varphi}^{free}_r(is)$ the respective characteristic functions of a gradient spin variable and on the $r$-sphere.
To prove Statement (a), note that by a second-order Taylor expansion of $\psi^{free}$ (e.g., \cite[Thm. 15.32]{Kl20}) and vanishing firsts moments there are a constant $0<\bar{c}<\infty$ and $\delta=\delta(\bar{c})>0$ such that 
\begin{equation}
\log \psi^{free}(s) \leq -\bar{c} s^2 \text{ for every }s \in [0,\delta(\bar{c})]. \end{equation}
Consider the even function 
\[L_\delta: \R \rightarrow (0,\infty); \quad L_\delta(t):= \min \{l \in \N \mid \frac{\vert t \vert}{(d+1)d^l} \leq \delta \}. \]
Now, by Levy's continuity theorem (e.g., \cite[Thm. 15.24]{Kl20}) and Proposition \ref{pro: MartingaleFree} the family $(\varphi^{free}_r)_{r \in \N}$ converges pointwise (even more, uniformly on compact sets) to $\varphi^{free}_\infty$. Hence, from Lemma \ref{pro: singleEdge vs Tree} we obtain the following.
\begin{equation}
\begin{split}
\vert \varphi^{free}_\infty(t) \vert  &\leq \bigg | \prod_{l=1}^{L_\delta(t)}\psi^{free} \left(\frac{t}{(d+1)d^{l}} \right)^{(d+1)d^l} \bigg | \cr 
&\quad \cdot \, \bigg | \exp \left(\sum_{l=L_\delta(t)+1}^\infty \left((d+1)d^l \log\psi^{free} \left(\frac{t}{(d+1)d^{l}} \right)  \right) \right) \bigg |.
\end{split}
\end{equation}
As $\psi^{free}$ is a characteristic function, 
we can bound the absolute value of the first factor from above by $1$.
For the second factor we employ the defining properties of $\delta$ and $L_\delta$ to arrive at
\begin{equation}\label{eq: charExpUB}
\begin{split}
\vert \varphi^{free}_\infty(t) \vert &\leq \exp \left(-\bar{c}t^2\sum_{l=L_\delta(t)+1}^\infty (d+1)d^l \left( \frac{1}{(d+1)d^l}\right)^2\right) \cr 
&=\exp \left(-\frac{\bar{c}t^2}{(d+1)d^{L_\delta(t)+1}} \frac{d}{d-1} \right).
\end{split}
\end{equation}  
By definition of $L_\delta$, we have $\frac{\vert t \vert}{(d+1)d^{L_\delta(t)}} \geq \frac{\delta}{d}$. Hence, \eqref{eq: charExpUB} implies
\begin{equation}\label{eq: charExpUB2}
\vert \varphi^{free}_\infty(t) \vert \leq \exp \left(-\frac{\bar{c}\delta}{d(d-1)} \vert t  \vert \right).     
\end{equation}
In particular, the quantities $\bar{c}$ and $\delta(\bar{c})$ are positive and do not depend on $t$, which concludes the proof of Statement (a) for the free state setting $c:=\frac{\bar{c}\delta(\bar{c})}{d(d-1)}$.     
The respective result for any GGM of height-period $2$ now follows from replacing $\psi^{free}$ by $\psi^{\bar{\mu}}_\star$ defined by $\psi^{\bar{\mu}}_\star(s):=\hat{\psi}^{\bar{\mu}}_\star(is)$ (cf. \eqref{eq: UB2perMomentGenerating}) and noting that all the above steps remain true.

For the proof of Statement (b) let $\nu$ be either the free state or a GGM of height-period $2$. We note that Statement (a) implies that $\int_{-\infty}^{+\infty}\vert \varphi_\infty^\nu (t) \vert \, \text{d}t <\infty$.
Hence, from the Fourier-inversion formula (\cite[Chapter XV, Thm. 3]{Fe71} or \cite[Ex. 15.1.7]{Kl20}) we obtain that $H^{\rho,0}$ has the bounded continuous Lebesgue density
\begin{equation}
f_{H^{\rho,0}}(s)=\frac{1}{2\pi}\int_{-\infty}^{+\infty}\exp(-ist)\varphi_\infty^\nu(t) \, \text{d} t,    
\end{equation}
which by the exponential bound \eqref{eq: charExpUB2} and the differentiation lemma (e.g., \cite[Thm. 6.28]{Kl20}) is infinitely often differentiable.
\end{proof}
\begin{rk}[Extensions of Proposition \ref{pro: LebesgueCharacteristic}]\label{rk: ExtensionsPro5}
As for the results of Propositions \ref{pro: MartingaleFree} and \ref{pro: MartingaleHiddenIsing}, also the result of Proposition \ref{pro: LebesgueCharacteristic} remains true if we replace the Cayley tree of order $d$ by the respective regular $d$-ary tree. In the particular case of 2-height periodic GGMs the result further remains true if we fix 
% the fuzzy class $\bar{\omega}_\rho \in \{\bar{0},\bar{1}\}$ at the root of the tree.
any $\mod-2$ fuzzy spin configuration $\bar{\omega} \in \{\bar{0},\bar{1}\}^V$ and replace the distribution $\nu^{\bar{\mu}}$ of gradients by the conditional distribution $\bar{\nu}^{\bar{\mu}}(\cdot \mid \bar{\sigma}=\bar{\omega})$. This follows from the fact that $\psi^{\bar{\mu}}_\star$ is by definition an upper bound on the absolute value of the characteristic function conditional on the possible increments of the fuzzy chain. 
\end{rk}
\subsection{Pinning at infinity destroys spatial homogeneity}
Surprisingly, even pinning at infinity of a previously spatially homogeneous gradient Gibbs measure destroys spatial homogeneity already on the level of the single-site marginals.
We will provide a proof for the case of the free measure and afterwards outline its extension to the case of gradient Gibbs measures of height period $2$.
\begin{pro}\label{pro: InhomogeneousPinning}
Let $\nu$ be the free state (or a spatially homogeneous GGM of height-period $2$, resp.) and let $H$ be the height-offset variable which is given by Proposition \ref{pro: MartingaleFree} (or Proposition \ref{pro: MartingaleHiddenIsing}, resp.).

Then the Gibbs measure $\mu^H$ is not spatially homogeneous.
\end{pro}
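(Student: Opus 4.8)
The plan is to show that the single-site marginal at the root $\rho$ differs from the single-site marginal at some other vertex, say a neighbor $w$ of $\rho$. By the marginal formula \eqref{eq: SingSiteMarginals}, the law of $\sigma_\rho$ under $\mu^H$ is the law of $-\lfloor H(\sigma^{\rho,0})\rfloor$ under $\nu$, which by Theorem \ref{thm: Free measure}a) (resp. Proposition \ref{pro: MartingaleFree}) equals the law of $-\lfloor H^{\rho,0}\rfloor$, where $H^{\rho,0}=\lim_r H_r^{\rho,0}$. Likewise the law of $\sigma_w$ under $\mu^H$ is the law of $-\lfloor H(\sigma^{w,0})\rfloor$. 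The covariance property a) of Definition \ref{def: HeightOffset} gives $H(\sigma^{w,0}(\zeta)) = H(\sigma^{\rho,0}(\zeta)) - \eta_{(\rho,w)}(\zeta)$ since $\sigma^{w,0}=\sigma^{\rho,0} - \eta_{(\rho,w)}\cdot\mathbf{1}$ on the component sense; more precisely $\sigma^{w,0}(\zeta)=\sigma^{\rho,0}(\zeta)+ (0 - \sigma^{\rho,0}_w(\zeta))$ as a joint shift by $-\sigma^{\rho,0}_w(\zeta) = -\eta_{(\rho,w)}$. So I would first establish that the distribution of $H^{\rho,0}$ and that of $H^{\rho,0}-\eta_{(\rho,w)}$ (the latter being a spherical-average limit recentered at $w$) are genuinely different.

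The cleanest way to see this is via characteristic functions. By Lemma \ref{pro: singleEdge vs Tree} and Proposition \ref{pro: LebesgueCharacteristic}, $H^{\rho,0}$ has characteristic function $\varphi^{free}_\infty(t)=\prod_{l\ge 0}\psi^{free}(t/((d+1)d^l))^{(d+1)d^l}$, where $\psi^{free}$ is the characteristic function of a single increment. Recentering at a neighbor $w$ amounts to redrawing the spherical averages from the subtree rooted at $w$ together with the edge $(\rho,w)$ reversed; working out the analogue of the telescoping recursion, the characteristic function of $H^{w,0}$ seen from $\rho$ (i.e. of $H^{\rho,0}-\eta_{(\rho,w)}$) is $\psi^{free}\!\big(\tfrac{d}{d+1}t\big)\cdot\prod_{l\ge 1}\psi^{free}(t/((d+1)d^{l}))^{\cdots}$ — in any case a different infinite product in which the edge $(\rho,w)$ enters with a different weight. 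Comparing the two products near $t=0$, the leading corrections to $1$ are $-\tfrac12\kappa_2^{free}\,t^2\sum_l (d+1)d^l/((d+1)d^l)^2 = -\tfrac12\kappa_2^{free}\,t^2\tfrac{1}{(d+1)(1-1/d)}$ for $H^{\rho,0}$ versus a strictly larger coefficient for the recentered variable (the reversed edge carries weight $d/(d+1)$ rather than $1/(d+1)$), hence the two have different variances. Since $\kappa_2^{free}=\mathbb{E}[Y_1^2]>0$ by positivity of $Q$, the variances differ, so $\mathrm{Law}(H^{\rho,0})\ne\mathrm{Law}(H^{w,0})$.

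It remains to pass from distinct laws of the real-valued variables $H^{\rho,0}$ and $H^{w,0}$ to distinct laws of their integer parts $\lfloor H^{\rho,0}\rfloor$ and $\lfloor H^{w,0}\rfloor$, i.e. of $\sigma_\rho$ and $\sigma_w$ under $\mu^H$. This does not follow automatically — two different real laws can have the same integer-part distribution — so I would instead argue at the level of all shifts simultaneously, or exploit the density. By Proposition \ref{pro: LebesgueCharacteristic}b) both variables have smooth Lebesgue densities $f^\rho$ and $f^w$; if $\lfloor H^{\rho,0}\rfloor$ and $\lfloor H^{w,0}\rfloor$ had the same law, then $\sum_{k\in\Z} f^\rho(s+k)=\sum_{k\in\Z}f^w(s+k)$ would have equal integrals over each interval $[n,n+1)$, but this is a weak constraint. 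The robust fix: rather than comparing at $\rho$ and a neighbor, compare the marginal of $\sigma_\rho$ with the marginal of $\sigma_v$ for $v$ ranging over a whole sphere $W_r$, and use that the variance of $H^{v,0}$ (computed as above, now with an entire path of reversed edges of length $r$ contributing) grows with $d(\rho,v)$; since a distribution on $\Z$ is determined together with all its shifts and cannot have arbitrarily large second moment while matching a fixed one, some $v$ must yield a different $\mu^H$-marginal. Alternatively, and more simply, one uses that $\mu^H$ is a Gibbs measure (Theorem \ref{thm: DLR}) and invokes translation invariance to force \emph{all} single-site marginals equal, then derives a contradiction with the unbounded growth of $\mathrm{Var}(H^{v,0})$ along a ray — spatial homogeneity of $\mu^H$ would make $\sigma_\rho-\sigma_v$ have a law independent of $v$, yet $\mathrm{Var}(\sigma_\rho-\sigma_v)$ under $\mu^H$ grows linearly in $d(\rho,v)$ just as in the unpinned free state, which is incompatible with both marginals being equal and fixed.

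I expect the main obstacle to be precisely this last point: converting the (easy) statement that the real-valued recentered height-offset variables have different distributions into the statement that the \emph{integer} single-site marginals of $\mu^H$ differ. The floor function is the enemy. The way I would make it airtight is to avoid comparing a single pair of sites and instead track the second moment of the \emph{difference} $\sigma_\rho - \sigma_v = \sum_{b\in\Gamma(\rho,v)}\eta_b$ directly under $\mu^H$ — this is a gradient observable, so its $\mu^H$-law equals its $\nu$-law, which under the free state has variance $d(\rho,v)\,\mathbb{E}[Y_1^2]\to\infty$. If $\mu^H$ were translation invariant, $\sigma_\rho$ and $\sigma_v$ would be identically distributed with a common finite variance $\mathrm{Var}(\sigma_\rho)$ (finite because $\lfloor H^{\rho,0}\rfloor$ has all moments by Proposition \ref{thm: UB singlesitemarginals} under the extra assumption, or at least a second moment from $L^2$-convergence in general), giving $\mathrm{Var}(\sigma_\rho-\sigma_v)\le 4\,\mathrm{Var}(\sigma_\rho)<\infty$, a contradiction. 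For the height-period $2$ case the same argument runs verbatim with the free increments replaced by the hidden-Ising increments, using the uniform lower bound away from zero on $\mathbb{E}[\eta_b^2\mid\text{fuzzy}]$ and the fact that $\sigma_\rho-\sigma_v$ is still a centered sum whose variance grows at least linearly (the transverse fluctuations of the fuzzy chain do not cancel the longitudinal growth), which is where a small amount of care with the hidden-Ising covariance structure is needed.
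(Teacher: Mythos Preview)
Your final variance argument is correct and considerably simpler than the paper's route. After the exploratory first two paragraphs (which you rightly flag as incomplete because of the floor), the argument you settle on is clean: $\sigma_\rho-\sigma_v$ is a gradient observable, so its $\mu^H$-law coincides with its $\nu$-law; under the free state (resp.\ any period-$2$ GGM) this is a sum of i.i.d.\ (resp.\ conditionally independent, centered) increments whose variance grows linearly in $d(\rho,v)$; but translation invariance would force $\sigma_\rho\stackrel{d}{=}\sigma_v$ with a common finite second moment (since $H^{\rho,0}\in L^2(\nu)$ implies $\lfloor H^{\rho,0}\rfloor\in L^2(\nu)$), giving $\mathrm{Var}_{\mu^H}(\sigma_\rho-\sigma_v)\le 4\,\mathrm{Var}_{\mu^H}(\sigma_\rho)<\infty$ uniformly in $v$. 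Contradiction. For the period-$2$ case the only extra check is $\min_{\bar a\in\{\bar 0,\bar 1\}}\mathbb{E}[\eta_b^2\mid \bar\sigma_w-\bar\sigma_v=\bar a]>0$, which holds because $Q>0$ everywhere.

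The paper takes a different route: it compares the \emph{two}-site marginal $(\sigma_\rho,\sigma_v)$ for a neighbor $v$ with its image under the translation swapping $\rho$ and $v$. It splits the HOV along the edge $(\rho,v)$ as $H^{\rho,0}=\tfrac{d}{d+1}\overleftarrow{H}^{\rho,0}+\tfrac{1}{d+1}\overrightarrow{H}^{v,0}+\tfrac{1}{d+1}\eta_{(\rho,v)}$, sets $Z=\tfrac{d}{d+1}\overleftarrow{H}^{\rho,0}+\tfrac{1}{d+1}\overrightarrow{H}^{v,0}$ (independent of $\eta_{(\rho,v)}$ in the free case), and shows that the assumed invariance forces $\nu(\lfloor Z\rfloor+t=-b)=\nu(\lfloor Z\rfloor+t=-bd)$ for all $b$, contradicting summability of a probability mass function.

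What each approach buys: yours is shorter, uses only the $L^2$-bound on the martingale limit, and needs neither the past/future decomposition nor any regularity of the density; it also extends to period~$2$ with essentially no change. The paper's past/future decomposition, however, is not dead weight---the same splitting is reused verbatim later to prove that $\mu^H$ is not a tree-indexed Markov chain (Proposition~\ref{pro: noMarkov}) and, in a refined form, to establish the support statement for the density (Proposition~\ref{pro: SupportH}).

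One small imprecision: translation invariance alone does not make the law of $\sigma_\rho-\sigma_v$ independent of $v$ (only of the translation orbit of the pair). But you do not actually use that claim; the argument only needs $\sigma_\rho\stackrel{d}{=}\sigma_v$ and the trivial bound $\mathrm{Var}(X-Y)\le 2\mathrm{Var}(X)+2\mathrm{Var}(Y)$, so this is harmless.
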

\begin{proof}[Proof of Proposition \ref{pro: InhomogeneousPinning}]
Let $\mathbb{T}^d_n=(W_n,E_{W_n})$ denote the Cayley tree with root $\rho$ of depth $n$ and let $v \in \partial\{\rho\}$ be arbitrary. We will show that the joint distribution of the variables $\sigma_\rho$ and $\sigma_v$ is not invariant under the left translation with $v$,  \[l_v: V \rightarrow V, \quad u \mapsto vu,\] which maps $\rho$ to $v\rho=v$ and $v$ to $vv=\rho$.

Our proof strategy will be to 
exhibit a formula for their joint two-spin distribution, 
in which HOVs corresponding to regular $d$-ary trees will appear. 
While we only have partial knowledge about the distributional limits of 
those HOVs, 
this will in all cases 
be good enough for us to show that the resulting semi-explicit 
formula \eqref{eq: LimitOfDecomposition} can not possibly be invariant under the exchange of the 
vertices $v$ and $\rho$. 

For the particular choice of $v$ let $\{v_1,v_2,\ldots,v_d\}=\partial \{\rho \} \setminus \{v\}$. 
The choice of $v$ induces a decomposition of $\mathbb{T}^d_n$ into two subtrees $\overleftarrow{\mathbb{T}}^d_n$ and $\overrightarrow{\mathbb{T}}^d_n$. Here, $\overleftarrow{\mathbb{T}}^d_n$ is the regular $d$-ary tree of depth $n$ with root $\rho$ which lies in the past of the oriented edge $(\rho,v)$ (see Eq. \eqref{eq: the past}), while $\overrightarrow{\mathbb{T}}^d_n$ is the regular $d$-ary tree of depth $n-1$ rooted at $v$ which lies in the future of $(\rho,v)$. 
Now, recall the notation from Definition \ref{def: SphericalAverage} on spherical averages and write
\begin{equation}\label{eq: d-ary decomposition}
\begin{split}
H_r^{\rho,0}(\eta)=\frac{1}{\vert W_r \vert}\left(\sum_{u \in W_r \cap \overleftarrow{\mathbb{T}}^d_r}\sigma_u^{\rho,0}(\eta)+\sum_{u \in W_r \cap \overrightarrow{\mathbb{T}}^d_r}\sigma_u^{v,0}(\eta) \right)+\frac{\vert  W_r \cap \overrightarrow{\mathbb{T}}^d_r \vert}{\vert W_r \vert}\eta_{(\rho,v)}.
\end{split}
\end{equation}
In the next step notice that the statements of Propositions \ref{pro: MartingaleFree} and \ref{pro: MartingaleHiddenIsing} on almost sure convergence of  $(H_r^{\rho,0})_{r \geq 1}$ remain true if the underlying Cayley tree is replaced by the regular $d$-ary tree.
In particular, the $\nu$-almost sure limits
\begin{equation}\label{eq: LimitOfDecomposition1}
\overleftarrow{H}^{\rho,0}:=\lim_{r \rightarrow \infty}\frac{1}{\vert W_r \cap \overleftarrow{\mathbb{T}}^d_r \vert} \sum_{u \in W_r \cap \overleftarrow{\mathbb{T}}^d_r}\sigma_u^{\rho,0} \quad \text{and} \quad \overrightarrow{H}^{v,0}:=\lim_{r \rightarrow \infty}\frac{1}{\vert  W_r \cap \overrightarrow{\mathbb{T}}^d_r \vert} \sum_{u \in W_r \cap \overrightarrow{\mathbb{T}}^d_r}\sigma_u^{v,0} 
\end{equation}
exist.
See Figure \ref{Fig: SpliitingIntoSubtrees} for an illustration.
\begin{figure}[ht]
    \centering
    \includegraphics[width=0.5\linewidth]{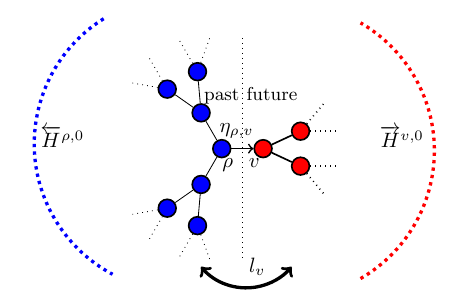}
    \caption{The situation on the binary tree $\mathbb{T}^2$. The choice of $v$ induces a splitting of $\mathbb{T}^d $ into the  blue regular $d$-ary subtree rooted at $\rho$ and the red regular $d$-ary subtree of rooted at $v$. The tail-measurable random variables 
$\ola{H}^{\rho,0}$ and $\ora{H}^{v,0}$ 
are the limits of spherical averages in the past (the future, resp.). Finally, the map $l_v$ interchanges the roles of the past and the future.}
\label{Fig: SpliitingIntoSubtrees}    
\end{figure}
Hence, taking the $\nu$-a.s.-limit $r \rightarrow \infty$ on both sides of the equation \eqref{eq: d-ary decomposition} leads to 
\begin{equation}\label{eq: LimitOfDecomposition}
H^{\rho,0}=\frac{d}{d+1}\overleftarrow{H}^{\rho,0}+\frac{1}{d+1}\overrightarrow{H}^{v,0}+\frac{1}{d+1}{\eta_{(\rho,v)}}
\end{equation}
Now consider the joint distribution of $(\sigma_\rho,\sigma_v)$ and of $(\sigma_v,\sigma_\rho)$. We have for any $s,t \in \Z$
\begin{equation}
\begin{split}
\mu^H(\sigma_\rho=t,\ \sigma_v=s)&=\mu^H \left(\sigma_\rho=t,\ \eta_{(\rho,v)}=s-t \right)\\ 
\mu^H(\sigma_v=t,\ \sigma_\rho=s)&=\mu^H \left(\sigma_v=t,\ \eta_{(\rho,v)}=-(s-t) \right). 
\end{split}
\end{equation}
From the expressions \eqref{eq: pinned measure muh} and \eqref{eq: GGMtoGM} describing the joint marginals of $\sigma_\rho$ and $\sigma_v$ under the pinned Gibbs measure $\mu^H$, together with the decomposition \eqref{eq: LimitOfDecomposition} of $H^{\rho,0}$ we obtain
\begin{equation}
\begin{split}
\mu^H(\sigma_\rho=t,\ \sigma_v=s)&=\nu\left(\lfloor \frac{d}{d+1}\overleftarrow{H}^{\rho,0}+\frac{1}{d+1}\overrightarrow{H}^{v,0}+\frac{s-t}{d+1} \rfloor=-t, \ \eta_{(\rho,v)}=s-t \right)\\ 
\mu^H(\sigma_v=t,\ \sigma_\rho=s)&=\nu\left(t-s-\lfloor \frac{d}{d+1}\overleftarrow{H}^{(\rho,0)}+\frac{1}{d+1}\overrightarrow{H}^{(v,0)}+\frac{t-s}{d+1} \rfloor=t, \ \eta_{(\rho,v)}=t-s \right).    
\end{split}    
\end{equation}
The claim will now follow 
if we compare these two formulas for general values 
of $s,t$ with $s-t\in (d+1)\Z$, to be detailed now.  

Set $Z:=\frac{d}{d+1}\overleftarrow{H}^{\rho,0}+\frac{1}{d+1}\overrightarrow{H}^{v,0}$ and for fixed $t,b \in \Z$ let $s:=s(b):=t+b(d+1)$. Then we have 
\begin{equation}
\begin{split}
\mu^H(\sigma_\rho=s,\ \sigma_v=t)&=\nu\left(\lfloor Z+b \rfloor=-t,\ \eta_{(\rho,v)}=b(d+1) \right)\\ 
\mu^H(\sigma_v=s,\ \sigma_\rho=t)&=\nu\left(-b(d+1)-\lfloor Z-b \rfloor=t, \ \eta_{(\rho,v)}=-b(d+1 )\right),     
\end{split}    
\end{equation}
which implies
\begin{equation}\label{eq: spatialHomPeriodicity2}
\begin{split}
\mu^H(\sigma_\rho=s,\ \sigma_v=t)&=\nu\left(\lfloor Z \rfloor=-t-b,\ \eta_{(\rho,v)}=b(d+1) \right)\\ 
\mu^H(\sigma_v=s,\ \sigma_\rho=t)&=\nu\left(\lfloor Z \rfloor=-t-bd, \ \eta_{(\rho,v)}=-b(d+1 )\right).    
\end{split}    
\end{equation}
\textbf{Free state:}
First consider the case of $\nu$ being the free state of i.i.d.-increments, where the random variable $Z$ is independent of $\eta_{(\rho,v)}$, because of prescribing the height $0$ at $\rho$ (at $v$, resp.) in \eqref{eq: LimitOfDecomposition1}. 
Now we will lead the assumption that $\mu^H(\sigma_\rho=s,\ \sigma_v=t)$ coincides with $\mu^H(\sigma_\rho=t,\ \sigma_v=s)$ for all $s,t \in \Z$ to a contradiction. \\Using independence of $Z$ and $\eta_{(\rho,v)}$ and symmetry of the distribution of $\eta_{(\rho,v)}$ the system \eqref{eq: spatialHomPeriodicity2} implies that 
\begin{equation}
 \nu\left(\lfloor Z \rfloor+t=-b \right)=\nu\left(\lfloor Z \rfloor+t=-bd\right)  \quad \text{for all } b \in \Z.  
\end{equation}
In particular, the sequence $(\nu\left(\lfloor Z \rfloor+t=-bd^n \right))_{d \in \N}$ is a constant, 
the only possibility being the constant 
$0$. As this has to hold for all $t$,
this contradicts the fact that the probability mass function of the discrete random variable 
$\lfloor Z \rfloor$ must sum to $1$. 
Hence, $\mu^H$ cannot be invariant under the translations of the tree.\medskip

\textbf{2-height periodic GGMs:}
In case of 2-height periodic GGMs we may further condition \eqref{eq: spatialHomPeriodicity2} on the $\mod-2$ spin values at $\rho$ and $v$ to obtain conditional independence between $Z$ and $\eta_{(\rho,v)}$ and afterwards integrate out the outer conditioning. More precisely, assume that $d$ is even (the case of $d$ odd is completely analogous by restricting to odd values of $a$) and restrict to even values of $b$. Then $b(d+1)$ is an even number, so in \eqref{eq: spatialHomPeriodicity2} the internal fuzzy chain must coincide at $\rho$ and $v$ to have an increment in class $\bar{0}$. Hence, assuming that $\mu^H(\sigma_\rho=s,\ \sigma_v=t)$ coincides with $\mu^H(\sigma_\rho=t,\ \sigma_v=s)$ for all $s,t \in \Z$ we obtain from \eqref{eq: spatialHomPeriodicity2} by  taking the marginals of the joint measure $\bar{\nu}^{\bar{\mu}}$ on the fuzzy spins and the integer-valued gradients
\begin{equation*}
\begin{split}
&\bar{\nu}^{\bar{\mu}}\left(\lfloor Z \rfloor=-t-b,\ \eta_{(\rho,v)}=b(d+1) \mid \bar{\sigma}_\rho=\bar{\sigma}_v=\bar{0}\right)\bar{\mu}(\bar{\sigma}_\rho=\bar{\sigma}_v=\bar{0}) \cr 
&\quad +\bar{\nu}^{\bar{\mu}}\left(\lfloor Z \rfloor=-t-b,\ \eta_{(\rho,v)}=b(d+1) \mid \bar{\sigma}_\rho=\bar{\sigma}_v=\bar{1}\right)\bar{\mu}(\bar{\sigma}_\rho=\bar{\sigma}_v=\bar{1}) \cr
&=\bar{\nu}^{\bar{\mu}}\left(\lfloor Z \rfloor=-t-bd,\ \eta_{(\rho,v)}=-b(d+1) \mid \bar{\sigma}_\rho=\bar{\sigma}_v=\bar{0}\right)\bar{\mu}(\bar{\sigma}_\rho=\bar{\sigma}_v=\bar{0}) \cr 
&\quad +\bar{\nu}^{\bar{\mu}}\left(\lfloor Z \rfloor=-t-bd,\ \eta_{(\rho,v)}=-b(d+1) \mid \bar{\sigma}_\rho=\bar{\sigma}_v=\bar{1}\right)\bar{\mu}(\bar{\sigma}_\rho=\bar{\sigma}_v=\bar{1}) \quad \text{ for all } b \in 2\Z.
\end{split}
\end{equation*}
Now, conditional independence of $Z$ and $\eta_{(\rho,v)}$, symmetry of the (conditional) distribution of $\eta_{(\rho,v)}$ and the fact that the conditional distribution of $\eta_{(\rho,v)}$ depends only on the height-increment of the fuzzy chain lead to 
\begin{equation*}
\begin{split}
&\bar{\nu}^{\bar{\mu}}\left(\lfloor Z \rfloor+t=-b \mid \bar{\sigma}_\rho=\bar{\sigma}_v=\bar{0}\right)\bar{\mu}(\bar{\sigma}_\rho=\bar{\sigma}_v=\bar{0}) \cr 
&\quad +\bar{\nu}^{\bar{\mu}}\left(\lfloor Z \rfloor+t=-b \mid \bar{\sigma}_\rho=\bar{\sigma}_v=\bar{1}\right)\bar{\mu}(\bar{\sigma}_\rho=\bar{\sigma}_v=\bar{1}) \cr
&=\bar{\nu}^{\bar{\mu}}\left(\lfloor Z \rfloor+t=-bd \mid \bar{\sigma}_\rho=\bar{\sigma}_v=\bar{0}\right)\bar{\mu}(\bar{\sigma}_\rho=\bar{\sigma}_v=\bar{0}) \cr 
&\quad +\bar{\nu}^{\bar{\mu}}\left(\lfloor Z \rfloor+t=-bd \mid \bar{\sigma}_\rho=\bar{\sigma}_v=\bar{1}\right)\bar{\mu}(\bar{\sigma}_\rho=\bar{\sigma}_v=\bar{1}) \quad \text{ for all } b \in 2\Z.
\end{split}    
\end{equation*}
As $d$ is even and $b$ runs through all even integers this again contradicts finiteness of the distribution of $\lfloor Z \rfloor$.

This concludes the proof of Proposition \ref{pro: InhomogeneousPinning}.
\end{proof}
\subsection{The distribution of $H$ is supported on $\R$.}
By extending the decomposition argument employed in the proof of Proposition \ref{pro: InhomogeneousPinning} above and combining it with the result of Proposition \ref{pro: LebesgueCharacteristic} on absolute continuity of the distribution of $H^{\rho,0}$ we prove the following result on the support of the distribution of $H^{\rho,0}$.
\begin{pro}\label{pro: SupportH}
Let $\nu$ be the free state (a spatially homogeneous GGM of height-period $2$ with a fuzzy Markov chain $\bar{\mu}$, resp.), $H$ be the height-offset variable which is given by Proposition \ref{pro: MartingaleFree} (Proposition \ref{pro: MartingaleHiddenIsing}, resp.) and $H^{\rho,0}$ the push-forward of $H$ to the gradient field under 
 pinning the height $0$ at $\rho$ as in Definition \ref{def: SphericalAverage}.
 
Then the smooth Lebesgue-density $f_{H^{\rho,0}}$ given by Proposition \ref{pro: LebesgueCharacteristic} is strictly positive on $\R$. 
\end{pro}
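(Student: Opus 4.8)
The plan is to extend the two-subtree decomposition behind \eqref{eq: d-ary decomposition}--\eqref{eq: LimitOfDecomposition} from the proof of Proposition \ref{pro: InhomogeneousPinning} from the single edge $(\rho,v)$ to the whole sphere $W_k$, where $k$ will be taken large at the end. For $r\ge k$ group $W_r$ according to the ancestor in $W_k$ and use $\sigma_v^{\rho,0}=\sigma_u^{\rho,0}+\sigma_v^{u,0}$ for $v$ in the subtree $\mathbb{T}_u$ rooted at $u\in W_k$. Letting $r\to\infty$ (and invoking Remark \ref{rk: ExtensionsPro12} to run Propositions \ref{pro: MartingaleFree} and \ref{pro: MartingaleHiddenIsing} on the $d$-ary subtrees $\mathbb{T}_u$), one obtains, $\nu$-a.s.,
\[
H^{\rho,0}=\frac{1}{N_k}\bigl(S_k+T_k\bigr),\qquad N_k:=(d+1)d^{k-1},
\]
where $S_k:=\sum_{u\in W_k}\sigma_u^{\rho,0}$ is $\Z$-valued and $\mathcal F^\nabla_{B_k}$-measurable and $T_k:=\sum_{u\in W_k}X_u$, with $X_u$ the height-offset limit of the subtree $\mathbb{T}_u$ pinned at $u$. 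Writing $S_k=\sum_b c_b\,\eta_b$ over the edges of $B_k$ oriented away from $\rho$, the coefficient $c_b$ equals the number of descendants of $b$ in $W_k$, so every leaf edge of $B_k$ has coefficient $1$.

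For the free state all $\eta_b$ are i.i.d., hence $S_k$ and $T_k$ are independent, the $X_u$ are i.i.d.\ copies of the $d$-ary height-offset variable, and $S_k$ has full support $\Z$ (fix all gradients in $B_k$ at $0$ except one leaf edge). By Proposition \ref{pro: LebesgueCharacteristic} together with Remark \ref{rk: ExtensionsPro5}, each $X_u$ has a bounded continuous Lebesgue density $g_X$, so $T_k$ has the bounded continuous density $g_k:=g_X^{\ast N_k}$ and $H^{\rho,0}$ has the (continuous) density
\[
f_{H^{\rho,0}}(x)=N_k\sum_{m\in\Z}\nu(S_k=m)\,g_k(N_k x-m).
\]
Since $\nu(S_k=m)>0$ for every $m$, the set $\{f_{H^{\rho,0}}>0\}$ is the preimage of $\{g_k>0\}+\Z$ under $x\mapsto N_kx$. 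Now $g_X$ is continuous and integrates to $1$, hence strictly positive on some interval of length $\ell>0$, and an elementary induction on convolutions shows $g_k=g_X^{\ast N_k}$ is strictly positive on an interval of length $N_k\ell$. Choosing $k$ with $N_k\ell\ge 1$ we get $\{g_k>0\}+\Z=\R$, so $f_{H^{\rho,0}}>0$ on all of $\R$; in particular $f_{H^{\rho,0}}$ is strictly positive on every lattice $a+\delta\Z$, which proves the statement for the free state (and a little more).

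For a height-periodic GGM $\nu^{\bar\mu}$ the independence just used survives only after conditioning on the fuzzy field, and this is where the work lies. Conditioning on $\bar\sigma|_{B_k}$, the Markov property of $\bar\mu$ makes the subtree limits $X_u$, $u\in W_k$, conditionally independent with conditional law depending on $\bar\sigma_u$ only, and makes $S_k$ conditionally independent of $T_k$; the conditional law of $S_k$ now lives on a full coset of $2\Z$ (vary the leaf-edge gradient within its parity class), and Remark \ref{rk: ExtensionsPro5} gives each $X_u\mid\bar\sigma_u$ a bounded continuous density strictly positive on an interval of length at least some $\ell_{\min}>0$ depending only on $Q$ and $d$ --- there are just the two cases $\bar\sigma_u\in\{\bar 0,\bar 1\}$, and the exponential bound on the characteristic function from Proposition \ref{pro: LebesgueCharacteristic} is uniform in the fuzzy configuration. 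Running the computation above conditionally, the conditional density of $H^{\rho,0}$ is a $\tfrac1{N_k}$-rescaled sum over a $2\Z$-coset of translates of a convolution density strictly positive on an interval of length $\ge N_k\ell_{\min}$; taking $k$ with $N_k\ell_{\min}\ge 2$ makes it strictly positive on $\R$, uniformly in $\bar\sigma|_{B_k}$, and integrating out $\bar\sigma|_{B_k}$ finishes the proof. The main obstacle is thus not any single estimate but the careful tracking of these conditional independences and of the uniformity of $\ell_{\min}$ and of the characteristic-function bound; once these are in hand, the parity constraint on $S_k$ merely replaces the relevant covolume $1$ by $2$ and is absorbed by enlarging $k$ by one step.
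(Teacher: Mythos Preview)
Your proof is correct and follows exactly the same decomposition as the paper's own proof: split $H^{\rho,0}$ at the sphere $W_k$ into the finite discrete part $\frac{1}{N_k}S_k$ and the average $\frac{1}{N_k}T_k$ of independent $d$-ary subtree height-offset variables, then use that the discrete part has full range in $\frac{1}{N_k}\Z$ (a coset of $\frac{2}{N_k}\Z$ in the period-$2$ case) while the continuous part has a smooth density positive on some interval. Your extra observation---that the convolution $g_X^{\ast N_k}$ is positive on an interval of length at least $N_k\ell$, so for $k$ large the positivity set plus the lattice covers all of $\R$---actually yields a slightly stronger and cleaner conclusion than the paper, which stops at positivity on $\frac{1}{|W_{r_1}|}\Z + I_{r_1}$ and infers the lattice statement from there.
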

\begin{proof}[Proof of Proposition \ref{pro: SupportH}]
Fix any radius $r_1$, and write for $r$ sufficiently large the splitting into independent parts of the following form
\begin{equation}\label{eq: SplitBall}
\begin{split}
&
\frac{1}{\vert W_{r} \vert} \sum_{v \in W_{r}} \sigma_v^{\rho,0}=\frac{1}{\vert W_{r} \vert}\sum_{u \in W_{r_1}} \sum_{v \in W_{r}(u)} 
(\sigma_u^{\rho,0}+ \sigma_v^{u,0}) \cr
\end{split}
\end{equation}
where we write $W_{r}(u)$ for  the part of $W_r$ which can be reached by non-intersecting 
paths starting from $u$ pointing away from the origin. So, this is the part of $W_r$ which can be seen from $u$. In particular, $W_r(u)$ arises as the boundary of a regular $d$-ary subtree of depth $r-r_1$ rooted at $u$.
The last term can be rewritten as 
\begin{equation}
\begin{split}
&\frac{1}{\vert W_{r_1} \vert}\sum_{u \in W_{r_1}} \sigma_u^{\rho,0} 
+ \frac{1}{\vert W_{r_1} \vert}\sum_{u \in W_{r_1}} \Bigl( \frac{1}{\vert W_{r}(u) \vert}\sum_{v \in W_{r}(u)} \sigma_v^{u,0}\Bigr). 
\cr
\end{split}
\end{equation}
\textbf{Free measure:} First restrict to the case of $\nu$ being the free state. Then the random variables \[\Bigl( \frac{1}{\vert W_{r}(u) \vert}\sum_{v \in W_{r}(u)} \sigma_v^{u,0}\Bigr)_{u \in W_r}\] are independent. 
% In case of the hidden Ising model we condition on $(\bar{\sigma}_u)_{u \in W_{r_1}}$ to obtain conditional independence and afterwards integrate out over $(\bar{\sigma}_u)_{u \in W_{r_1}}$.
Again, apply Proposition \ref{pro: MartingaleHiddenIsing} to the regular $d$-ary tree rooted at $u$ (cf. Remark \ref{rk: ExtensionsPro12}) to conclude that as $r$ tends to infinity for fixed $r_1$, each of the independent $u$-indexed random variables 
$\frac{1}{\vert W_{r}(u) \vert}\sum_{v \in W_{r}(u)} \sigma_v^{u,0}$ converges to a HOV on the regular $d$-ary tree, anchored at $u$ which we denote by $\overrightarrow{H}^{u,0}$. 
Hence we have the equation 
\begin{equation}\label{eq: DiscVsCont}
\begin{split}
&H^{\rho,0}=\frac{1}{\vert W_{r_1} \vert}\sum_{u \in W_{r_1}} \sigma_u^{\rho,0}  + \frac{1}{\vert W_{r_1} \vert} \sum_{u \in W_{r_1}}  \overrightarrow{H}^{u,0}.
\end{split}
\end{equation}

Each of the random variables $\overrightarrow{H}^{u,0}$ under the right sum
has a density which is strictly positive on some small interval. It hence can be assumed to be larger than a fixed 
positive $\epsilon_0$ on a symmetric set of 
the form $J:=(\delta+(-\epsilon_1,\epsilon_1))\cup 
(-\delta+(-\epsilon_1,\epsilon_1))$. This is clear by the infinite differentiability of densities of HOVs we proved (extended to the regular $d$-ary tree). Note that positivity on $J$ implies that the density of the corresponding 
sum $\sum_{u \in W_{r_1}}  \overrightarrow{H}^{u,0}$ is strictly positive on 
the $|W_{r_1}|$-fold Minkowski sum of the set $J$ with itself, i.e. on the 
set $\{\sum_{u\in W_{r_1}}x_u \mid x_u \in J \text{ for all } u\}$.
Now observe that by choosing $r_1$ large enough, the Minkowski 
sum closes (possible) gaps and we obtain that the normalized sum
$ \frac{1}{\vert W_{r_1} \vert} \sum_{u \in W_{r_1}}  \overrightarrow{H}^{u,0}$ has density which is strictly 
bigger than some $\epsilon_2(r_1)$ for all $x$ in the whole interval $(-\delta,\delta)$. 
Note that $\epsilon_2(r_1)$ stays strictly positive for all sufficiently large but finite $r_1$. 

 On the other hand, the first term in \eqref{eq: DiscVsCont} is a discrete random variable with support $\frac{\Z}{\vert W_{r_1} \vert}$, which follows from the fact that the gradient variables are independent under the free measure and have range $\Z$ by the assumption that $Q(i)>0$ for 
all $i\in \Z$. 
Combining these two aspects we conclude that $f_{H^{\rho,0}}$ is positive on the set
$\frac{1}{\vert W_{r_1} \vert}\Z+
(-\delta,\delta)
$, for $r_1$ large enough. 
Choosing now $r_1$ large but finite proves the claim that the density is positive on the whole of $\R$. 
\medskip

\textbf{2-height periodic GGMs:}
To adapt this strategy of proof to 2-height periodic GGMs we first fix some internal finite-volume spin configuration $\bar{\omega}_{B_{r_1}}$ and condition on $\bar{\sigma}_{B_{r_1}}=\bar{\omega}_{B_{r_1}}$ to obtain an independent family \[\Bigl( \frac{1}{\vert W_{r}(u) \vert}\sum_{v \in W_{r}(u)} \sigma_v^{u,0,\bar{\omega}_u}\Bigr)_{u \in W_r}.\] Here,  we denote by $\sigma_v^{u,0,\bar{\omega}_{B_{r_1}}}$  the random variable that maps a gradient configuration drawn from the conditional distribution $\bar{\nu}^{\bar{\mu}}(\cdot \mid \bar{\sigma}_{B_{r_1}}=\bar{\omega}_{B_{r_1}})$ to the absolute height at $v$ obtained from prescribing the height $0$ at $u$.
Then, we make use that the statement of Proposition \ref{pro: MartingaleHiddenIsing} remains true if we replace the Cayley tree rooted at $\rho$ by the regular $d$-ary tree rooted at the respective $u$ \textbf{and} condition on $\bar{\sigma}_{B_{r_1}}=\bar{\omega}_{B_{r_1}}$ (cf. Remarks \ref{rk: ExtensionsPro12})
to obtain existence of the a.s. limits
\[\overrightarrow{H}^{u,0,\bar{\omega}_{B_{r_1}}}:=\lim_{r \rightarrow \infty}\frac{1}{\vert W_{r}(u) \vert}\sum_{v \in W_{r}(u)} \sigma_v^{u,0,\bar{\omega}_{B_{r_1}}}, \quad u \in W_{r_1}.\]
Let $H^{\rho,0,\bar{\omega}_{B_{r_1}}}$ denote the random variable constructed in Proposition \ref{pro: MartingaleHiddenIsing} with $\nu(\cdot)$ replaced by 
$\bar{\nu}^{\bar{\mu}}(\cdot \mid \bar{\sigma}_{B_{r_1}}=\bar{\omega}_{B_{r_1}})$ to arrive at the following analogue of \eqref{eq: DiscVsCont}:
\begin{equation}\label{eq: DiscVsCont2}
\begin{split}
&H^{\rho,0,\bar{\omega}_{B_{r_1}}}=\frac{1}{\vert W_{r_1} \vert}\sum_{u \in W_{r_1}} \sigma_u^{\rho,0,\bar{\omega}_{B_{r_1}}} + \frac{1}{\vert W_{r_1} \vert} \sum_{u \in W_{r_1}}  \overrightarrow{H}^{u,0,\bar{\omega}_{B_{r_1}}}.
\end{split}
\end{equation}
By construction of the 2-height periodic GGM, the two terms in \eqref{eq: DiscVsCont2} are independent under $\bar{\nu}^{\bar{\mu}}(\cdot \mid \bar{\sigma}_{B_{r_1}}=\bar{\omega}_{B_{r_1}})$. Furthermore, by the extension of Proposition \ref{pro: LebesgueCharacteristic} in the sense of Remark \ref{rk: ExtensionsPro5}, the second term and the l.h.s of \eqref{eq: DiscVsCont2} have infinitely-often differentiable Lebesgue-densities. Each of the random variables $\sigma_u^{\rho,0,\bar{\omega}_{B_{r_1}}}$ is the sum of independent gradient variables whose ranges are either all even integers or all odd integers (depending on the respective conditioning along the edge). 
For the particular case of the all-zero conditioning $\bar{\omega}_{B_{r_1}}=\bar{0}_{B_{r_1}}$ the range of the random variable $\frac{1}{\vert W_{r_1} \vert}\sum_{u \in W_{r_1}} \sigma_u^{\rho,0,\bar{\omega}_{B_{r_1}}}$ includes the $r_1$-dependent lattice $\frac{2}{\vert W_{r_1} \vert }\Z$.  
Writing the density as a sum over conditional densities in the form  
\begin{equation*}
f_{H^{\rho,0}}(\cdot)=\sum_{\bar{\omega}_{B_{r_1}} \in \Z_2^{B_{r_1}}}f_{H^{\rho,0,\bar{\omega}_{B_{r_1}}}}(\cdot) \,\, \bar{\mu}(\bar{\sigma}_{B_{r_1}}=\bar{\omega}_{B_{r_1}})
\end{equation*}
we can now proceed as in the case of the free measure.
\end{proof}
\subsection{The pinned measures are no Markov chains and not extreme}
A further inspection of the decomposition arguments of the proof of Proposition \ref{pro: SupportH} reveals the following:
\begin{pro}\label{pro: noMarkov}
\begin{enumerate}[a)]
    \item Let $\nu$ be the free state 
and let $H$ be the height-offset variable which is given by Proposition \ref{pro: MartingaleFree}. 
%(Proposition \ref{pro: MartingaleHiddenIsing}, resp.)
Then the Gibbs measure $\mu^H$ is not a tree-indexed Markov chain. In particular, it is not extreme in the set of all Gibbs measures.
\item  Let $\nu$ be a gradient Gibbs measure of height-period $2$ for some fuzzy chain $\bar{\mu}$ 
and let $H$ be the height-offset variable which is given by Proposition \ref{pro: MartingaleHiddenIsing}. 
Then the Gibbs measure $\mu^H$ is not extreme.
\end{enumerate}
\end{pro}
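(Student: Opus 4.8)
The plan is to exploit the decomposition \eqref{eq: DiscVsCont} (and its $q=2$ analogue \eqref{eq: DiscVsCont2}) to exhibit a conditioning under which the measure $\mu^H$ splits as a nontrivial mixture, thereby contradicting extremality, and in the free case to violate the tree-indexed Markov property directly. For part (a), fix the edge $(\rho,v)$ with $v \in \partial\{\rho\}$, and recall from the proof of Proposition \ref{pro: InhomogeneousPinning} the past/future decomposition $H^{\rho,0}=\frac{d}{d+1}\overleftarrow{H}^{\rho,0}+\frac{1}{d+1}\overrightarrow{H}^{v,0}+\frac{1}{d+1}\eta_{(\rho,v)}$, where $\overleftarrow{H}^{\rho,0}$, $\overrightarrow{H}^{v,0}$ and $\eta_{(\rho,v)}$ are mutually independent under the free state. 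The localization center $\lfloor H^{\rho,0}\rfloor$ therefore depends on the past variable $\overleftarrow{H}^{\rho,0}$, which is $\mathcal{T}_{[-\infty,\rho v)}$-measurable. The key observation is that under $\mu^H$, conditioning on $\mathcal{F}_{[-\infty,\rho v)}$ (the absolute heights in the past of the edge) does \emph{not} screen off the influence of $\overleftarrow{H}^{\rho,0}$ on $\sigma_v$: because $\overleftarrow{H}^{\rho,0}$ is a tail function of the past, it is \emph{not} measurable with respect to $\mathcal{F}_{[-\infty,\rho v)}$ only up to the a.s.-determined shift — rather, conditioning on the past heights $(\sigma_w)_{w\in[-\infty,\rho v)}$ determines $\overleftarrow{H}^{\rho,0}$ exactly but \emph{also fixes the absolute offset}, so the conditional law of $\sigma_v$ given $\mathcal{F}_{[-\infty,\rho v)}$ differs from the conditional law given $\mathcal{F}_{\{\rho\}}$ alone. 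Concretely, I would compute $\mu^H(\sigma_v=t\mid \sigma_\rho=s,\ \overleftarrow{H}^{\rho,0}=h)$ using \eqref{eq: SingSiteMarginals} and the decomposition, and show it genuinely depends on $h$ (equivalently on further past data) and not only on $s$; the smoothness/full-support of $f_{\overrightarrow{H}^{v,0}}$ from Proposition \ref{pro: SupportH} guarantees the floor function $\lfloor\cdot\rfloor$ actually transports this dependence into a difference of probabilities. This contradicts $\mu^H(\sigma_v=\cdot\mid\mathcal{F}_{[-\infty,\rho v)})=\mu^H(\sigma_v=\cdot\mid\mathcal{F}_{\{\rho\}})$, so $\mu^H$ is not a tree-indexed Markov chain; non-extremality then follows since on trees every extreme Gibbs measure for a Markovian specification is a tree-indexed Markov chain (cf.\ \cite[Chapter 12]{Ge11}).

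For part (b), the Markov-chain argument is unavailable (indeed $\nu^{\bar\mu}$ itself is a hidden Markov model, not a Markov chain on the gradients), so I would instead argue non-extremality via a mixture representation. Using \eqref{eq: DiscVsCont2}, write $\mu^H = \sum_{\bar\omega_{B_{r_1}}} \nu(\bar\sigma_{B_{r_1}}=\bar\omega_{B_{r_1}})\,\mu^{H,\bar\omega_{B_{r_1}}}$, where $\mu^{H,\bar\omega_{B_{r_1}}} := \nu(\cdot\mid \bar\sigma_{B_{r_1}}=\bar\omega_{B_{r_1}})\circ (G_H)^{-1}$ arises by pinning at infinity the conditioned gradient measure with HOV $H^{\rho,0,\bar\omega_{B_{r_1}}}$. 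Each $\mu^{H,\bar\omega_{B_{r_1}}}$ is again a Gibbs measure for $\gamma$ (the conditioned gradient measure still satisfies the DLR equation for $\gamma^\nabla$ outside $B_{r_1}$, and one checks the pinning construction of Theorem \ref{thm: DLR} applies). It then suffices to show that at least two of these components are distinct — equivalently that $\bar\sigma_\rho$ is not $\mu^H$-a.s.\ constant, which is immediate since the Ising chain $\bar\mu$ is supported on both parities. The distinctness of the corresponding $\mu^{H,\bar\omega}$ can be read off from the single-site marginal \eqref{eq: SingSiteMarginals}: under the all-$\bar0$ conditioning in $B_{r_1}$, the discrete part of $H^{\rho,0,\bar\omega}$ lives on $\tfrac{2}{|W_{r_1}|}\mathbb Z$, so $\mu^{H,\bar 0}(\sigma_\rho\in 2\mathbb Z)$ is governed by a lattice of spacing $2$, whereas a mixed conditioning shifts this lattice; comparing $\mu^{H,\bar\omega}(\sigma_\rho=t)$ for the two conditionings and invoking the positivity of the smooth density $f_{\overrightarrow{H}^{u,0,\bar\omega}}$ (Proposition \ref{pro: LebesgueCharacteristic}, extended as in Remark \ref{rk: ExtensionsPro5}) shows the marginals differ.

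The main obstacle I anticipate is \textbf{(a)}: making rigorous the claim that conditioning on $\mathcal F_{[-\infty,\rho v)}$ really does leave a residual dependence of $\sigma_v$ that cannot be absorbed into $\sigma_\rho$. The subtlety is that $H^{\rho,0}$ is only defined up to the global shift, so one must work with the \emph{honest} height variables under $\mu^H$ and track how $\lfloor H\rfloor$ enters; the cleanest route is probably to fix two past configurations $\omega, \omega'$ on $[-\infty,\rho v)$ that agree at $\rho$ but for which $\overleftarrow{H}^{\rho,0}(\omega)$ and $\overleftarrow{H}^{\rho,0}(\omega')$ differ by a non-integer amount $\alpha\notin\tfrac{1}{d+1}\mathbb Z$ — such configurations exist with positive $\mu^H$-probability by the full-support statement in Proposition \ref{pro: SupportH} applied to the past subtree — and then show that the laws of $\eta_{(\rho,v)}$ differ in these two cases because the floor thresholds $\{x : \lfloor \tfrac{d}{d+1}(\overleftarrow H + \text{const}) + \tfrac{1}{d+1}\overrightarrow H + \tfrac{1}{d+1}\eta_{(\rho,v)}\rfloor = \text{fixed}\}$ shift by $\alpha$, changing the probability because $f_{\overrightarrow H^{v,0}}$ is continuous and not periodic. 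A secondary, more routine obstacle is verifying that the conditioned-then-pinned measures in part (b) are genuinely Gibbs for $\gamma$ and that the identity $\mu^H = \sum \nu(\bar\sigma_{B_{r_1}}=\cdot)\,\mu^{H,\cdot}$ holds as a convex decomposition in $\mathcal M_1(\Omega,\mathcal F)$ rather than merely at the level of finite-dimensional marginals; this should follow from monotone-class arguments together with the independence structure built into Definition \ref{def: 2perGGM}.
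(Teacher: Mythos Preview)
Your approach is essentially the same as the paper's: both use the past/future decomposition $H^{\rho,0}=\frac{d}{d+1}\overleftarrow{H}^{\rho,0}+\frac{1}{d+1}\overrightarrow{H}^{v,0}+\frac{1}{d+1}\eta_{(\rho,v)}$ and show that the conditional law $\mu^H(\sigma_v=\cdot\mid\sigma_\rho,\,X)$ with $X=\frac{d}{d+1}\overleftarrow{H}^{\rho,0}$ genuinely depends on $X$, which is $\mathcal F_{[-\infty,\rho v)}$-measurable. The paper's execution is somewhat cleaner than your configuration-comparison idea: it conditions on $\{X\in A\}$ for Borel sets $A=[h,h+\varepsilon]$, forms the ratio $\nu(\lfloor X+Y+\frac{t'-t}{d+1}\rfloor=t,\,X\in A)/\nu(\lfloor X+Y+\frac{t''-t}{d+1}\rfloor=t,\,X\in A)$ for $t'=t+d+1$, $t''=t$, applies the Lebesgue differentiation theorem as $\varepsilon\downarrow 0$, and obtains $\hat C(t)=\nu(\lfloor Y+h\rfloor=t-1)/\nu(\lfloor Y+h\rfloor=t)$ independent of $h\in a+\Z$, contradicting summability of the law of $\lfloor Y\rfloor$. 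Your plan of picking two past configurations with $\overleftarrow H$ differing by $\alpha\notin\frac{1}{d+1}\Z$ aims at the same contradiction but is harder to make precise; the ratio trick avoids having to control two conditionings simultaneously.

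\textbf{Part (b).} Here there is a genuine gap. Your mixture $\mu^H=\sum_{\bar\omega_{B_{r_1}}}\nu(\bar\sigma_{B_{r_1}}=\bar\omega_{B_{r_1}})\,\mu^{H,\bar\omega_{B_{r_1}}}$ does not obviously witness non-extremality in the set of Gibbs measures, because the components $\mu^{H,\bar\omega_{B_{r_1}}}$ are in general \emph{not} Gibbs for $\gamma$: conditioning on $\bar\sigma_{B_{r_1}}$ forces parity constraints on the increments inside $B_{r_1}$ (namely $\eta_{(x,y)}\in \bar\omega_y-\bar\omega_x+2\Z$), which destroys the DLR property for $\gamma^\nabla$ at volumes $\Lambda\subset B_{r_1}$. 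Satisfying DLR only outside $B_{r_1}$ is not enough, and Theorem~\ref{thm: DLR} then does not apply. Nor can you fall back on tail non-triviality via the fuzzy spin, since $\bar\sigma_\rho$ is not a function of the height configuration at all, and $\sigma_\rho\bmod 2$ is not tail-measurable.

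The paper's route is much shorter and bypasses all of this: it observes that $H$ itself is tail-measurable on $\Omega$, and that under $\mu^H$ the fractional part $H-\lfloor H\rfloor$ has the same law as $H^{\rho,0}-\lfloor H^{\rho,0}\rfloor$ under $\nu$. By Proposition~\ref{pro: LebesgueCharacteristic} the latter is absolutely continuous, hence non-degenerate, so the tail $\sigma$-algebra of $\mu^H$ is non-trivial and extremality fails by \cite[Proposition~7.9]{Ge11}. This ``height-offset spectrum'' argument (following Sheffield and Lammers--Toninelli) works verbatim for both the free state and the period-$2$ case, and requires only the existence of a Lebesgue density for $H^{\rho,0}$---no decomposition over fuzzy configurations is needed.
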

\begin{proof}[Proof of Proposition \ref{pro: noMarkov}]
First we prove Statement (a) regarding the free state.
The idea is to consider 
the two-point distribution of a spin 
at the root and at a neighbor $v$. Writing 
it in terms of ingredients coming from 
HOVs corresponding to the infinite past of the 
oriented edge from the root to $v$, 
and HOVs corresponding to the future, 
we will present a general argument showing 
that conditioning on the infinite past, 
necessarily has to have a non-trivial effect. That such an infinite memory remains 
is in particular incompatible with 
the tree-indexed Markov chain property. 
This is carried out as follows. 

Let $v \in \partial \{\rho \}$ and as in the proof of Proposition \ref{pro: InhomogeneousPinning} above consider the splitting
\begin{equation*}
H^{\rho,0}=\frac{d}{d+1}\overleftarrow{H}^{\rho,0}+\frac{1}{d+1}\overrightarrow{H}^{v,0}+\frac{1}{d+1}{\eta_{(\rho,v)}}
\end{equation*}
of the limit of averages over the $r$-spheres into the respective subgraphs of the past and of future (cf. Equation \eqref{eq: LimitOfDecomposition} and Figure \ref{Fig: SpliitingIntoSubtrees}).
Further abbreviate \[X:=\frac{d}{d+1}\overleftarrow{H}^{\rho,0}  \quad \text{and} \quad  Y:=\frac{1}{d+1}\overrightarrow{H}^{v,0}. \]
We will now draw the assumption that the associated $H$-pinned 
Gibbs measure $\mu^H$
is a Markov chain to a contradiction. 
More precisely, we will show that
\begin{equation}
\mu^H(\sigma_v=\cdot \mid \mathcal{F}_{(-\infty,\rho v]})= 
\mu^H(\sigma_v=\cdot \mid \sigma_{\rho}, \,X) \quad \mu^H-a.s.
\end{equation}necessarily depends in a non-trivial way on $X$.
In words, the distribution of the spin at $v$ recalls the part in the infinite past of the HOV $H$.
Note that the definition of a tree-indexed Markov chain involves a $\mu^H$-almost sure statement, hence we will take extra care to condition only on events with positive mass.
As we consider the case of $\nu$ being the free state of i.i.d.-increments, the gradient spin along the edge $(\rho,v)$ and the random variables $X$ and $Y$ are independent. 
%The argumentation then extends to the case of the hidden Ising model by conditioning on the $\mod$-2 increment.

Compute for any integers $t$ and $t'$ the conditional probability of $\sigma_v'=t'$ given $\sigma_\rho=t$ and $X \in A$, where $A$ is any Borel set with $\nu(X \in A)>0$.  
Then the single-site marginals representation \eqref{eq: SingSiteMarginals} and independence lead to \begin{equation}\label{eq: FactorizationOfDecomposition}
\begin{split}
&\mu^H(\sigma_v=t'\mid \sigma_{\rho}=t, X \in A)\cr
&=\frac{\nu\left(\, \lfloor {X + Y+ \frac{\eta_{(\rho,v)}}{d+1}}\rfloor=-t,\, 
\eta_{(\rho,v)}-\lfloor {X + Y + \frac{\eta_{(\rho,v)}}{d+1}}\rfloor=t'
 \, , X \in A\right)}{\nu\left(\, \lfloor {X + Y + \frac{\eta_{(\rho,v)}}{d+1}}\rfloor=-t  \, , X \in A\right)}\cr 
 &=\nu(\eta_{(\rho,v)}=t'-t) \cdot \frac{\nu\left( \,\lfloor {X + Y + \frac{t'-t}{d+1}}\rfloor=t, \ X \in A\right)}{\nu\left( \, \lfloor {X + Y + \frac{\eta_{(\rho,v)}}{d+1}}\rfloor=t, X \in A \right)}.
\end{split}
\end{equation}
Now assume that $\mu^H(\sigma_v=t'\mid \sigma_{\rho}=t, X \in A)$ would not depend on $A$. Then the second factor of the last expression in \eqref{eq: FactorizationOfDecomposition} would be some function $C(t,t')$ which does not depend on $A$. 
So, 
\begin{equation}\label{eq: tt'-constant}
\begin{split}
C(t,t')=\frac{\nu\left( \,\lfloor {X + Y + \frac{t'-t}{d+1}}\rfloor=t, \ X \in A\right)}{\nu\left( \, \lfloor {X + Y + \frac{\eta_{(\rho,v)}}{d+1}}\rfloor=t, X \in A \right)}
\quad \text{for all }t,t'\in \Z.
\end{split}    
\end{equation}
Now consider \eqref{eq: tt'-constant} for $t',t'' \in \Z$ and take the quotient of both expressions to obtain
\begin{equation}\label{eq: tt'-constant2}
\begin{split}
\tilde{C}(t,t',t''):=\frac{C(t,t')}{C(t,t'')}=\frac{\nu\left( \,\lfloor {X + Y + \frac{t'-t}{d+1}}\rfloor=t, \ X \in A\right)}{\nu\left( \,\lfloor {X + Y + \frac{t''-t}{d+1}}\rfloor=t, \ X \in A\right)}.    
\end{split}    
\end{equation}
By Proposition \ref{pro: SupportH} applied to the regular $d$-ary tree instead of the Cayley tree the Lebesgue-densities $f_{X}$ and $f_Y$ are strictly positive on $\R$. Take any $h \in \R$ and $\varepsilon>0$ specify to $A:=A_{h,\varepsilon}:=[h,h+\varepsilon]$ and further set $t':=t+d+1$ and $t'':=t$. Then employ the fact that $X$ and $Y$ have smooth Lebesgue densities $f_{X}$ and $f_{Y}$ to get from \eqref{eq: tt'-constant2} to
\begin{equation}\label{eq: tt'-constant3}
\begin{split}
 \hat{C}(t):=\tilde{C}(t,t+d+1,t)=\frac{\tfrac{1}{\varepsilon}\int_{A_{h,\varepsilon}} \lambda(dx)f_{X}(x)\int \lambda(dy)f_{Y}(y)1_{\{\lfloor x+y \rfloor=t-1 \}}}{\tfrac{1}{\varepsilon}\int_{A_{h,\varepsilon}} \lambda(dx)f_{X}(x)\int \lambda(dy)f_{Y}(y)1_{\{\lfloor x+y \rfloor=t \}}}.   
\end{split}    
\end{equation}
The assumption that $C(t,t')$ and hence $\hat{C}(t)$ do not depend on $A_{h,\varepsilon}$ now allows to consider the limit $\varepsilon \to 0$ on the right hand side, as we explain now.
By the continuity lemma (cf. \cite[Theorem 6.27]{Kl20}) the function \[A_{h,\varepsilon} \ni x \mapsto f_{X}(x)\int\lambda(dy) f_{Y}(y)1_{\{\lfloor x+y \rfloor=t-1 \}} \in \R\] is continuous and hence every $x \in A_{h,\varepsilon}$ is a Lebesgue point.
From the Lebesgue differentiation theorem we thus deduce \[\lim_{\varepsilon \rightarrow 0}\tfrac{1}{\varepsilon}\int_{A_{h,\varepsilon}}\lambda(dx)f_{X}(x)\int \lambda(dy)f_{Y}(y)1_{\{\lfloor x+y \rfloor=t-1 \}}=f_{X}(h)\int \lambda(dy)f_{Y}(y)1_{\{\lfloor h+y \rfloor=t-1 \}}.\]
As the density $f_{X}$ is positive on $\R$, we may apply the same reasoning to the denominator in \eqref{eq: tt'-constant3}. Thus we obtain
\begin{equation}\label{eq: tt'-constant4}
\hat{C}(t)=\frac{\nu(\lfloor Y+h \rfloor=t-1)}{\nu(\lfloor Y+h \rfloor=t)} \quad \text{ for all }h \in \R.    
\end{equation}
In particular, 
\begin{equation}
\hat{C}(t)=\frac{\nu(\lfloor Y-t \rfloor=l-1)}{\nu(\lfloor Y-t \rfloor=l)} \quad \text{ for all }l \in \Z.    
\end{equation}
This, is the desired contradiction to the fact that the probability mass function of the discrete random variable 
$\lfloor Y-t \rfloor$ must be summable.
Hence, we have proven that for the free state $\nu^{free}$ the pinned Gibbs measure $\mu^H$ cannot be a tree-indexed Markov chain.\medskip

Finally we prove statement (b), which says that a gradient Gibbs measure of height-period $2$ cannot be extreme in the set of Gibbs measures. This follows 
almost immediately from the non-degeneracy of the HOV we proved, 
when we recall the steps of its construction 
as a tail-measurable random variable and the construction of the HOV-pinned measure.
Following the framework of Sheffield (\cite[Lemma 8.4.2]{Sh05}) and its application to the graph-homomorphism model by Lammers and Toninelli \cite[Remark 2.7]{LaTo23} we define the height-offset spectrum of the HOV $H$ for a GGM $\nu$ as the distribution of $H$ w.r.t. to the Gibbs measure $\mu^H$.
By definition, $H$ is tail-measurable w.r.t. the absolute heights. Furthermore, by construction of $\mu^H$, the height-offset spectrum is described by the distribution of $H(\sigma^{x,s})- \lfloor H(\sigma^{x,s}) \rfloor$ under $\nu$ for any $x \in V$ and $s \in \Z$.
Now consider the set-up of Proposition  \ref{pro: MartingaleHiddenIsing} where $H$ is constructed as a limit of averages over spheres. Proposition \ref{pro: LebesgueCharacteristic} above guarantees that the distribution $H^{\rho,0}$ under $\nu$ is absolutely continuous w.r.t. Lebesgue measure.
In particular, the distribution of $H(\sigma^{x,s})- \lfloor H(\sigma^{x,s}) \rfloor$ is not a Dirac measure. Hence it provides
a non-trivial tail-measurable variable, and 
from the equivalence of extremality of Gibbs measures and their tail-triviality (\cite[Proposition 7.9]{Ge11}) we conclude that $\mu^H$ is not extreme in the set of Gibbs measures.
\end{proof}
\section{Applications}\label{sec: applications}
For the sake of illustration let us provide explicit expressions 
for the single-edge moment generating functions
in the cases of the following frequently studied examples of transfer operators describing the interactions of the model.
\subsection{SOS model}
The SOS model is described by the interaction $U(j)=\vert j \vert$, which corresponds to the transfer operator $Q$ defined by $Q_\beta^\text{SOS}(j)=\exp(-\beta \vert j \vert)$.
From this we obtain the single-gradient moment generating function
\begin{equation}\label{eq: LaplaceSOS}
 \hat{\psi}^{free}(s)=\frac{(\exp(\beta)-1)^2}{\exp(2\beta)-2\exp(\beta)\cosh(s)+1}, \quad s \in [0,\beta),   
\end{equation}
with a singularity at $s=\beta$.
Hence, all results of Theorems \ref{thm: Free measure} and \ref{thm: 2periodic} can be made explicit.  In particular we have the exponential concentration of the marginal at the root for the measure pinned at infinity, with any exponent smaller $\beta(d+1)$, which follows from the general concentration bounds of \hyperlink{res: localization free}{Theorem 2, b2)} and \hyperlink{res: localization 2per}{Theorem 3, b2)}.  
\subsection{Discrete Gaussian model}
The discrete Gaussian model is described by the interaction $U(j)= j^2 $, which corresponds to the transfer operator $Q$ defined by $Q_\beta^\text{Gau\ss}(j)=\exp(-\beta j^2)$.
From this we obtain the single-gradient Laplace-transform
\begin{equation}
 \hat{\psi}^{free}(s)=\exp\left(\frac{s^2}{2\beta} \right)\frac{\sum_{j \in \Z}\exp \left(-\frac{\beta}{2}(j-\frac{s}{\beta})^2 \right)}{\sum_{j \in \Z}\exp(-\frac{\beta}{2}j^2)}
\end{equation}
which gives us super-exponential concentration for the marginal. 
\section{Appendix}

\subsection{An example illustrating measurability properties w.r.t. boundary conditions}\label{App: measurability}

Following the suggestion of a referee we give explicit formulas for the gradient specification on a small volume, 
which in particular illustrates the relevant measurability properties 
w.r.t. to the boundary condition.

Let $d=1$, i.e. $V=\Z$. Take $\Lambda=\{0\}$, i.e., $\partial \Lambda =\{-1,+1\}$. 
To write out the corresponding gradient kernel it suffices to take $F$ to be a bounded gradient observable which depends 
only on the two gradient variables $\zeta_{(-1,0)},\zeta_{(0,1)}$. 
Then, expressing the gradient variables as differences of spin variables and summing over the free height variable $\omega_0\in \Z$ at the site $0$ we first obtain the explicit expression for the corresponding Gibbs kernel (see \eqref{eq: GibbsSpecification})
\begin{equation}
\begin{split}
&\gamma_{\{0\}}(
F(\omega_0-\omega_{-1},\omega_1-\omega_0)
 \mid \omega_{-1},\omega_{1}) \cr
&=\frac{\sum_{\omega_0\in \Z}
 F(\omega_{0}-\omega_{-1},\omega_{1}-\omega_0)
Q(\omega_{0}-\omega_{-1})Q(\omega_{1}-\omega_{0})
}{
\sum_{\omega_0\in \Z}
Q(\omega_{0}-\omega_{-1})Q(\omega_{1}-\omega_{0})}.
% F(\zeta_{(-1,0)},\zeta_{(0,1)})
% \mid \omega_{-1},\omega_{1}) \cr
% &=\frac{\sum_{\omega_0\in \Z}
% F(\omega_{0}-\omega_{-1},\omega_{1}-\omega_{0})
% Q(\omega_{0}-\omega_{-1})Q(\omega_{0}-\omega_{1})
% }{
% \sum_{\omega_0\in \Z}
% Q(\omega_{0}-\omega_{-1})Q(\omega_{0}-\omega_{1})
% }
\end{split}
\end{equation}

% Note, as a first step, we have fixed auxiliary choices of boundary conditions 
% of the full height variables $\omega_{-1},\omega_{1}\in \Z$. 

Now note that the r.h.s. of the last equation depends on the variables 
$\omega_{-1},\omega_{1}\in \Z$ only through their difference 
$\omega_{1}-\omega_{-1}$, which can be seen by switching from the 
summation variable $\omega_{0}$ to $t=\omega_{0}-\omega_{1}$. 
% Making this in the notation explicit, we have
% \begin{equation}
% \begin{split}
% \gamma_{\{0\}}\big(
% F(\omega_0-\omega_{-1},\omega_0-\omega_1)
%  \mid \omega_{-1},\omega_{1} \big)
%  % &=\gamma_{\{0\}}\big(
% % F(\omega_0-\omega_{-1},\omega_0-\omega_1)
%  % \mid \omega_{1}-\omega_{-1}\big)\cr
%   &=\gamma_{\{0\}}\big(
%  F(\omega_0-\omega_{-1},\omega_0-\omega_1)
%   \mid [\nabla \omega]_{\{-1,1\}}\big).
% % &\gamma_{\{0\}(
% % F(\zeta_{(-1,0)},\zeta_{(0,1)})
% % \mid \omega_{-1},\omega_{1})
% % =\gamma_{\{-1,0,1\}}^\nabla(
% % F(\zeta_{(-1,0)},\zeta_{(0,1)})
% % \mid \omega_{1}-\omega_{-1})
% \end{split}
% \end{equation}
Turning now to the gradient kernel and noting that $\omega_1-\omega_{-1}=\zeta_{(-1,0)}+\zeta_{(0,1)}$ determines the equivalence class $[\zeta]_{\{-1,1\}}$ we thus have verified that
\begin{equation}
\begin{split}
\gamma^\nabla_{\{0\}}\big(F(\zeta_{(-1,0)},\zeta_{(0,1)}) \mid \zeta \big) &= \gamma_{\{0\}}^\nabla\big(F(\zeta_{(-1,0)},\zeta_{(0,1)}) \mid [\zeta]_{\{-1,1\}}\big)\cr
&=\gamma_{\{0\}}\big(
F(\omega_0-\omega_{-1},\omega_0-\omega_1)
 \mid \omega_{-1},\omega_{1} \big),    
\end{split}    
\end{equation}
for any $\omega \in \Z^\Z$ such that $\omega_1-\omega_{-1}=\zeta_{(-1,0)}+\zeta_{(0,1)}$,
as demanded in \eqref{grad}. 
%\newpage 
\subsection{Proof of Theorem \ref{thm: DLR}}
\begin{proof}[Proof of Theorem \ref{thm: DLR}]
Let $\emptyset \neq \Lambda \subset \Delta \Subset V$ and $\tilde{\omega} \in \Omega$ such that $\mu^H(\sigma_{\Delta \setminus \Lambda}=\tilde{\omega}_{\Delta \setminus \Lambda}) \neq 0$.  Further choose any $x \in \partial \Lambda$. Then, using Definition \ref{def: HeightOffset} and going over to gradients on the complement of $\partial \Lambda$ in the next step we have $\nu$-a.s.
\begin{equation}\label{eq: Anchoring At the Tail}
\begin{split}
&\mu^H(\sigma_{\Lambda \cup \partial \Lambda}=\tilde{\omega}_{\Lambda \cup \partial \Lambda} \mid \sigma_{\Delta \setminus \Lambda}=\tilde{\omega}_{\Delta \setminus \Lambda}) \cr&=\nu((\sigma^{x,0})_{\Lambda \cup \partial \Lambda} = \tilde{\omega}_{\Lambda \cup \partial \Lambda}+\lfloor H(\sigma^{x,0}) \rfloor \mid (\sigma^{x,0})_{\Delta \setminus \Lambda} = \tilde{\omega}_{\Delta \setminus \Lambda}+\lfloor H(\sigma^{x,0}) \rfloor)\cr
&=\nu(\eta_{\Lambda \cup \partial \Lambda} =(\nabla \tilde{\omega})_{\Lambda \cup \partial \Lambda}, \, (\sigma^{x,0})_{\partial \Lambda} = \tilde{\omega}_{\partial \Lambda}+\lfloor H(\sigma^{x,0}) \rfloor \cr 
&\qquad \mid \eta_{\Delta \setminus \Lambda} =(\nabla \tilde{\omega})_{\Delta \setminus \Lambda}, \, (\sigma^{x,0})_{\partial \Lambda} = \tilde{\omega}_{\partial \Lambda}+\lfloor H(\sigma^{x,0}) \rfloor) \cr
&=\nu(\eta_{\Lambda \cup \partial \Lambda} =(\nabla \tilde{\omega})_{\Lambda \cup \partial \Lambda} \mid \eta_{\Delta \setminus \Lambda} =(\nabla \tilde{\omega})_{\Delta \setminus \Lambda}, \, (\sigma^{x,0})_{\partial \Lambda}-\lfloor H(\sigma^{x,0}) \rfloor = \tilde{\omega}_{\partial \Lambda}).
\end{split}
\end{equation}
In the last line we used the elementary relation that the new probability of $A \cap B$ conditional on $B \cap C$ is the probability of $A$ conditional on $B \cap C$.\\
Note that the conditioning contains information on the gradient field outside $\Lambda$ as well as information on the relative heights at the boundary $\partial \Lambda$ together with the pinning prescribed by $H$. 
Hence, it is possible to equivalently describe the conditioning in terms of pinning at the point $x$ and the relative heights at the boundary.
More precisely, we have
\begin{equation}
\{(\sigma^{x,0})_{\partial \Lambda}-\lfloor H(\sigma^{x,0}) \rfloor = \tilde{\omega}_{\partial \Lambda}  \} = \{\lfloor H(\sigma^{x,0}) \rfloor  =-\tilde{\omega}_x  \} \cap \{[\eta]_{\partial \Lambda} = [\nabla \tilde{\omega}]_{\partial \Lambda} \} 
\end{equation}
By tail-measurability (with respect to the heights) of $H$ the event $\{\lfloor H(\sigma^{x,0}) \rfloor  =-\tilde{\omega}_x  \}$ is measurable with respect to $\mathcal{T}^\nabla_\Lambda$. Indeed, pinning the height $0$ at $x$ together with the relative heights at $\partial \Lambda$ and the gradient field outside $\Lambda$ provides all information on $(\sigma^{x,0})_{\Lambda^c}$.
Now employ the fact that the conditional distribution of the gradient field inside $\Lambda \cup \partial \Lambda$ under $\nu$ given $\mathcal{T}_\Lambda^\nabla$ is already measurable with respect to the relative heights at the boundary to conclude 
\begin{equation}
\begin{split}
&\nu(\eta_{\Lambda \cup \partial \Lambda} =(\nabla \tilde{\omega})_{\Lambda \cup \partial \Lambda} \mid \eta_{\Delta \setminus \Lambda} =(\nabla \tilde{\omega})_{\Delta \setminus \Lambda}, \, (\sigma^{x,0})_{\partial \Lambda}-\lfloor H(\sigma^{x,0}) \rfloor = \tilde{\omega}_{\partial \Lambda}) \cr
&=\nu(\eta_{\Lambda \cup \partial \Lambda} =(\nabla \tilde{\omega})_{\Lambda \cup \partial \Lambda} \mid [\eta]_{\partial \Lambda} = [\nabla \tilde{\omega}]_{\partial \Lambda})
\end{split}
\end{equation}
Taking into account the DLR-equation for the gradient Gibbs measure $\nu$ hence allows to rewrite the last expression of \eqref{eq: Anchoring At the Tail} as
\begin{equation*}
\begin{split}
&\nu(\eta_{\Lambda \cup \partial \Lambda} =(\nabla \tilde{\omega})_{\Lambda \cup \partial \Lambda} \mid \eta_{\Delta \setminus \Lambda} =(\nabla \tilde{\omega})_{\Delta \setminus \Lambda}, \, (\sigma^{x,0})_{\partial \Lambda}-\lfloor H(\sigma^{x,0}) \rfloor = \tilde{\omega}_{\partial \Lambda}) \cr
&= \gamma^\nabla(\eta_{\Lambda \cup \partial \Lambda} =(\nabla \tilde{\omega})_{\Lambda \cup \partial \Lambda} \mid \eta =\nabla \tilde{\omega}, \, [\eta]_{\partial \Lambda} = [\nabla \tilde{\omega}]_{\partial \Lambda})\cr
&=\gamma(\sigma_\Lambda= \tilde{\omega}_\Lambda \mid \tilde{\omega}),
\end{split}
\end{equation*}
which in combination with \ref{eq: Anchoring At the Tail} shows that $\mu$ satisfies the DLR-equation. This finishes the proof of Theorem \ref{thm: DLR}.
\end{proof}
\section*{Declarations}
\textbf{Data Availability:} Data sharing is not applicable to this article. The plots in Figures \ref{fig: Recursion} and \ref{Fig: SpliitingIntoSubtrees} have been generated within a separate .tex-file using the TikZ-package.

\textbf{Competing interests:} The authors have no competing interests to declare that are relevant to the content of this article.
\printbibliography

\end{document}